\theoremstyle{plain}
\newtheorem{theorem}{Theorem}[section]
\newtheorem{corollary}[theorem]{Corollary}
\newtheorem{definition}[theorem]{Definition}
\newtheorem{example}[theorem]{Example}
\newtheorem{lemma}[theorem]{Lemma}
\newtheorem{proposition}[theorem]{Proposition}
\newtheorem{remark}[theorem]{Remark}
\numberwithin{equation}  {section}
\begin{document}

\title{Similarity degree of a class of C$^*$-algebras}

\author{Wenhua Qian}
\address{Wenhua Qian \\
	Research Center for Operator Algebras, Department of Mathematics \\
	East China Normal University(Minhang Campus) \\
	Minhang District, Shanghai 200241, China; \ Email: whqian86@163.com}
\author{Junhao Shen}
\address{Junhao Shen \\
        Demartment of Mathematics and Statistics \\
         University of New Hampshire\\
         Durham, NH 03824, US; \  Email:  Junhao.Shen@unh.edu}

\begin{abstract}
Suppose that $\mathcal M$ is a countably decomposable type II$_1$ von Neumann algebra and $\mathcal A$ is a separable, non-nuclear, unital C$^*$-algebra. We show  that, if $\mathcal M$ has Property $\Gamma$, then
the similarity degree of $\mathcal M$ is less than or equal to $5$. If   $\mathcal A$ has Property c$^*$-$\Gamma$, then the similarity degree of $\mathcal A$ is   equal to $3$. In particular, the similarity degree of a $\mathcal Z$-stable, separable, non-nuclear, unital C$^*$-algebra is  equal to $3$.
\end{abstract}

\subjclass[2000]{Primary     46L10; Secondary 46L05}
\keywords{Property $\Gamma$, Similarity problem, Similarity degree}


\maketitle

\section{Introduction}
Kadison's Similarity Problem for a  C$^*$-algebra $\mathcal A$ in
\cite{Ka1} asks whether every
    bounded  representation $\rho$ of   $\mathcal A$ on a Hilbert
space $  H$ is similar to a $*$-representation. i.e. whether
there exists an invertible operator $T$ in $B(H)$, such
that $T\rho(\cdot)T^{-1}$ is a $*$-representation of $\mathcal A$.

 In
\cite {C3}, Christensen proved that every irreducible bounded
representation of a C$^*$-algebra on a Hilbert space is similar to a
$*$-representation. He also showed that
 every
  non-degenerate  bounded  representation of a nuclear C$^*$-algebra
is similar to a $*$-representation (also see  \cite{B1}).

In \cite{H1}, Haagerup showed that every
     cyclic (or finitely cyclic) bounded representation of a C$^*$-algebra   on a Hilbert
space  is similar to a $*$-representation. From this, he concluded
that, if $\mathcal A$ is a C$^*$-algebra that has no tracial states,
then every
  non-degenerate  bounded representation of $\mathcal A$ is similar to
a $*$-representation. It was  also shown in \cite{H1} that a
non-degenerate bounded  representation $\rho$ of a C$^*$-algebra
$\mathcal A$ on a Hilbert space $ H$ is similar to a
$*$-representation if and only if $\rho$ is completely bounded (also
see \cite{Had}, \cite{W}).

From Haagerup's results, it follows that  Kadison's
Similarity Problem   remains only open for C$^*$-algebras with
tracial states. Since a type II$_1$ von Neumann algebra always has
tracial states, it is natural to consider Kadison's Similarity
Problem for von Neumann algebras of type II$_1$.  In \cite {C4},
Christensen showed that Kadison's Similarity Problem for type II$_1$
factors with Property $\Gamma$ has an affirmative answer.

 In order to study Kadison's Similarity Problem,
Pisier in \cite{Pi2} introduced a  powerful new concept, similarity
degree of a unital C$^*$-algebra, as follows. {\em Suppose
$\mathcal{A}$ is a unital C$^*$-algebra. It has finite simiarity
degree if there is $\alpha >0$ such that for some constant $k$
(depending on $\alpha$) we have,  for  every bounded unital
representation  $\phi$ of $\mathcal{A} $ on a Hilbert space
$  H$,
$$\Vert \phi \Vert _{cb}\leq k \Vert \phi \Vert ^{\alpha}.$$
The infimum of the   numbers $\alpha$ (if exists) for which this
holds is defined to be the similarity degree of $\mathcal{A}$. We
denote it by $d(\mathcal{A})$. If there is no such pair
$(\alpha,k)$, we define $d(\mathcal{A})=\infty$.} It was shown in
\cite{Pi2} that Kadison's Similarity Problem   for a unital
C$^*$-algebra $\mathcal{A}$ has an affirmative answer if and only if
$d(\mathcal{A})<\infty$.

The following is a list of some recent results on the similarity
degrees of infinite dimensional unital C$^*$-algebras. (We have no intention to make the list complete.)
\begin{enumerate}
\item [(i)] $d(\mathcal{A})=2$ if and only if $\mathcal{A}$ is nuclear. (\cite{B1}, \cite{C1},
\cite{Pi3})

\item [(ii)]  If $\mathcal{A}=B(H)$ for some Hilbert space $H$, then
$d(\mathcal{A})=3$.
(\cite{H1}, \cite{Pi4})

\item [(iii)]  If $\mathcal{A}$ is a type II$_{1}$ factor with Property $\Gamma$,   $d(\mathcal{A}) \le
5$, (\cite{Pi4}).  This result was later improved in \cite{C2} to
$d(\mathcal{A}) = 3$.

\item [(iv)] If $\mathcal{A}$ is a minimal tensor product of two C*-algebras, one of which is nuclear and contains matrices of any
order, then $d(\mathcal{A}) \le 5$. (\cite{Po})

\item [(v)]  If $\mathcal{A}$ is $\mathcal{Z}$-stable, then $d(\mathcal{A}) \leq
5$. (\cite{JW})

\item [(vi)] If   every II$_{1}$ factor $*$-representation of a separable
C*-algebra $\mathcal{A}$ has Property $\Gamma$, then $d(\mathcal{A})
\leq 11$. (\cite{HS})
\end{enumerate}

In the paper, we are interested in Kadison's Similarity Problem for
type $II_1$ von Neumann algebras with Property $\Gamma$. Recall the
definition of Property $\Gamma$ for a type II$_{1}$ von Neumann
algebra from \cite{QS}. {\em Suppose $\mathcal{M}$ is a type
II$_{1}$ von Neumann algebra with a predual $\mathcal M_{\sharp}$.
Suppose   $\sigma (\mathcal M, \mathcal M_\sharp)$ is the weak-$*$
topology on $\mathcal M$ induced from $\mathcal M_\sharp$. We say
that $\mathcal{M}$ has Property $\Gamma$ if and only if $\forall \
a_{1}, a_{2}, \dots, a_{k} \in \mathcal{M}$ and $\forall \ n\in \Bbb
N$, there exist a partially ordered set $\Lambda$ and a family of projections $$\{ p_{i \lambda}: 1\le i\le n;
\lambda \in \Lambda \}\subseteq \mathcal{M}$$ satisfying
\begin{enumerate}
\item [(i)] For each $\lambda \in \Lambda$,   $p_{1 \lambda}, p_{2 \lambda}, \dots,
p_{n \lambda}$ are mutually orthogonal equivalent projections in $\mathcal
M$ with sum $I$.
\item [(ii)] For each $1\le i\le n$ and $1\le j\le k$,
$$
\lim_{ \lambda } (p_{i \lambda}a_{j}-a_{j}p_{i
 \lambda})^*(p_{i \lambda}a_{j}-a_{j}p_{i \lambda}) =0 \qquad \text {in $\sigma(\mathcal
M, \mathcal M_\sharp)$ topology.}$$
\end{enumerate}}

The first result we obtain in the paper is the following equivalent definition of Property $\Gamma$ for a countably decomposable type II$_1$ von Neumann algebra, which gives an analogue of Murray and von Neumann's definition of Property $\Gamma$ for a type II$_1$ factor.

  \vspace{0.2cm}

 {Proposition  \ref{3.5.1}.} \ {\em
Let $\mathcal{M}$ be a countably decomposable type II$_{1}$ von Neumann algebra  and   $\mathcal{Z}_{\mathcal{M}}$ be the center of $\mathcal{M}$. Suppose $\tau$ is a center valued trace from $\mathcal{M}$ to $\mathcal{Z}_{\mathcal{M}}$ such that $\tau(a)=a$  for all $a\in \mathcal {Z}_{\mathcal{M}}$. Then the following are equivalent.
\begin{enumerate}
\item [(1)] $\mathcal{M}$ has Property $\Gamma$.

 \item [(2)]
 There exist a positive inter $n_0\ge 2$ and a faithful normal tracial state $\rho$ on $\mathcal M$ such that, {\em for any
 $\epsilon >0$ and elements $a_{1}, a_{2}, \dots, a_{k} \in \mathcal{M}$, there exists a family of orthogonal equivalent projections
   $p_1,\ldots, p_{n_0}$ in $\mathcal M$ with sum $I$ satisfying  $\Vert p_ia_{j}-a_{j}p_i \Vert_{2} < \epsilon$  for all $i=1,\ldots, n_0$ and  $j=1,2, \dots, k$, where $\Vert \cdot \Vert_{2}$ is the $2$-norm induced by $\rho$.}
 \item  [(3)]
 There exists a faithful normal tracial state $\rho$ on $\mathcal M$ such that, {\em for any
 $\epsilon >0$ and elements $a_{1}, a_{2}, \dots, a_{k} \in \mathcal{M}$, there exists a unitary $u \in \mathcal{M}$ satisfying (i) $\tau (u)=0$ and (ii) $\Vert ua_{j}-a_{j}u \Vert_{2} < \epsilon$  for all $j=1,2, \dots, k$, where $\Vert \cdot \Vert_{2}$ is the $2$-norm induced by $\rho$.}

 \end{enumerate}

 }

  \vspace{0.2cm}

Next we are able to obtain  an upper bound for the similarity degree of a countably decomposable type II$_1$ von Neumann algebra with Property $\Gamma$, which extends Theorem 13 in \cite{Pi4}.

  \vspace{0.2cm}

 {Theorem \ref{mainthm1}.} \ {\em If $\mathcal{M}$ is
a countably decomposable type  II$_{1}$ von Neumann algebra with
Property $\Gamma$, then $d(\mathcal{M}) \leq 5$. }

\vspace{0.2cm}

From Theorem \ref{mainthm1}, it follows that Kadision's Similarity Problem for a countably decomposable type II$_1$ von Neumann algebra with Property $\Gamma$ has an affirmative answer.

The last main result we obtained in the paper is the following computation of the similarity degree for a class of C$^*$-algebras.

\vspace{0.2cm}

 {Theorem \ref{5.3}. } \ {\em Suppose $\mathcal{A}$ is a separable unital C$^*$-algebra
satisfying
\begin{enumerate} \item[] Condition (G):
if $\pi$ is a unital $*$-representation of $\mathcal{A}$ on a Hilbert space
$  H$ such that $\pi(\mathcal{A})''$ is a type  II$_{1}$
factor, then $\pi(\mathcal{A})''$ has Property $\Gamma$, where
$\pi(\mathcal{A})''$ is the von Neumann algebra generated by
$\pi(\mathcal{A})$ in $B( H)$.\end{enumerate} Then
$d(\mathcal{A}) \leq 3.$ Moreover, if $\mathcal A$ is non-nuclear, then $d(\mathcal{A}) = 3.$}

\vspace{0.2cm}

As a corollary, we get that if $\mathcal{A}$ is a minimal  tensor product of
two separable unital C*-algebras, one of which is nuclear and has no finite
dimensional $*$-representations, then $d(\mathcal{A}) \le 3 $. In particular, the similarity degree of a $\mathcal Z$-stable, non-nuclear, separable, unital C$^*$-algebra is   equal to $3$. This
gives a generalization of earlier results in \cite{Po}, \cite{JW},
\cite{HS}.

The paper is organized as follows. In section 2, we introduce
notation and preliminaries. In section 3, we will give   an
equivalent definition of Property $\Gamma$ for countably decomposable type  II$_{1}$ von
Neumann algebras. In section 4, we show that
if $\mathcal{M}$ is a countably decomposable type II$_{1}$ von Neumann algebra Property $\Gamma$, then $d(\mathcal{M}) \leq
5$. By the result in Section 4, we prove in Section 5 that if the type  II$_{1}$ central summand
in the type decomposition of a von Neumann algebra $\mathcal{M}$ is a countably decomposable von Neumann algebra with Property $\Gamma$, then any bounded,  $\sigma(\mathcal M,\mathcal M_{\sharp})$
to  $\sigma(B(H), B(H)_{\sharp})$  continuous, unital homomorphism $\phi: \mathcal{M} \to
B(H)$ is completely bounded and $\Vert \phi \Vert_{cb} \leq \Vert
\phi \Vert^{3}$. As a consequence of this result, we obtain that, if
  a  separable unital C*-algebra
$\mathcal{A}$ has Property c$^*$-$\Gamma$, then $d(\mathcal{A}) \leq 3$. This is the first paper of our series. In our following work in \cite{QHS}, the class of separable unital C$^*$-algebras with Property c$^*$-$\Gamma$ will be applied to show that, if $\mathcal{M}$ is a type II$_1$ von Neumann
algebra with Property $\Gamma$, then $d(\mathcal{M})=3$.

\section{Preliminaries}

\subsection{Similarity length and similarity degree of a unital C$^*$-algebra}
In this subsection, we recall Pisier's similarity length and similarity degree for a unital C$^*$-algebra.
\begin{definition}\label{2.1}
(\cite{Pi1}) A unital operator algebra $\mathcal{A}$ has finite similarity length at most $l \in \mathbb{N}$ if there exists a constant $C$ such that, for any $k \in \mathbb{N}$ and any $x \in M_{k}(\mathcal{A})$, threre exist an $n \in \mathbb{N}$ and scalar matrices $\alpha _{0} \in M_{k,n}(\mathbb{C}), \alpha _{1} \in M_{n}(\mathbb{C}), \dots, \alpha _{l-1} \in $ $M_{n}(\mathbb{C}), \alpha _{l} \in M_{n,k}(\mathbb{C})$ and diagonal matrices $D_{1}, D_{2}, \dots, D_{l} \in M_{n}(\mathcal{A})$ such that
$$x=\alpha _{0}D_{1}\alpha _{1}D_{2}...D_{l}\alpha _{l}$$
and
$$(\prod\limits_{0}^{l} \Vert \alpha _{l} \Vert) (\prod\limits_{1}^{l} \Vert D_{l} \Vert ) \leq C \Vert x \Vert .$$
The length $l(\mathcal{A})$ is defined to be the least possible $l$ for which these conditions are fulfilled.
\end{definition}

\bigskip
It was verified in \cite{Pi1} that the Kadison's Similarity Problem has a positive answer for a unital C$^*$-algebra if and only if the C$^*$-algebra has finite similarity length.

\begin{definition} \label{2.2}
(\cite{Pi2}) Let $\mathcal{A}$ be a unital C$^*$-algebra. We define
the similarity degree of $\mathcal{A}$ to be the infimum of all
positive numbers $\alpha$ (if it exists) for which there is $k>0$
such that, for every bounded unital homomorphism $\rho$ of
$\mathcal{A} $ on a Hilbert space $ H$, we have
$$\Vert \rho \Vert _{cb}\leq k \Vert \rho \Vert ^{\alpha }.$$
 We denote such infimum by $d(\mathcal{A})$. If there are no such pairs $(\alpha,k)$, we define $d(\mathcal{A})=\infty$.
\end{definition}
It was proved in \cite{Pi2} that the similarity degree and the similarity length of a unital C$^*$-algebra (if they are finite) are the same integer.

\subsection{Dual space}
Suppose $\mathcal{A}$ is a unital C$^*$-algebra. Its dual space, the set of all bounded linear functionals on $\mathcal{A}$, is denoted by $\mathcal{A}^{\sharp}$. The second dual space $\mathcal{A}^{\sharp \sharp}$ of $\mathcal{A} $  is isomorphic to $\pi (\mathcal{A})''$, where $\pi$ is the universal representation of $\mathcal{A}$ and $\pi (\mathcal{A})''$ is the von Neumann algebra generated by $\pi (\mathcal{A})$. Thus  $\mathcal{A}^{\sharp \sharp}$  is always viewed as a von Neumann algebra and  $\mathcal{A}$ becomes a C$^*$-subalgebra of $\mathcal{A}^{\sharp \sharp}$ when $\mathcal A$ is canonically embedded into $\mathcal{A}^{\sharp \sharp}$.

  Suppose $\mathcal M$ is a von Neumann algebra with a (unique) predual space $\mathcal M_\sharp$. We will denote by $\sigma(M,M_\sharp)$ the weak-$*$ topology on $\mathcal M$ induced by $\mathcal M_\sharp$.

The following lemma is well-known. (See Theorem 1 in \cite{Ba} or Proposition 1.21.13 in \cite{Sa})
\begin{lemma} \label{2.3}
Suppose  $\mathcal{A}$ is a unital C$^{* }$-algebra and $\phi:\mathcal{A} \to
B(H)$ is a bounded unital homomorphism of $\mathcal A$ on a Hilbert space $H$. Then $\phi$ can be extended to a bounded unital homomorphism $\overline{\phi}:\mathcal{A}^{\sharp \sharp} \to B(H)$ that is $\sigma(\mathcal{A}^{\sharp\sharp}, \mathcal{A}^{\sharp}) \to \sigma(B(H), B(H)_{\sharp})$ continuous (in other words, it is weak-$*$ to weak-$*$  continuous), where $B(H)_{\sharp}$ is the predual of $B(H)$. Moreover  $\Vert \overline{\phi} \Vert = \Vert \phi \Vert$.
\end{lemma}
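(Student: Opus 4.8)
The plan is to build $\overline{\phi}$ as a composition of two canonical maps and then verify the required properties, the only delicate one being multiplicativity. First I would pass to the double adjoint $\phi^{\sharp\sharp} : \mathcal{A}^{\sharp\sharp} \to B(H)^{\sharp\sharp}$, which is automatically $\sigma(\mathcal{A}^{\sharp\sharp},\mathcal{A}^{\sharp})$-to-$\sigma(B(H)^{\sharp\sharp}, B(H)^{\sharp})$ continuous, since it is the adjoint of the bounded map $\phi^{\sharp}$ and adjoints are weak-$*$ continuous; moreover $\Vert \phi^{\sharp\sharp}\Vert = \Vert \phi \Vert$. Because $B(H)$ is itself a dual space, $B(H) = (B(H)_{\sharp})^{*}$, the canonical embedding $\kappa : B(H)_{\sharp} \hookrightarrow B(H)^{\sharp}$ has an adjoint $\pi := \kappa^{*} : B(H)^{\sharp\sharp} \to B(H)$ which is a contractive, $\sigma(B(H)^{\sharp\sharp}, B(H)^{\sharp})$-to-$\sigma(B(H), B(H)_{\sharp})$ continuous projection restricting to the identity on the canonical copy of $B(H)$. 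I then set $\overline{\phi} := \pi \circ \phi^{\sharp\sharp}$.

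With this definition the elementary properties follow quickly. As a composition of weak-$*$ continuous maps, $\overline{\phi}$ is $\sigma(\mathcal{A}^{\sharp\sharp}, \mathcal{A}^{\sharp})$-to-$\sigma(B(H), B(H)_{\sharp})$ continuous. For $a$ in the canonical copy of $\mathcal{A}$ we have $\phi^{\sharp\sharp}(a) = \phi(a)$ inside $B(H)^{\sharp\sharp}$, and $\pi$ fixes $\phi(a) \in B(H)$, so $\overline{\phi}$ extends $\phi$; in particular $\overline{\phi}(1) = \phi(1) = I$, giving unitality. Since $\Vert \pi \Vert \le 1$ we get $\Vert \overline{\phi} \Vert \le \Vert \phi^{\sharp\sharp}\Vert = \Vert \phi\Vert$, and the reverse inequality is automatic because $\overline{\phi}$ restricts to $\phi$, hence $\Vert \overline{\phi}\Vert = \Vert \phi\Vert$.

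The main obstacle is to prove that $\overline{\phi}$ is multiplicative, and here the key tool is Goldstine's theorem (the canonical image of $\mathcal{A}$ is $\sigma(\mathcal{A}^{\sharp\sharp}, \mathcal{A}^{\sharp})$-dense in $\mathcal{A}^{\sharp\sharp}$) together with the fact that the product of the von Neumann algebra $\mathcal{A}^{\sharp\sharp}$, and the product of $B(H)$, are each separately (though not jointly) weak-$*$ continuous. I would carry this out in two stages. First fix $b \in \mathcal{A}$ and compare the two weak-$*$ continuous maps $x \mapsto \overline{\phi}(xb)$ and $x \mapsto \overline{\phi}(x)\overline{\phi}(b)$ on $\mathcal{A}^{\sharp\sharp}$; continuity of the first uses weak-$*$ continuity of right multiplication by $b$ followed by $\overline{\phi}$, and of the second uses $\overline{\phi}$ followed by right multiplication by $\overline{\phi}(b)$ in $B(H)$. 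They agree on $\mathcal{A}$ since $\phi$ is a homomorphism, so by density they agree on all of $\mathcal{A}^{\sharp\sharp}$. Next fix an arbitrary $x \in \mathcal{A}^{\sharp\sharp}$ and repeat the argument in the second variable, comparing $y \mapsto \overline{\phi}(xy)$ and $y \mapsto \overline{\phi}(x)\overline{\phi}(y)$; these now agree on $\mathcal{A}$ by the first stage, and a second density argument yields $\overline{\phi}(xy) = \overline{\phi}(x)\overline{\phi}(y)$ for all $x, y$. The one point demanding care throughout is that, because multiplication is only separately weak-$*$ continuous, one must keep exactly one factor fixed at each stage, which is precisely why the argument is split into these two passes rather than done at once.
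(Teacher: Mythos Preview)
Your proof is correct. Note, however, that the paper does not actually prove this lemma: it is stated as well-known, with references to Theorem~1 in Barnes and Proposition~1.21.13 in Sakai. Your argument is the standard one---factor through the bidual via $\phi^{\sharp\sharp}$, compose with the canonical weak-$*$ continuous projection $B(H)^{\sharp\sharp}\to B(H)$ coming from the predual, and recover multiplicativity by a two-step separate-continuity/density argument using Goldstine's theorem and Arens regularity of C$^*$-algebras. This is essentially the approach one finds in the cited references, so there is nothing to contrast.
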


\subsection{Direct integral}
The concept of direct integral was introduced by von Neumann in \cite{vN}. Here are some results about direct integral that we need in this paper.

\begin{lemma} \label{2.4}
(\cite{KR1}) Suppose $\mathcal{M}$ is a von Neumann algebra acting on a separable Hilbert space $H$. Let $\mathcal{Z}_{\mathcal{M}}$ be the center of $\mathcal{M}$. Then there is a direct integral decomposition of $\mathcal{M}$ relative to $\mathcal{Z}_{\mathcal{M}}$, i.e. there exists a locally compact complete separable metric measure space $(X, \mu)$ such that

\begin{enumerate}
\item[(i)] $H$ is (unitarily equivalent to) the direct integral of $\{ H_{s} : s \in X \}$ over $(X, \mu)$, where each $H_{s}$ is a separable Hilbert space, $s \in X$.

\item[(ii)] $\mathcal{M}$ is (unitarily equivalent to) the direct integral of $\{ \mathcal{M}_{s}: s \in X \}$ over $(X, \mu)$, where $\mathcal{M}_{s}$ is a factor in $B(H_{s})$ almost everywhere. Also, if $\mathcal{M}$ is of type $I_{n}$($n$ could be infinite), II$_{1}$, II$_{\infty}$ or  III, then the components $\mathcal{M}_{s}$ are, almost everywhere, of type I$_{n}$,  II$_{1}$,  II$_{\infty}$ or  III, respectively.
\end{enumerate}

Moreover, the center $\mathcal{Z}_{\mathcal{M}}$ is (unitarily equivalent to) the algebra of diagonalizable operators relative to this decomposition.
\end{lemma}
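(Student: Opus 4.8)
The plan is to realize the whole statement as an instance of von Neumann's direct integral decomposition of a separably acting von Neumann algebra relative to a prescribed abelian subalgebra, applied with that abelian subalgebra taken to be the center $\mathcal{Z}_{\mathcal{M}}$ itself. First I would record the two features of the center that make the machine run. Since $H$ is separable, $\mathcal{Z}_{\mathcal{M}}$ is an abelian von Neumann algebra acting on a separable Hilbert space, so by the structure theory of abelian von Neumann algebras it is $*$-isomorphic to $L^{\infty}(X,\mu)$ for a finite regular Borel measure $\mu$ on a space $X$ that may be taken to be a locally compact, complete, separable metric space (a standard measure space). Secondly, because every element of $\mathcal{M}$ commutes with the central algebra, one has $\mathcal{M}\subseteq \mathcal{Z}_{\mathcal{M}}'$, which is exactly the hypothesis under which elements of $\mathcal{M}$ become decomposable for a decomposition that diagonalizes $\mathcal{Z}_{\mathcal{M}}$.

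With $(X,\mu)$ fixed, I would invoke the basic direct integral theorem: there is a measurable field $\{H_s\}_{s\in X}$ of separable Hilbert spaces and a unitary $U\colon H\to \int_X^{\oplus}H_s\,d\mu(s)$ carrying $\mathcal{Z}_{\mathcal{M}}$ onto the algebra of diagonalizable operators, i.e.\ onto multiplication by the functions in $L^{\infty}(X,\mu)$. This establishes (i) and simultaneously the final ``Moreover'' clause identifying $\mathcal{Z}_{\mathcal{M}}$ with the diagonalizable operators. To obtain (ii), I would use that $\mathcal{M}\subseteq \mathcal{Z}_{\mathcal{M}}'$ forces each $a\in\mathcal{M}$ to be a decomposable operator, $a=\int_X^{\oplus}a_s\,d\mu(s)$ with $a_s\in B(H_s)$. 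Choosing a countable $\sigma$-weakly dense self-adjoint sequence $\{a_n\}$ in $\mathcal{M}$ and decomposing each $a_n$, I would define $\mathcal{M}_s$ to be the von Neumann algebra generated by $\{(a_n)_s\}$ for almost every $s$; a now-standard measurability argument then shows that $\{\mathcal{M}_s\}$ is a measurable field of von Neumann algebras and that $\mathcal{M}=\int_X^{\oplus}\mathcal{M}_s\,d\mu(s)$, independently of the chosen generating sequence.

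The factoriality claim in (ii) is where the choice of $\mathcal{Z}_{\mathcal{M}}$ as the decomposing algebra pays off. The key identity is that the center of a direct integral is the direct integral of the centers, $\mathcal{Z}_{\mathcal{M}}=\int_X^{\oplus}\mathcal{Z}_{\mathcal{M}_s}\,d\mu(s)$. Since we arranged $\mathcal{Z}_{\mathcal{M}}$ to coincide with the diagonalizable operators, which is precisely $\int_X^{\oplus}\mathbb{C}I_{H_s}\,d\mu(s)$, comparing the two expressions forces $\mathcal{Z}_{\mathcal{M}_s}=\mathbb{C}I_{H_s}$ for almost every $s$; that is, $\mathcal{M}_s$ is a factor almost everywhere. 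For the preservation of type I would use that each type is detected by countably many algebraic relations among projections (existence of $n$ mutually orthogonal equivalent abelian projections summing to $I$ for type $\mathrm{I}_n$, finiteness of $I$ for type II$_1$, and the corresponding semifiniteness/pure-infiniteness conditions for II$_\infty$ and III), and that such relations decompose measurably and hold fiberwise almost everywhere; hence the a.e.\ type of the fibers matches the global type.

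The main obstacle is entirely measure-theoretic rather than algebraic: it is the construction and bookkeeping of the measurable field structure --- choosing the generators, their fiberwise decompositions, the fields $\{\mathcal{M}_s\}$ and the type-witnessing projections all in a jointly measurable fashion --- together with a careful proof of the identity $\mathcal{Z}_{\mathcal{M}}=\int_X^{\oplus}\mathcal{Z}_{\mathcal{M}_s}\,d\mu(s)$, since ``the center of a direct integral is the direct integral of the centers'' genuinely requires the separability hypotheses and a measurable-selection argument. These are exactly the points worked out in the direct integral theory of \cite{KR1}, on which I would rely; the algebraic content sketched above is then routine.
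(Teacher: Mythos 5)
Your outline is correct and is essentially the argument the paper relies on: the paper gives no proof of this lemma, citing it directly from \cite{KR1}, and your sketch (diagonalizing $\mathcal{Z}_{\mathcal{M}}$ as the algebra of diagonalizable operators, decomposing $\mathcal{M}\subseteq\mathcal{Z}_{\mathcal{M}}'$ fiberwise, deducing factoriality from ``the center of the direct integral is the direct integral of the centers,'' and checking type preservation via measurable projection-theoretic witnesses) is exactly the development carried out in Section 14.2 of \cite{KR1}. No gaps beyond the measure-theoretic bookkeeping you correctly defer to that source.
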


\begin{lemma} \label{2.5}
(\cite{KR1}) If $H$ is the direct integral of $\{ H_{s} \}$ over $(X, \mu)$, $\mathcal{M}$ is a decomposable von Neumann algebra on $H$ (i.e every operator in $\mathcal{M}$ is decomposable relative to the direct integral decomposition, see Definition 14.1.6 in \cite{KR1}) and $\omega$ is a normal state on $\mathcal{M}$, then there is a mapping, $s \to \omega_{s}$, where $\omega_{s}$ is a positive normal linear functional on $\mathcal{M}_{s}$, and $\omega(a)=\int_{X} \omega_{s}(a(s))d\mu$ for each $a$ in $\mathcal{M}$. If $\mathcal{M}$ contains the algebra $\mathcal{C}$ of diagonalizable operators and $\omega \vert_{E\mathcal{M}E}$ is faithful or tracial, for some projection $E$ in $\mathcal{M}$, then $\omega_{s} \vert_{E(s)\mathcal{M}_{s}E(s)}$ is, accordingly, faithful or tracial almost everywhere.
\end{lemma}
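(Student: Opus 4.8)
The plan is to construct the field $s\mapsto\omega_s$ from the standard representation of a normal positive functional as a countable sum of vector states and then to derive the integral formula by Fubini--Tonelli. I would first recall that a normal state on a von Neumann algebra acting on $H$ has the form $\omega(a)=\sum_n\langle a\xi_n,\xi_n\rangle$ with $\xi_n\in H$ and $\sum_n\|\xi_n\|^2=1$. Viewing each $\xi_n$ as a measurable field $s\mapsto\xi_n(s)\in H_s$ with $\|\xi_n\|^2=\int_X\|\xi_n(s)\|^2\,d\mu$, and using that a decomposable $a=\int_X^\oplus a(s)\,d\mu$ satisfies $\langle a\xi_n,\xi_n\rangle=\int_X\langle a(s)\xi_n(s),\xi_n(s)\rangle\,d\mu$, I would set
$$\omega_s(T)=\sum_n\langle T\xi_n(s),\xi_n(s)\rangle,\qquad T\in\mathcal M_s,$$
which is automatically a positive normal functional on $\mathcal M_s$. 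Tonelli applied to $\sum_n\int_X\|\xi_n(s)\|^2\,d\mu<\infty$ shows $\sum_n\|\xi_n(s)\|^2<\infty$ almost everywhere, so $\omega_s$ is bounded a.e.; the estimate $|\langle a(s)\xi_n(s),\xi_n(s)\rangle|\le\|a\|\,\|\xi_n(s)\|^2$ (using that $\|a\|$ dominates the essential supremum of $\|a(s)\|$) makes the double family absolutely summable, so Fubini interchanges sum and integral to give $\omega(a)=\int_X\omega_s(a(s))\,d\mu$, with $s\mapsto\omega_s(a(s))$ measurable since each summand is measurable in the direct-integral calculus and countable sums preserve measurability.

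For the tracial refinement I would exploit that $\mathcal M\supseteq\mathcal C$ forces each diagonalizable $f=\int_X^\oplus f(s)I_{H_s}\,d\mu$, $f\in L^\infty(X,\mu)$, to be central in $\mathcal M$. Fixing $x,y\in E\mathcal M E$ and applying the trace identity to the pair $(fx,y)$ in $E\mathcal M E$, centrality of $f$ yields $\omega(fxy)=\omega(fyx)$, that is $\int_X f(s)\bigl[\omega_s(x(s)y(s))-\omega_s(y(s)x(s))\bigr]\,d\mu=0$ for every $f\in L^\infty(X,\mu)$; hence the bracketed integrable function vanishes a.e. for this fixed pair. To make the exceptional null set independent of the pair I would fix a countable $\sigma$-strong-$*$ dense sequence $\{x_k\}$ in $\mathcal M$ (available since $H$ is separable) whose sections $\{E(s)x_k(s)E(s)\}$ are, off a null set, $\sigma$-strong-$*$ dense in $E(s)\mathcal M_sE(s)$; applying the previous step to all pairs $(Ex_jE,Ex_kE)$ simultaneously and invoking normality of $\omega_s$ together with joint $\sigma$-strong-$*$ continuity of multiplication on bounded sets, the trace identity extends from the dense sequence to all of $E(s)\mathcal M_sE(s)$ for almost every $s$.

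For the faithful refinement I would disintegrate support projections. Let $c_s\in\mathcal M_s$ be the carrier projection of $\omega_s$ and $c\in\mathcal M$ that of $\omega$; the hypothesis $\mathcal M\supseteq\mathcal C$ guarantees that the decomposable projection $\int_X^\oplus c_s\,d\mu$ lies in $\mathcal M$. From $\omega(I-c)=\int_X\omega_s(I-c(s))\,d\mu=0$ I obtain $\omega_s(I-c(s))=0$ a.e., whence $c(s)\ge c_s$ a.e.; conversely $\omega\bigl(I-\int_X^\oplus c_s\,d\mu\bigr)=\int_X\omega_s(I-c_s)\,d\mu=0$ together with minimality of the carrier $c$ forces $c\le\int_X^\oplus c_s\,d\mu$, so altogether $c(s)=c_s$ a.e. Since $\omega|_{E\mathcal M E}$ is faithful exactly when its carrier dominates $E$, i.e. $cE=E$, this disintegrates to $c_sE(s)=E(s)$ a.e., which is precisely faithfulness of $\omega_s|_{E(s)\mathcal M_sE(s)}$.

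The main obstacle throughout is promoting integral identities to almost-everywhere pointwise statements, and this is exactly where separability and the presence of $\mathcal C$ are essential: the measurability of the support field $s\mapsto c_s$ (a measurable-selection fact) underlies the faithful case, while the simultaneous treatment of all pairs in the tracial case rests on choosing a countable section-dense sequence and transporting $\sigma$-strong-$*$ density to almost every fibre. Verifying these measurability and density-transfer statements inside the direct-integral formalism of Lemma \ref{2.4} is the technical heart of the argument; by contrast, the existence of $\omega_s$ and the integral formula are comparatively routine once the vector-state representation and Fubini are in place.
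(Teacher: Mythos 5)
The paper does not prove this lemma at all: it is quoted, with attribution, from \cite{KR1} (it is Lemma 14.1.19 there), so your proposal can only be measured against the source. Your first two parts hold up. The construction $\omega_s(T)=\sum_n\langle T\xi_n(s),\xi_n(s)\rangle$ from a representation $\omega(a)=\sum_n\langle a\xi_n,\xi_n\rangle$, with Tonelli giving $\sum_n\|\xi_n(s)\|^2<\infty$ a.e.\ and Fubini giving the integral formula, is exactly the standard (and Kadison--Ringrose's) argument. The tracial case is also sound: decomposable operators commute with diagonalizable ones, so $\mathcal C\subseteq\mathcal Z_{\mathcal M}$ as you claim; the $L^\infty$ test-function step correctly yields the trace identity a.e.\ for each fixed pair; and the passage to a single null set via a countable sequence whose sections are fibrewise dense is legitimate --- this is precisely the tool recorded as Lemma \ref{2.8} in this paper (KR 14.1.15/14.1.17), combined with normality of $\omega_s$ and joint strong-operator continuity of multiplication on bounded sets.

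The faithful case, however, contains a genuine error at its pivot. You assert that $\omega|_{E\mathcal ME}$ is faithful exactly when the carrier $c$ of $\omega$ satisfies $cE=E$, and the direction you actually use (faithfulness $\Rightarrow E\le c$) is the false one. Counterexample: $\mathcal M=M_2(\mathbb C)$, $\omega=\langle\,\cdot\,e_1,e_1\rangle$ with carrier $c=e_{11}$, and $E$ the projection onto $\mathbb C(e_1+e_2)$; then $E\mathcal ME=\mathbb CE$ and $\omega(E)=\tfrac12$, so $\omega|_{E\mathcal ME}$ is faithful although $cE\ne E$. The correct characterization is lattice-theoretic: for $a\in(E\mathcal ME)^+$, $\omega(a)=0$ iff the range projection $r(a)$ satisfies $r(a)\le E$ and $r(a)\le I-c$, so $\omega|_{E\mathcal ME}$ is faithful iff $E\wedge(I-c)=0$, which is strictly weaker than $E\le c$. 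To salvage your scheme you would need, in addition to your identity $c(s)=c_s$ a.e., a disintegration of the meet, $(P\wedge Q)(s)=P(s)\wedge Q(s)$ a.e.\ (obtainable from $P\wedge Q=\mathrm{SOT}\text{-}\lim_n(PQP)^n$ and a.e.\ monotone convergence of sections), to conclude $E(s)\wedge(I-c_s)=0$ a.e. Alternatively, and closer to the source's route, avoid carriers entirely: if faithfulness of $\omega_s$ on the corner fails on a non-null set, use the analytic-set selection theorem (KR 14.3.5--14.3.6, the same machinery as this paper's Lemma \ref{4.2}) to choose a measurable field of nonzero projections $q_s\le E(s)$ in $\mathcal M_s$ with $\omega_s(q_s)=0$, and patch them: since $\mathcal C\subseteq\mathcal M$ forces $\mathcal M'\subseteq\mathcal C'$, any decomposable operator with sections in $\mathcal M_s$ a.e.\ lies in $\mathcal M''=\mathcal M$, so $q=\int_X^\oplus q_s\,d\mu$ is a nonzero positive element of $E\mathcal ME$ with $\omega(q)=0$, contradicting faithfulness of $\omega$ on the corner. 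Note finally that even your step $c(s)=c_s$ silently requires both this membership criterion and the measurability of the field $s\mapsto c_s$; you flag the latter as ``a measurable-selection fact'' but supply nothing, and it is the same nontrivial selection machinery, so as written the faithful case is incomplete even after the characterization is repaired.
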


\begin{remark} \label{2.6}
Let $\mathcal{M}=\int_{X} \bigoplus \mathcal{M}_{s} d \mu$ and $H = \int_{X} \bigoplus H_{s} d \mu$ be the direct integral decompositions of $\mathcal{M}$ and $H$ relative to the center of $\mathcal{M}$. By the argument in section 14.1 in \cite{KR1}, we can find a separable Hilbert space $K$ and a family of unitaries $\{U_{s}:H_{s} \to K: s \in X \}$ such that $s \to U_{s}x(s)$ is measurable(i.e. $s \to \langle U_{s}x(s), y \rangle$ is measurable for any vector $y$ in $K$) for every $x\in H$ and $s \to U_{s}a(s)U_{s}^{*}$ is measurable(i.e. $s \to \langle U_{s}a(s)U_{s}^{* }y, z \rangle$ is measurable for any vectors $y,z$ in $K$) for every decomposable operator $a \in B(H)$.
\end{remark}

The following corollary follows directly from  Lemma 14.1.17 and
Lemma 14.1.15 in \cite{KR1}.
\begin{lemma} \label{2.8}
Let $\mathcal{M}$ be a von Neumann algebra acting on a separable
Hilbert space $H$. Let $\mathcal{M}= \int_{X} \bigoplus
\mathcal{M}_{s} d\mu$ and $H=\int_{X} H_{s} d \mu$ be the direct
integral decompositions of $\mathcal{M}$ and $H$ as in Lemma \ref{2.4}. There
exists  a $SOT$ dense sequence $\{ a_{j}: j \in \mathbb{N} \}$ in
the unit ball $(\mathcal{M})_{1}$ of $\mathcal{M}$ (or dense in in
the unit ball $(\mathcal{M}')_{1}$ of $\mathcal{M}$) such that the
sequence $\{ a_{j}(s): j \in \mathbb{N} \}$ is $SOT$ dense in the
unit ball $(\mathcal{M}_{s})_{1}$(or $(\mathcal{M}'_{s})_{1}$,
respectively) for almost every $s \in X$.
\end{lemma}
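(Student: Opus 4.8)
The plan is to produce the sequence by combining separability of the strong operator topology on bounded sets with the two cited results of Kadison and Ringrose. First I would observe that, since $H$ is separable, the unit ball $(B(H))_1$ is metrizable in the strong operator topology (SOT) --- for instance via $d(a,b)=\sum_{n}2^{-n}\Vert(a-b)\xi_n\Vert$ for a fixed dense sequence $\{\xi_n\}$ in the unit ball of $H$ --- and it is SOT-separable. Consequently $(\mathcal{M})_1$, as a subset, is itself SOT-separable, so there is a countable family $\{a_j:j\in\mathbb{N}\}\subseteq(\mathcal{M})_1$ that is SOT-dense in $(\mathcal{M})_1$. The same remark applied to the commutant produces a countable SOT-dense family in $(\mathcal{M}')_1$; since $\mathcal{M}'$ is again decomposable relative to the same decomposition, with $(\mathcal{M}')_s=(\mathcal{M}_s)'$ for almost every $s$, it suffices to treat $\mathcal{M}$ and then read off the commutant case verbatim.

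Next I would pass to fibers. Each $a_j$ lies in the decomposable algebra $\mathcal{M}$, so $a_j=\int_X^\oplus a_j(s)\,d\mu$. By Lemma 14.1.15 of \cite{KR1}, the norm of a decomposable operator equals the essential supremum of the norms of its fibers; hence $\Vert a_j(s)\Vert\le\Vert a_j\Vert\le 1$ off a $\mu$-null set $N_j$. Setting $N_0=\bigcup_j N_j$, which is still null, we get $a_j(s)\in(\mathcal{M}_s)_1$ for every $j$ and every $s\notin N_0$. It remains to verify the fiberwise density: for almost every $s$ the set $\{a_j(s):j\in\mathbb{N}\}$ is SOT-dense in $(\mathcal{M}_s)_1$. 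This is exactly the content supplied by Lemma 14.1.17 of \cite{KR1}, which guarantees that the fibers of an SOT-dense sequence of the unit ball of a decomposable von Neumann algebra are, for almost every $s$, SOT-dense in the unit ball of the fiber factor $\mathcal{M}_s$. Intersecting the relevant conull sets (the complement of $N_0$ together with the conull set furnished by the density statement) yields a single conull set on which all the asserted conclusions hold simultaneously.

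The delicate point --- and the reason the statement is phrased as a consequence of the direct-integral machinery rather than proved by hand --- is the interchange of quantifiers in the density step: one needs the single sequence $\{a_j\}$ to have fibers that are simultaneously dense, for almost every $s$, in each unit ball $(\mathcal{M}_s)_1$, with the exceptional null set independent of the target element of $(\mathcal{M}_s)_1$. Establishing this requires the measurability theory of operator fields underlying Lemma 14.1.17, together with the measurability of $s\mapsto\Vert a(s)\Vert$ and the norm identity from Lemma 14.1.15; granting those two lemmas, every remaining step is a routine countable-union-of-null-sets argument. I do not expect any genuinely new difficulty beyond correctly invoking these two lemmas and recording that they apply equally to $\mathcal{M}$ and to its commutant $\mathcal{M}'$.
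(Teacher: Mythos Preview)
Your proposal is correct and matches the paper's approach: the paper simply records that the lemma ``follows directly from Lemma 14.1.17 and Lemma 14.1.15 in \cite{KR1},'' and your write-up is a careful unpacking of exactly that citation together with the routine null-set bookkeeping. One small labeling point: the fact that $\Vert a\Vert$ is the essential supremum of $\Vert a(s)\Vert$ is Proposition 14.1.9 in \cite{KR1} (the paper cites it as such elsewhere), not Lemma 14.1.15, so you may want to adjust that reference.
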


\section{Type  II$_{1}$ von Neumann algebras with Property $\Gamma$}

Property $\Gamma$ of a type II$_{1}$ factor $\mathcal{A}$ was introduced by Murray and von Neumann in \cite{Mv}. {\em Suppose $\mathcal{A}$ is a type II$_{1}$ factor with a trace $\tau$. Then $\mathcal{A}$ has Property $\Gamma$ if and only if, given $\epsilon>0$ and elements $a_{1}, a_{2}, \dots, a_{k} \in \mathcal{A}$, there exists a unitary $u \in \mathcal{A}$, such that
\begin{enumerate}
\item[(i)] $\tau(u)=0$;

\item[(ii)] $\Vert ua_{j}-a_{j}u \Vert_{2}<\epsilon,  1 \leq j \leq k$.
\end{enumerate}
}

An equivalent definition of Property $\Gamma$ for a type  II$_{1}$ factor was given by Dixmier in \cite{Di1}. {\em Suppose $\mathcal{A}$ is a type  II$_{1}$ factor with a trace $\tau$. It has Property $\Gamma$ if and only if, given $\epsilon>0$, elements $a_{1}, a_{2}, \dots, a_{k} \in \mathcal{A}$ and a positive integer $n$, there exist orthogonal equivalent  projections $\{ p_{i}: 1 \leq i \leq n \} \subset \mathcal{A}$ with sum $I$, such that
$$\Vert p_{i}a_{j} - a_{j}p_{i} \Vert_{2}<\epsilon, 1 \leq i \leq n, 1 \leq j \leq k.$$
}

The following definition of Property $\Gamma$ for a type II$_{1}$ von Neumann algebra was given in \cite{QS}:

\begin{definition} \label{3.1}
(\cite{QS}) Suppose $\mathcal{M}$ is a type II$_{1}$ von Neumann algebra with a
predual $\mathcal M_{\sharp}$. Suppose that $\sigma (\mathcal M,
\mathcal M_\sharp)$ is the weak-$*$  topology on $\mathcal M$ induced
from $\mathcal M_\sharp$. We say that $\mathcal{M}$ has Property
$\Gamma$ if and only if $ \forall \ a_{1}, a_{2}, \dots, a_{k} \in
\mathcal{M}$ and $\forall \ n\in \Bbb N$, there exist a partially ordered set $\Lambda$  and a family of
projections $$\{ p_{i \lambda}: 1\le i\le n;  \lambda \in \Lambda \}\subseteq
\mathcal{M}$$ satisfying
\begin{enumerate}
\item [(i)] For each $\lambda \in \Lambda$,   $\{ p_{1 \lambda}, p_{2 \lambda}, \ldots,
p_{n \lambda} \}$ is a family of orthogonal equivalent projections in
$\mathcal M$ with sum $I$.
\item [(ii)] For each $1\le i\le n$ and $1\le j\le k$,
$$
\lim_{\lambda} (p_{i \lambda}a_{j}-a_{j}p_{i
\lambda})^*(p_{i \lambda}a_{j}-a_{j}p_{i \lambda}) =0 \qquad \text {in $\sigma(\mathcal
M, \mathcal M_\sharp)$ topology.}$$
\end{enumerate}
\end{definition}

It was proved in \cite{QS} that Definition \ref{3.1} coincides with Dixmier's (also with Murray and von Neumann's) definition of Property $\Gamma$ for a type II$_{1}$ factor.

\begin{example}
Let $\mathcal{M}_{1}$ be a type II$_{1}$ factor and $\mathcal{M}_{2}$ a type II$_{1}$ von Neumann algebra. Suppose $\mathcal{M}_{1}$ has Property $\Gamma$ (e.g. hyperfinite type II$_{1}$ factor). Then $\mathcal{M}_{1} \otimes \mathcal{M}_{2}$ has Property $\Gamma$.
\end{example}
Let $\mathcal{M}$ be a type II$_{1}$ von Neumann algebra acting on a separable Hilbert space $H$ and   $\mathcal{Z}_{\mathcal{M}}$ be the center of $\mathcal{M}$. Let $\mathcal{M}=\int_{X} \bigoplus \mathcal{M}_{s} d\mu$ and $H=\int_{X} \bigoplus H_{s} d\mu$ be the direct integral decompositions of $\mathcal{M}$ and $H$ over $(X, \mu)$ relative to $\mathcal{Z}_{\mathcal{M}}$. By Proposition 3.12 in \cite{QS}, $\mathcal{M}$ has Property $\Gamma$ if and only if $\mathcal{M}_{s}$ has Property $\Gamma$ for almost every $s \in X$.

 The following proposition gives an equivalent definition  of Property $\Gamma$ for a type II$_{1}$ von Neumann algebra with separable predual,  as an analogous to Murray and von Neumann's definition for type II$_{1}$ factors.

\begin{proposition} \label{3.5}
Let $\mathcal{M}$ be a type II$_{1}$ von Neumann algebra with separable predual and $\mathcal{Z}_{\mathcal{M}}$ be the center of $\mathcal{M}$. Suppose $\tau$ is a center valued trace from $\mathcal{M}$ to $\mathcal{Z}_{\mathcal{M}}$ such that $\tau(a)=a$  for all $a\in \mathcal {Z}_{\mathcal{M}}$. Then $\mathcal{M}$ has Property $\Gamma$ if and only if there exists a faithful normal tracial state $\rho$ on $\mathcal M$ such that, {\em for any
 $\epsilon >0$ and elements $a_{1}, a_{2}, \dots, a_{k} \in \mathcal{M}$, there exists a unitary $u \in \mathcal{M}$ satisfying (i) $\tau (u)=0$ and (ii) $\Vert ua_{j}-a_{j}u \Vert_{2} < \epsilon$  for all $j=1,2, \dots, k$, where $\Vert \cdot \Vert_{2}$ is the $2$-norm induced by $\rho$.}
\end{proposition}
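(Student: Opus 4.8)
The plan is to prove the two implications separately, using the direct integral decomposition of $\mathcal{M}$ over its center to reduce the harder implication to the factor case, where Definition \ref{3.1} is already known to agree with Murray and von Neumann's formulation. Since $\mathcal{M}$ has separable predual I may realize it on a separable Hilbert space and fix a decomposition $\mathcal{M} = \int_X^{\oplus} \mathcal{M}_s\, d\mu$ relative to $\mathcal{Z}_{\mathcal{M}}$ as in Lemma \ref{2.4}, so that $\mathcal{M}_s$ is a type II$_1$ factor for almost every $s$. Identifying $\mathcal{Z}_{\mathcal{M}}$ with the diagonalizable operators, the uniqueness of the center-valued trace forces $\tau(a)(s) = \tau_s(a(s))$ almost everywhere, where $\tau_s$ is the normalized trace on $\mathcal{M}_s$: indeed $a \mapsto \int_X^{\oplus} \tau_s(a(s))\, d\mu$ is a center-valued trace fixing $\mathcal{Z}_{\mathcal{M}}$ pointwise. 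Moreover, for any faithful normal tracial state $\rho$ in play, Lemma \ref{2.5} lets me write $\rho = \int_X \rho_s\, d\mu$ with each $\rho_s$ a faithful normal tracial positive functional a.e., hence $\rho_s = c(s)\tau_s$ for a measurable $c$ with $c(s)>0$ a.e.

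For the implication that Property $\Gamma$ gives the unitary condition, I would take any faithful normal tracial state $\rho$ (which exists since the predual is separable) and apply the normal state $\rho$ to the weak-$*$ convergence in part (ii) of Definition \ref{3.1} with $n = 2$. Since $\rho(x^*x) = \Vert x \Vert_2^2$, this produces, for any $\epsilon > 0$ and $a_1, \dots, a_k$, two orthogonal equivalent projections $p_1, p_2$ with $p_1 + p_2 = I$ and $\Vert p_i a_j - a_j p_i \Vert_2 < \epsilon/2$. The self-adjoint unitary $u = p_2 - p_1$ then satisfies $\tau(u) = \tau(p_2) - \tau(p_1) = 0$, because equivalent projections have equal central trace and $\tau(p_1) + \tau(p_2) = I$ forces $\tau(p_i) = I/2$; the triangle inequality gives $\Vert u a_j - a_j u \Vert_2 < \epsilon$.

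The harder implication is the converse, and here lies the main obstacle: the unitary hypothesis is a global statement that I must descend to almost every fiber. I would fix, via Lemma \ref{2.8}, a sequence $\{a_j\}$ that is SOT-dense in $(\mathcal{M})_1$ and whose fibers $\{a_j(s)\}$ are SOT-dense in $(\mathcal{M}_s)_1$ a.e. For each $m$, applying the hypothesis to $\{a_1, \dots, a_m\}$ with tolerance $1/m$ yields a unitary $u^{(m)} \in \mathcal{M}$ with $\tau(u^{(m)}) = 0$ and $\Vert u^{(m)} a_j - a_j u^{(m)} \Vert_2 < 1/m$ for $j \le m$. Decomposing the $2$-norm as $\Vert u^{(m)} a_j - a_j u^{(m)} \Vert_2^2 = \int_X c(s)\,\Vert u^{(m)}(s) a_j(s) - a_j(s) u^{(m)}(s) \Vert_{\tau_s,2}^2\, d\mu(s)$ and using $c(s)>0$ a.e., a diagonal extraction (from $L^1$-convergence to almost-everywhere convergence, performed successively over each $j$ and intersected over the countably many $j$) produces a single subsequence $u^{(m_l)}$ such that, for almost every $s$ and every $j$, $\tau_s(u^{(m_l)}(s)) = 0$ and $\Vert u^{(m_l)}(s) a_j(s) - a_j(s) u^{(m_l)}(s) \Vert_{\tau_s,2} \to 0$.

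For such an $s$ the unitaries $u^{(m_l)}(s)$ witness Murray and von Neumann's condition against the family $\{a_j(s)\}$; since this family is $2$-norm dense in $(\mathcal{M}_s)_1$ and near-commutation in the $\tau_s$-$2$-norm is stable under $2$-norm approximation of the entries (an estimate of the form $\Vert [v, b] - [v, b'] \Vert_{\tau_s,2} \le 2\Vert b - b' \Vert_{\tau_s,2}$ for unitary $v$), the condition extends to every finite subset of $\mathcal{M}_s$. Thus $\mathcal{M}_s$ has Property $\Gamma$ a.e., and Proposition 3.12 of \cite{QS} then gives Property $\Gamma$ for $\mathcal{M}$. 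The delicate points to verify are the fiberwise identifications of $\tau$ and $\rho$, the positivity of the weight $c$, and above all that the diagonal subsequence can be chosen so the approximation holds simultaneously for all $j$ on one conull set.
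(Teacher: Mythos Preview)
Your proposal is correct and follows essentially the same architecture as the paper: the forward direction builds a trace-zero unitary from nearly central equivalent projections, and the converse descends to fibers via the direct integral decomposition, Lemma \ref{2.8}, a diagonal subsequence extraction, and then Proposition 3.12 of \cite{QS}. The one substantive variation is in establishing $\tau_s(u_i(s))=0$ almost everywhere: the paper does this by applying the Dixmier Approximation Theorem in $\mathcal{M}$ to write $0=\tau(u_i)$ as a norm limit of convex combinations of unitary conjugates and then passing these to the fibers, whereas you identify $\tau$ fiberwise with $\tau_s$ via uniqueness of the center-valued trace; your route is a bit more direct, while the paper's sidesteps having to verify measurability of $s\mapsto\tau_s(a(s))$.
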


\begin{proof}
(Part I)  Suppose $\mathcal{M}$ has Property $\Gamma$. Fix $n\ge 2$, $\epsilon >0$ and elements $a_{1}, a_{2}, \dots, a_{k} \in \mathcal{M}$. Then by Corollary 3.4 in \cite{QS},  there exist a faithful normal tracial state $\rho$ and  $n$ equivalent orthogonal projections $p_{1}, p_{2}, \ldots, p_n$ with sum $I$ such that $\Vert p_{i}a_{j}-a_{j}p_{i} \Vert_{2}< \epsilon/n$ for all $  i=1,2, \ldots, n$ and $ j=1,2, \dots, k$, where $\Vert \cdot \Vert_{2}$ is the $2$-norm induced by $\rho$. Let $u=p_{1}+\lambda p_{2}+\cdots +\lambda^{n-1} p_n$, where $\lambda=e^{2\pi i/n}$ is the $n$-th root of unit. Then $u$ is a unitary in $\mathcal{M}$ satisfying $\Vert ua_{j}-a_{j}u \Vert_{2} <\epsilon$ for all  $ j=1,2, \dots, k$. Since $p_{1}$, $p_{2}$, $\ldots, p_n$ are equivalent projections, $\tau (p_{1})=\tau (p_{2})=\cdots =\tau(p_n)$ and thus $\tau (u)=(1+\lambda+\ldots+ \lambda^{n-1})\tau(p_1)=0$.

(Part II) Conversely, suppose that there exists a faithful normal tracial state $\rho$ such that, for any
 $\epsilon >0$ and elements $a_{1}, a_{2}, \dots, a_{k} \in \mathcal{M}$, there exists a unitary $u \in \mathcal{M}$ satisfying (i) $\tau (u)=0$ and (ii) $\Vert ua_{j}-a_{j}u \Vert_{2} < \epsilon$  for all $j=1,2, \dots, k$, where $\Vert \cdot \Vert_{2}$ is the $2$-norm induced by $\rho$. From the fact that  $\mathcal{M}$ has separable predual, it follows $\mathcal M$ is countably generated. Thus, from the preceding argument, we know  there exists a sequence $\{ u_{i}: i \in \mathbb{N} \}$ of unitaries in $\mathcal{M}$ satisfying
\begin{enumerate}
\item[(1)] $\tau (u_{i})=0$ for each $i \in \mathbb{N}$;
\item[(2)] \begin{eqnarray}
\lim\limits_{i \to \infty} \Vert u_{i}a-au_{i}\Vert_{2} = 0 \qquad \text{for each $a \in \mathcal{M}$.} \label{8}
\end{eqnarray}
\end{enumerate}

Since $\mathcal{M}$ has separable predual, by Propostion A.2.1 in \cite{JS}, there is a faithful normal representation $\pi$ of $\mathcal{M}$ on the separable Hilbert space. Replacing $\mathcal{M}$ by $\pi(\mathcal{M})$ in the following if necessary, we assume that $\mathcal{M}$ acts on a separable Hilbert space $H$.

By Lemma \ref{2.4}, relative to $\mathcal{Z}_{\mathcal{M}}$, we obtain a direct integral decomposition $\mathcal{M} = \int_{X} \bigoplus M_{s} d \mu$ over $(X, \mu)$ and we may assume that $\mathcal{M}_{s}$ is a type II$_{1}$ factor with a trace $\tau_{s}$ for every $s \in X$.

Since $\rho$ is a faithful normal tracial state, by Lemma \ref{2.5}, we can further assume that there is a positive faithful normal tracial functional $\rho_{s}$ on $\mathcal{M}_{s}$  for every $s \in X$  such that
$$\rho (a) = \int_{X} \rho_{s} (a(s)) d \mu  \qquad \text{for each $a \in \mathcal{M}$.}.$$
Therefore $\rho_{s}$ is a positive multiple of the trace $\tau_{s}$ on $\mathcal{M}_{s}$ for every $s \in X$.

By Lemma \ref{2.8}, we may assume  $\{ b_{j}: j \in \mathbb{N} \}$
is a $SOT$ dense subset of the unit ball $(\mathcal{M})_{1}$ of
$\mathcal{M}$ such that that $\{ b_{j}(s): j \in \mathbb{N} \}$ is
$SOT$ dense in the unit ball of $\mathcal{M}_{s}$ for every $s \in
X$.

Let $u_{i}^{(0)}=u_{i}$ for each $i \in \mathbb{N}$. In the following we will construct a family of unitaries $\{ u_{i}^{(k)}: i, k \in \mathbb{N} \}$ and  a family of $\mu$-null subsets $\{X_{k}:k\in \Bbb N\}$ of $X$ such that, for each $k \in \mathbb{N}$,
\begin{enumerate}
\item[(i')] $\{ u_{i}^{(k)}: i \in \mathbb{N} \}$ is a subsequence of $\{ u_{i}^{(k-1)}: i \in \mathbb{N} \}$;

\item[(ii')] $\lim\limits_{i \to \infty} \rho_{s} ((u_{i}^{(k)}(s)b_{k}(s)-a_{k}(s)u_{i}^{(k)}(s))^{*}(u_{i}^{(k)}(s)a_{k}(s)-a_{k}(s)u_{i}^{(k)}(s))) = 0$ for any $s \in X \setminus X_{k}$.
\end{enumerate}
By (\ref{8}), we get
$$\begin{aligned}
\lim\limits_{i \to \infty}  &\Vert u_{i}a-au_{i}\Vert_{2}^2  \\ &=\lim\limits_{i \to \infty}\int_{X} \rho_{s} ((u_{i}(s)b_{1}(s)-b_{1}(s)u_{i}(s))^{*}(u_{i}(s)b_{1}(s)-b_{1}(s)u_{i}(s))) d\mu \\ & = 0.\end{aligned}$$
Therefore there exists a $\mu$-null subset $X_{1}$ of $X$ and a subsequence $\{u_{i}^{(1)}\}$ of $\{u_{i}^{(0)}\}$ such that, for any $s \in X\setminus X_{1}$,
\begin{eqnarray}
\lim\limits_{i \to \infty} \rho_{s} ((u_{i}^{(1)}(s)b_{1}(s)-b_{1}(s)u_{i}^{(1)}(s))^{*}(u_{i}^{(1)}(s)b_{1}(s)-b_{1}(s)u_{i}^{(1)}(s))) = 0. \label{9}
\end{eqnarray}
Since $\rho_{s}$ is a positive multiple of $\tau_{s}$ for every $s \in X$, (\ref{9}) gives
\begin{eqnarray}
\lim\limits_{i \to \infty} \Vert u_{i}^{(1)}(s)b_{1}(s)-b_{1}(s)u_{i}^{(1)}(s) \Vert_{2,s}=0, \notag
\end{eqnarray}
where $\Vert \cdot \Vert_{2,s}$ is the $2$-norm induced by $\tau_{s}$ on $\mathcal{M}_{s}$.
Again, there exists a $\mu$-null subset $X_{2}$ of $X$ and a subsequence $\{ u_{i}^{(2)} \}$ of $\{ u_{i}^{(1)} \}$ such that, for any $s \in X \setminus X_{2}$,
\begin{eqnarray}
\lim\limits_{i \to \infty} \Vert u_{i}^{(2)}(s)b_{2}(s)-b_{2}(s)u_{i}^{(2)}(s) \Vert_{2,s} = 0. \notag
\end{eqnarray}
Continuing in this way, we obtain a $\mu$-null subset $X_{k}$ of $X$ and a subsequence $\{u_{i}^{(k)} \}$ of $\{ u_{i}^{(k-1)} \}$  for each $k \geq 1$ such that, for any $s \in X \setminus X_{k}$,
\begin{eqnarray}
\lim\limits_{i \to \infty} \Vert u_{i}^{(k)}(s)b_{k}(s)-b_{k}(s)u_{i}^{(k)}(s) \Vert_{2,s} = 0. \notag
\end{eqnarray}

The argument in the preceding paragraph produces a subsequence $\{u_i^{(i)}\}$ of $\{u_i\}$ such that
$$\lim\limits_{i \to \infty} \Vert u_i^{(i)}(s)b_{j}(s)-b_{j}(s)u_i^{(i)}(s) \Vert_{2,s} = 0$$
for any $j \in \mathbb{N}, s \in X\setminus X_{0}$, where $X_{0}= \cup_{j \in \mathbb{N}} X_{j}$ is a $\mu$-null subset of $X$. Replacing $\{ u_{i}: i \in \mathbb{N} \}$ by $\{u_i^{(i)}\}$ and $X$ by $X \setminus X_{0}$ if necessary,   we might assume
$$\lim\limits_{i \to \infty} \Vert u_{i}(s)b_{j}(s)-b_{j}(s)u_{i}(s) \Vert_{2,s} = 0 \qquad \text{ for any $j \in \mathbb{N}, s \in X$.}$$ Since $\{ b_{j}(s): j \in \mathbb{N} \}$ is SOT dense in the unit ball of $\mathcal{M}_{s}$, we get
\begin{eqnarray}
\lim\limits_{i \to \infty}\Vert u_{i}(s)a-au_{i}(s) \Vert_{2,s} = 0\quad \text { for any $a \in \mathcal{M}_{s}, s \in X$.} \label{12}
\end{eqnarray}

For each $i\ge 1$, since   $u_{i}$ is a unitary in $\mathcal{M}$,   there exists a $\mu$-null subset $Y_{i}$ of X such that $u_{i}(s)$ is a unitary in $\mathcal{M}_{s}$ for each $s \in X \setminus Y_{i}$. Let $Y_{0}= \cup_{i=1}^{\infty} Y_{i}$. Then $\mu (Y_{0})=0$ and $u_{i}(s)$ is a unitary  in $\mathcal M_s$ for all $i \in \mathbb{N}, s \in X\setminus Y_{0}$. So we may just assume that

\begin{equation}\text{$u_{i}(s)$ is a unitary in $\mathcal M_s$ for any $i \in \mathbb{N}, s \in X$.}\label {u1}\end{equation}

For each $i \in \mathbb{N}$, from the Dixmier Approximation Theorem and the fact that  $\tau(u_{i})=0$, $0$ is in the norm closure of the convex hull of $\{ v^{*}u_{i}v$: $v$ is a unitary in $\mathcal{M} \}$. Therefore there exist a sequence of positive integers $\{k_{n} \in \mathbb{N}\}$,  a family of positive numbers $\{ \lambda_{j}^{(n)}: 1 \leq j \leq k_{n}, n\in \Bbb N \} \subseteq [0,1]$ and a family of  unitaries $\{ v_{j}^{(n)}:  1 \leq j \leq k_{n} \}$  in $\mathcal M$ such that
$$ \sum\limits_{j=1}^{k_{n}} \lambda_{j}^{(n)}=1 \quad \text {and} \quad \lim\limits_{n \to \infty} \Vert \sum\limits_{j=1}^{k_{n}} \lambda_{j}^{(n)} (v_{j}^{(n)})^{*}u_{i}v_{j}^{(n)} \Vert = 0.$$
By Proposition 14.1.9 in \cite{KR1}, $\Vert a \Vert$ is the essential bound of $\{ \Vert a(s) \Vert : s \in X \}$ for any $a \in \mathcal{M}$. Note that $\{ v_{j}^{(n)}:  1 \leq j \leq k_{n} \}$ is a family of  unitaries   in $\mathcal M$.   We know that there exists a $\mu$-null subset $Z_{0}$ of X such that
\begin{enumerate}
\item[(a)] for each $n \in \mathbb{N}$, each $1 \leq j \leq k_{n}$ and each $s \in X \setminus Z_{0}$, $v_{j}^{n}(s)$ is a unitary in $\mathcal{M}_{s}$;

\item[(b)] for each $i \in \mathbb{N}$ and each $s \in X \setminus Z_{0}$,
$$\lim\limits_{n \to \infty} \Vert \sum\limits_{j=1}^{k_{n}} \lambda_{j}^{(n)} (v_{j}^{(n)}(s))^{*}u_{i}(s)v_{j}^{(n)}(s) \Vert = 0.$$
\end{enumerate}
Now from (a), (b) and the Dixmier Approximation Theorem, we obtain that
\begin{eqnarray}
\tau_{s}(u_{i}(s))=0, \qquad \text{for any $i \in \mathbb{N}, s \in X \setminus Z_{0}$}. \label{13}
\end{eqnarray}
Here $Z_0$ is a  $\mu$-null subset   of X.

By  (\ref{13}),  (\ref{u1}),  and (\ref{12}), $\mathcal{M}_{s}$ is a type II$_{1}$ factor with Property $\Gamma$ for each $s \in X\setminus Z_{0}$. Therefore by Proposition 3.12 in \cite{QS}, $\mathcal{M}$ has Property $\Gamma$.
\end{proof}

\begin{lemma} \label{mylemma 3.5.1}
Let $\mathcal{M}$ be a countably decomposable type II$_{1}$ von Neumann algebra   and   $\mathcal{Z}_{\mathcal{M}}$ be the center of $\mathcal{M}$. Suppose $\tau$ is a center valued trace from $\mathcal{M}$ to $\mathcal{Z}_{\mathcal{M}}$ such that $\tau(a)=a$  for all $a\in \mathcal {Z}_{\mathcal{M}}$. Suppose there exists a faithful normal tracial state $\rho$ on $\mathcal M$ such that,
\begin{enumerate}\item []
 {\em for any
 $\epsilon >0$ and elements $a_{1}, a_{2}, \dots, a_{k} \in \mathcal{M}$, there exists a unitary $u \in \mathcal{M}$ satisfying (i) $\tau (u)=0$ and (ii) $\Vert ua_{j}-a_{j}u \Vert_{2} < \epsilon$  for all $j=1,2, \dots, k$, where $\Vert \cdot \Vert_{2}$ is the $2$-norm induced by $\rho$.}
\end{enumerate}
Then, for any finite subset $F$ of $\mathcal M$, there exists a  von
Neumann subalgebra $\mathcal M_1$ of $\mathcal M$ such that
\begin{enumerate}
  \item[(1)] $\mathcal M_1$ has separable predual.
\item[(2)] $F\subseteq \mathcal M_1\subseteq \mathcal M$.
  \item[(3)] $\mathcal M_1$ is a type II$_1$ von Neumann algebra with Property $\Gamma$ in the sense of Definition \ref{3.1}.
\end{enumerate}
\end{lemma}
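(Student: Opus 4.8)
The plan is to build, by a countable exhaustion, a separable von Neumann subalgebra $\mathcal M_1$ containing $F$ that internalises the hypothesis, and then to invoke Proposition \ref{3.5}. Concretely, I want $\mathcal M_1$ to satisfy: (I) $\mathcal M_1$ has separable predual and $F\subseteq\mathcal M_1\subseteq\mathcal M$; (II) $\mathcal M_1$ is type II$_1$; and (III) with $\rho_1:=\rho|_{\mathcal M_1}$, for every $\epsilon>0$ and every finite tuple $a_1,\dots,a_k\in\mathcal M_1$ there is a unitary $u\in\mathcal M_1$ with $\tau_1(u)=0$ and $\|ua_j-a_ju\|_2<\epsilon$, where $\tau_1$ is the center-valued trace of $\mathcal M_1$. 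Once (I)--(III) hold, Proposition \ref{3.5} applied to $\mathcal M_1$ yields Property $\Gamma$ in the sense of Definition \ref{3.1}, which is conclusion (3) of the lemma, while (I) and (II) give conclusions (1) and (2).

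For the construction I would fix $\rho$ as in the hypothesis and build an increasing sequence $\mathcal N_0\subseteq\mathcal N_1\subseteq\cdots$ of separable von Neumann subalgebras of $\mathcal M$, with $\mathcal N_0$ generated by $F$ together with the generators of a unital copy of the hyperfinite type II$_1$ factor $R\subseteq\mathcal M$ (such a copy exists because $\mathcal M$ is type II$_1$: repeatedly halve $I$ and pass to the relative commutant of the resulting matrix units, corners of type II$_1$ algebras being type II$_1$). At stage $n$ I choose a countable $\|\cdot\|_2$-dense subset $D_n$ of the unit ball of $\mathcal N_n$ and let $\mathcal N_{n+1}$ be generated by $\mathcal N_n$ together with: (a) for each finite tuple from $D_n$ and each $m\in\mathbb N$, a unitary $u\in\mathcal M$ with $\tau(u)=0$ and $\|ud-du\|_2<1/m$ for $d$ in the tuple, furnished by the hypothesis; (b) the elements $\tau(d)$ for $d\in D_n$; and (c) the reflection unitaries described below. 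Put $\mathcal M_1:=(\bigcup_n\mathcal N_n)''$. Each $\mathcal N_n$ is separable, so $\mathcal M_1$ is countably generated and carries the faithful normal tracial state $\rho_1=\rho|_{\mathcal M_1}$; hence $\mathcal M_1$ has separable predual. Since $R\subseteq\mathcal M_1$ unitally and any finite von Neumann algebra containing a unital II$_1$ factor has no type I part (a nonzero type I finite summand would afford a nonzero finite-dimensional representation of the simple algebra $R$), $\mathcal M_1$ is type II$_1$.

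The main obstacle is the transfer of the trace condition: the unitaries of step (a) satisfy $\tau(u)=0$ for the center-valued trace of $\mathcal M$, whereas Proposition \ref{3.5} needs $\tau_1(u)=0$, and a priori $\tau_1\neq\tau|_{\mathcal M_1}$ because $\mathcal M_1$ may have a larger center. I therefore must arrange $\mathcal Z_{\mathcal M_1}=\mathcal M_1\cap\mathcal Z_{\mathcal M}$; the inclusion $\supseteq$ is automatic, and the inclusion $\subseteq$ is what step (c) is for, done quantitatively. Writing $\beta(d)=\|d-\tau(d)\|_2=\mathrm{dist}_2(d,\mathcal Z_{\mathcal M})$, the Dixmier Approximation Theorem gives $\sup_{v\in\mathcal U(\mathcal M)}\|vd-dv\|_2\ge\beta(d)$, so for each $d\in D_n$ I add a unitary $v_d\in\mathcal M$ with $\|v_dd-dv_d\|_2\ge\tfrac12\beta(d)$. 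If now $x\in\mathcal Z_{\mathcal M_1}$ and $d_k\in\bigcup_n D_n$ with $\|x-d_k\|_2\to0$, then, since $x$ commutes with each $v_{d_k}\in\mathcal M_1$ and the $v_{d_k}$ are unitaries,
$$\tfrac12\beta(d_k)\le\|v_{d_k}d_k-d_kv_{d_k}\|_2=\|v_{d_k}(d_k-x)-(d_k-x)v_{d_k}\|_2\le2\|d_k-x\|_2\to0.$$
As $\beta$ is $\|\cdot\|_2$-continuous, $\beta(x)=0$, i.e.\ $x=\tau(x)\in\mathcal Z_{\mathcal M}$. This proves $\mathcal Z_{\mathcal M_1}=\mathcal M_1\cap\mathcal Z_{\mathcal M}$. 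Together with closedness of $\mathcal M_1$ under $\tau$ (step (b) and the $\|\cdot\|_2$-continuity of $\tau$), the restriction $\tau|_{\mathcal M_1}$ is a normal tracial conditional expectation onto $\mathcal Z_{\mathcal M_1}$ fixing the center pointwise, hence equals $\tau_1$; in particular every unitary $u$ of step (a) has $\tau_1(u)=\tau(u)=0$.

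With the trace condition secured, I finish routinely. Given $a_1,\dots,a_k\in\mathcal M_1$ and $\epsilon>0$, approximate each $a_j$ in $\|\cdot\|_2$ by elements of some $D_n$ and invoke step (a) with $m$ large; using $\|uy-yu\|_2\le2\|y\|_2$ for a unitary $u$ to absorb the approximation error, the resulting unitary $u\in\mathcal M_1$ satisfies $\tau_1(u)=0$ and $\|ua_j-a_ju\|_2<\epsilon$. Thus $\mathcal M_1$ is a type II$_1$ von Neumann algebra with separable predual satisfying the hypothesis of Proposition \ref{3.5}, so $\mathcal M_1$ has Property $\Gamma$ in the sense of Definition \ref{3.1}. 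This establishes (III), and (I), (II) were arranged in the construction, completing the proof. The one delicate point is the center-matching step above; everything else is standard bookkeeping in a countable exhaustion.
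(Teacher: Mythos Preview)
Your proof is correct, and the overall architecture---a countable exhaustion producing a separable subalgebra $\mathcal M_1$ in which the hypothesis is internalised, followed by an appeal to Proposition~\ref{3.5}---is the same as the paper's. The difference lies in how you transfer the condition $\tau(u)=0$ to $\tau_1(u)=0$, and in how you certify that $\mathcal M_1$ is of type II$_1$.

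The paper never identifies $\mathcal Z_{\mathcal M_1}$. Instead, at each stage $t$ it throws into $F_{t+1}$ finitely many Dixmier unitaries $v_1^{(t)},\dots,v_{n_t}^{(t)}\in\mathcal M$ so that some convex combination of $v_j^{(t)}u_i(v_j^{(t)})^*$ has \emph{operator norm} less than $1/t$ for each previously chosen $u_i$. Since these unitaries end up inside $\mathcal M_1$, the element $0$ lies in the norm-closed convex hull of $\{vu_iv^*:v\in\mathcal U(\mathcal M_1)\}$; the Dixmier property of the finite algebra $\mathcal M_1$ then forces $\tau_1(u_i)=0$ directly. The paper also dispenses with embedding $R$: it deduces that $\mathcal M_1$ is type II$_1$ from the very existence of the asymptotically central trace-zero unitaries $u_t$. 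Your route, by contrast, adds ``reflection'' unitaries to pin down $\mathcal Z_{\mathcal M_1}=\mathcal M_1\cap\mathcal Z_{\mathcal M}$ and then identifies $\tau_1=\tau|_{\mathcal M_1}$; this costs an extra step but yields the bonus that the center of $\mathcal M_1$ is explicitly described, and your embedding of $R$ makes the type II$_1$ conclusion immediate rather than implicit. Both approaches are valid; the paper's is a bit leaner, yours a bit more informative.
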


\begin{proof}
  We are going to prove the following claim first.

\vspace{0.1cm} {\bf Claim \ref{mylemma 3.5.1}.1.   }   {\em For any
finite subset $K$ of $\mathcal M$ and any $\epsilon>0$, there exist
an $n\in\Bbb N$,  unitary elements $u, v_1,\ldots, v_n\in\mathcal
M$, such that   $\tau(u)=0$; $ \ \forall \ x\in K$,
$\|ux-xu\|_2<\epsilon$; and
    $\forall x\in K $,  there is an element  $y$ in the convex hull of   $ \{v_1xv_1^*, \ldots, v_{n}xv_{n}^*\} $ satisfying $\|y-\tau(x)\|<\epsilon.$ }

\noindent{Proof of Claim \ref{mylemma 3.5.1}.1: }   From the
assumption on the faithful normal tracial state $\rho$ of $\mathcal
M$, for a finite subset $K$   of $\mathcal M$ and an  $\epsilon>0$,
there is a unitary $u$ in $ \mathcal M$ such that   $\tau (u)=0$ and
  $\Vert ux-xu \Vert_{2} < \epsilon$  for all $x\in K$. By
Dixmier Approximation Theorem, $\tau(x)\in conv_{\mathcal M}(x)^{=}$
for all $x\in\mathcal M$. Therefore, there exist a positive integer
$n$ and  unitary elements $  v_1,\ldots, v_{n}\in\mathcal M$  such
that   for any $x\in K$,  there is an element  $y$ in the convex
hull of   $\{v_1xv_1^*, \ldots, v_{n}xv_{n}^*\} $ satisfying
$\|y-\tau(x)\|<\epsilon.$ This finished the proof of the claim.

\vspace{0.2cm}

\noindent(Continue the proof of Lemma  \ref{mylemma 3.5.1}) \ Let $F$ be a finite subset of $\mathcal M$.

Let $F_1=F$ and $t=1$. From Claim \ref{mylemma 3.5.1}.1, there exist  a positive integer $n_1$,
 unitary elements $u_1, v_1^{(1)},\ldots, v_{n_1}^{(1)}\in\mathcal M$, such that    $\tau(u_1)=0$;    $\|u_1x-xu_1\|_2<1, \  \forall \ x\in F_1 $; and
  for any $x\in F_1$,    there is a   $y$ in the convex hull of   $\{v_1^{(1)}x(v_1^{(1)})^*, \ldots, v_n^{(1)}x(v_n^{(1)})^*\} $ satisfying $\|y-\tau(x)\|<1.$ Let $F_2=F_1\cup\{u_1, v_1^{(1)},\ldots, v_{n_1}^{(1)}\}$.

Assume that $F_1\subseteq F_2\subseteq \ldots \subseteq F_{t}$ have been constructed for some $t\ge 2$. Again, from Claim \ref{mylemma 3.5.1}.1, there exist  a positive integer $n_{t}$,  unitary elements $u_{t}, v_1^{({t})},\ldots, v_{n_{t}}^{({t})}\in\mathcal M$, such that   $\tau(u_{t})=0$;    $\|u_{t}x-xu_{t}\|_2<1/t, \  \forall \ x\in F_t $; and   for any $x\in F_t$,
   there is a   $y$ in the convex hull of   $\{v_{1}^{({t})}x(v_{1}^{({t})})^*, \ldots, v_{n_t}^{({t})}x(v_{n_t}^{({t})})^*\} $ satisfying $\|y-\tau(x)\|<1/t.$ Let $F_{t+1}=F_t\cup\{u_t, v_1^{(t)},\ldots, v_{n_t}^{(t)}\}$.

Continuing the preceding process, we are able to obtain an
increasing sequence $\{F_t\}_{t=1}^\infty$ of finite subsets of $\mathcal
M$ and a sequence of unitaries $\{u_t\}_{t=1}^\infty$ of $\mathcal
M$ such that,  $\forall t\ge 1$,
\begin{enumerate}
\item [(0)]  $ u_t\in F_{t+1}
\subseteq\mathcal M ;  $
\item [(i)] $\tau(u_{t})=0$;
\item [(ii)] $\|u_{t}x-xu_{t}\|_2<1/t, \ \forall \ x\in F_t $;
  \item [(iii)] for $1\le i\le t$,  there is    $y_i$ in the convex hull of   $\{vu_{i} v^*: v \text{ is a unitary element in } F_{t+2} \}
   $ satisfying $\|y_i-\tau(u_i)\|=\|y_i\|<1/(t+1)$;
\end{enumerate}

 Let $\mathcal M_1$ be the von Neumann subalgebra generated by $\{F_t: t\ge 1\}$
 in $\mathcal M$ and $\mathcal Z_1$ be the center of $\mathcal M_1$. Suppose $\tau_1$ be a center-value trace on $\mathcal M_1$ such that
  $\tau_1(a)=a, \ \forall \ a\in \mathcal Z_1$. Then $\rho$ is still a faithful normal tracial state of $\mathcal M_1$. Since $\mathcal M_1$ is countably generated, we know that (1) $\mathcal M_1$ has a separable predual. Obviously, (2) $F\subseteq \mathcal M_1\subseteq \mathcal M$.

We claim  that (3) $\mathcal M_1$  is a von Neumann algebra with Property $\Gamma$ in the
sense of Definition \ref{3.1}.
    Notice $\{F_t\}$ is an increasing sequence of subsets. We have, for each $1\le t_1<t<t_2-2 $,  \begin{enumerate} \item[(ii$_{ 2}$)]
      $\|u_{t}x-xu_{t}\|_2<1/t, \  \forall \ x\in F_{t_1} $;
  \item [(iii$_{2}$)] there is    $y$ in the convex hull of   $ \{vu_{t} v^*: v \text{ is a unitary   in } F_{t_2} \}  $ satisfying
  $\|y\|<1/t_2.$
\end{enumerate}
From (iii$_{2}$), it induces by the Dixmier Approximation Theorem that
  \begin{enumerate}
  \item [(iii$_{3}$)] $\tau_1(u_t)=0$ for each $t\ge 1$.
\end{enumerate}
From the existence of such sequence $\{u_t\}_{t=1}^\infty$ in
$\mathcal M_1$
   satisfying (iii$_{3}$) and (ii$_{2}$), it follows that $\mathcal M_1$
is a type II$_1$ von Neumann algebra.
   From Proposition \ref{3.5}, we conclude that $\mathcal M_1$ has
  Property $\Gamma$.
  The proof of the lemma is finished.
\end{proof}

Now we can quickly prove the following result.

\begin{proposition} \label{3.5.1}
Let $\mathcal{M}$ be a countably decomposable type II$_{1}$ von Neumann algebra  and   $\mathcal{Z}_{\mathcal{M}}$ be the center of $\mathcal{M}$. Suppose $\tau$ is a center valued trace from $\mathcal{M}$ to $\mathcal{Z}_{\mathcal{M}}$ such that $\tau(a)=a$  for all $a\in \mathcal {Z}_{\mathcal{M}}$. Then the following are equivalent.
\begin{enumerate}
\item [(1)] $\mathcal{M}$ has Property $\Gamma$.

 \item [(2)]
 There exist a positive inter $n_0\ge 2$ and a faithful normal tracial state $\rho$ on $\mathcal M$ such that, {\em for any
 $\epsilon >0$ and elements $a_{1}, a_{2}, \dots, a_{k} \in \mathcal{M}$, there exists a family of orthogonal equivalent projections
   $p_1,\ldots, p_{n_0}$ in $\mathcal M$ with sum $I$ satisfying  $\Vert p_ia_{j}-a_{j}p_i \Vert_{2} < \epsilon$  for all $i=1,\ldots, n_0$ and  $j=1,2, \dots, k$, where $\Vert \cdot \Vert_{2}$ is the $2$-norm induced by $\rho$.}
 \item  [(3)]
 There exists a faithful normal tracial state $\rho$ on $\mathcal M$ such that, {\em for any
 $\epsilon >0$ and elements $a_{1}, a_{2}, \dots, a_{k} \in \mathcal{M}$, there exists a unitary $u \in \mathcal{M}$ satisfying (i) $\tau (u)=0$ and (ii) $\Vert ua_{j}-a_{j}u \Vert_{2} < \epsilon$  for all $j=1,2, \dots, k$, where $\Vert \cdot \Vert_{2}$ is the $2$-norm induced by $\rho$.}

 \end{enumerate}
\end{proposition}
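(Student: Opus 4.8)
The plan is to prove the cycle of implications $(1)\Rightarrow(2)\Rightarrow(3)\Rightarrow(1)$. At the outset I would record that a countably decomposable finite von Neumann algebra always carries a faithful normal tracial state $\rho$, so that the functionals existentially quantified in $(2)$ and $(3)$ are genuinely available. For $(1)\Rightarrow(2)$ I would fix $n_0=2$ (any $n_0\ge 2$ works identically) and, given $\epsilon>0$ and $a_1,\dots,a_k\in\mathcal M$, feed these data with $n=n_0$ into Definition \ref{3.1}. This yields a net $\{p_{i\lambda}\}$ of families of $n_0$ orthogonal equivalent projections summing to $I$ with $(p_{i\lambda}a_j-a_jp_{i\lambda})^*(p_{i\lambda}a_j-a_jp_{i\lambda})\to 0$ in $\sigma(\mathcal M,\mathcal M_\sharp)$. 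Since $\rho\in\mathcal M_\sharp$ is normal, applying it to this limit gives $\|p_{i\lambda}a_j-a_jp_{i\lambda}\|_2^2=\rho\big((p_{i\lambda}a_j-a_jp_{i\lambda})^*(p_{i\lambda}a_j-a_jp_{i\lambda})\big)\to 0$; as there are only finitely many pairs $(i,j)$, some index of the net already meets all the $2$-norm estimates, and that family witnesses $(2)$.

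For $(2)\Rightarrow(3)$ I would keep the same $\rho$, invoke $(2)$ with tolerance $\epsilon/n_0$ to get projections $p_1,\dots,p_{n_0}$, and set $u=\sum_{i=1}^{n_0}\lambda^{i-1}p_i$ with $\lambda=e^{2\pi i/n_0}$. Then $u$ is unitary; from $ua_j-a_ju=\sum_i\lambda^{i-1}(p_ia_j-a_jp_i)$ the triangle inequality gives $\|ua_j-a_ju\|_2<\epsilon$; and since the $p_i$ are equivalent the center-valued trace satisfies $\tau(p_i)=I/n_0$, so $\tau(u)=\big(\sum_i\lambda^{i-1}\big)(I/n_0)=0$. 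This is precisely the root-of-unity computation already performed in Part I of the proof of Proposition \ref{3.5}.

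The real content lies in $(3)\Rightarrow(1)$, which I would route through Lemma \ref{mylemma 3.5.1}. Fixing $a_1,\dots,a_k\in\mathcal M$ and $n\in\mathbb N$, I apply that lemma to $F=\{a_1,\dots,a_k\}$ to obtain a von Neumann subalgebra $\mathcal M_1\subseteq\mathcal M$ with separable predual, containing $F$, of type II$_1$ and having Property $\Gamma$ in the sense of Definition \ref{3.1}. As $\mathcal M_1$ has separable predual and $\rho|_{\mathcal M_1}$ is a faithful normal tracial state, the $2$-norm characterization of Property $\Gamma$ (Corollary 3.4 in \cite{QS}) furnishes, for each $\epsilon>0$, orthogonal equivalent projections $p_1^{\epsilon},\dots,p_n^{\epsilon}\in\mathcal M_1$ summing to $I$ with $\|p_i^{\epsilon}a_j-a_jp_i^{\epsilon}\|_2<\epsilon$, the $2$-norm being that of $\rho|_{\mathcal M_1}$, which agrees with the $2$-norm of $\rho$ on elements of $\mathcal M_1$. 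Regarding these as projections in $\mathcal M$ (orthogonality and equivalence persist, since the implementing partial isometries lie in $\mathcal M_1\subseteq\mathcal M$), I index the families by $\epsilon\downarrow 0$ and transfer from $\|\cdot\|_2$ to weak-$*$: on bounded sets, $\|\cdot\|_2$-convergence to $0$ with respect to the faithful normal trace $\rho$ forces $\sigma(\mathcal M,\mathcal M_\sharp)$-convergence, so $(p_i^{\epsilon}a_j-a_jp_i^{\epsilon})^*(p_i^{\epsilon}a_j-a_jp_i^{\epsilon})\to 0$ weak-$*$ as $\epsilon\to 0$. This is exactly condition (ii) of Definition \ref{3.1} for $\mathcal M$, with condition (i) immediate, so $\mathcal M$ has Property $\Gamma$.

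I expect the main obstacle to be precisely this last transfer: converting the per-$\epsilon$, $2$-norm approximations living in the separable-predual piece $\mathcal M_1$ into an honest net satisfying the weak-$*$ formulation of Definition \ref{3.1} on the full algebra $\mathcal M$. The one nontrivial ingredient is the standard fact that for a finite von Neumann algebra with a faithful normal tracial state, $\|\cdot\|_2$-convergence of a bounded net entails $\sigma$-weak (in fact $\sigma$-strong) convergence; I would justify it by representing $\mathcal M$ in standard form on $L^2(\mathcal M,\rho)$, estimating $\|x_\lambda b\|_2\le\|b\|\,\|x_\lambda\|_2$ for $b\in\mathcal M$, and then approximating arbitrary vectors of $L^2(\mathcal M,\rho)$ by such $\widehat{b}$.
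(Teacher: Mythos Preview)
Your proposal is correct and follows essentially the same route as the paper. The paper's proof of $(3)\Rightarrow(1)$ is the terse sentence ``It follows from Proposition \ref{3.5}, Lemma \ref{mylemma 3.5.1} and Corollary 3.4 in \cite{QS},'' and you have unpacked exactly this: invoke Lemma \ref{mylemma 3.5.1} on the finite set $F=\{a_1,\dots,a_k\}$ to land in a separable-predual $\mathcal M_1$ with Property $\Gamma$, then extract the almost-central projections via Corollary 3.4 in \cite{QS}. The one place you do slightly more than the paper is your final paragraph: rather than verifying Definition \ref{3.1} by hand via the $\|\cdot\|_2\Rightarrow\sigma(\mathcal M,\mathcal M_\sharp)$ transfer, you could simply observe that you have established condition (b) of Corollary 3.4 in \cite{QS} for the countably decomposable algebra $\mathcal M$ itself (the projections live in $\mathcal M_1\subseteq\mathcal M$), and that corollary already contains the equivalence with Property $\Gamma$. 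Your direct verification is perfectly valid, just a touch redundant.
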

\begin{proof}
(1) $\Rightarrow$ (2): It is clear.

 (2) $\Rightarrow$ (3):
 It follows from the similar arguments in  Part I of the proof of Proposition \ref{3.5}.

 (3) $\Rightarrow$ (1):  It follows from Proposition \ref{3.5},  Lemma  \ref{mylemma 3.5.1} and Corollary 3.4 in \cite{QS}.
\end{proof}

The following lemma will be needed in the next section.
\begin{lemma} \label{mylemma 3.5.2} Let $\mathcal M$ be a countably decomposable von Neumann algebra with Property $\Gamma$.
For any finite subset $F$ of $\mathcal M$, there exists a type
II$_1$ von Neumann algebra $\mathcal M_1$ with separable predual and
with Property $\Gamma$ such that $F\subseteq \mathcal M_1\subseteq
\mathcal M$.
\end{lemma}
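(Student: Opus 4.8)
The plan is to derive this lemma as a direct consequence of the two preceding results, Proposition \ref{3.5.1} and Lemma \ref{mylemma 3.5.1}, so that no genuinely new construction is required: all the work has already been done. First I would observe that $\mathcal M$, being a countably decomposable type II$_1$ von Neumann algebra with Property $\Gamma$, satisfies condition (1) of Proposition \ref{3.5.1}, so by the implication (1) $\Rightarrow$ (3) there is a faithful normal tracial state $\rho$ on $\mathcal M$ with the following feature: for every $\epsilon>0$ and all $a_1,\dots,a_k\in\mathcal M$ there is a unitary $u\in\mathcal M$ with $\tau(u)=0$ and $\|ua_j-a_ju\|_2<\epsilon$ for each $j$, where $\|\cdot\|_2$ is induced by $\rho$. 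This is exactly the standing hypothesis imposed on $(\mathcal M,\tau,\rho)$ in Lemma \ref{mylemma 3.5.1}.

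Next I would apply Lemma \ref{mylemma 3.5.1} verbatim to the given finite subset $F$. It produces a von Neumann subalgebra $\mathcal M_1$ with separable predual satisfying $F\subseteq\mathcal M_1\subseteq\mathcal M$, and guarantees that $\mathcal M_1$ is a type II$_1$ von Neumann algebra with Property $\Gamma$ in the sense of Definition \ref{3.1}. These three conclusions are precisely the three assertions of the present lemma, so the argument terminates immediately; the lemma is essentially a repackaging of Lemma \ref{mylemma 3.5.1} with its hypothesis verified through Proposition \ref{3.5.1}.

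Since no step in this outline is an obstacle, I would instead flag where the real difficulty resides, namely inside Lemma \ref{mylemma 3.5.1} itself, which I am permitted to cite. There $\mathcal M_1$ is built as the von Neumann algebra generated by an increasing sequence $\{F_t\}$ of finite sets, engineered so that each approximating unitary $u_t$ is centrally trace-zero and asymptotically commutes with all earlier elements, and so that enough conjugating unitaries $v_j^{(t)}$ land inside the subalgebra to carry out the Dixmier averaging that forces $\tau_1(u_t)=0$ for the center-valued trace $\tau_1$ of $\mathcal M_1$. The delicate point, already resolved there, is that $\mathcal M_1$ inherits the type II$_1$ structure and Property $\Gamma$ rather than collapsing to a type I or finite-dimensional algebra; this is secured by the presence of the asymptotically central, $\tau_1$-trace-zero unitaries together with an application of Proposition \ref{3.5} to $\mathcal M_1$. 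Thus the only thing I must check in the proof I am writing is that the hypotheses of Lemma \ref{mylemma 3.5.1} hold, which Proposition \ref{3.5.1} supplies.
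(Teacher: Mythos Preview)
Your proposal is correct and matches the paper's own proof exactly: the paper simply writes ``It now follows from Proposition \ref{3.5.1} and Lemma \ref{mylemma 3.5.1},'' which is precisely the two-step argument you describe (use (1)$\Rightarrow$(3) of Proposition \ref{3.5.1} to verify the hypothesis of Lemma \ref{mylemma 3.5.1}, then invoke that lemma). Your additional commentary on where the substantive work lies is accurate but unnecessary for the proof itself.
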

\begin{proof}
It now follows from Proposition \ref{3.5.1} and Lemma \ref{mylemma 3.5.1}.
\end{proof}

\begin{remark}
It was shown in \cite{QS} that the cohomology group $H^{k}(\mathcal{M}, \mathcal{M})$ of a type II$_{1}$ von Neumann algebra with separable predual and Property $\Gamma$ is trivial for any $k \ge 2$. By a similar argument as section 7 in \cite{EFAR2}, it follows from Theorem 6.4 in \cite{QS} and Lemma \ref{mylemma 3.5.2} that $H^{k}(\mathcal{M}, \mathcal{M})=0$, $k \ge 2$ for any countably decomposable type II$_1$ von Neumann algebra with Property $\Gamma$.
\end{remark}

\section{Similarity degree of   type II$_{1}$ von Neumann algebras with Property $\Gamma$}
Let us recall the  definition of similarity length of a unital C$^*$-algebra as given in \cite{Pi4}.

\begin{definition} \label{4.1}
(\cite{Pi4}) Let $\mathcal{A}$ be a unital C$^*$-algebra. Fix $n \in \mathbb{N}$. For each $x \in M_{n}(\mathcal{A})$ and $d \in \mathbb{N}$, we denote
$$\Vert x \Vert_{(d,\mathcal A)}=\inf\{ \prod\limits_{i=1}^{d} \Vert \alpha_{i} \Vert \prod\limits_{i=0}^{d} \Vert D_{i} \Vert \},$$
where the infimum runs over all possible representations $x=\alpha_{0} D_{1} \alpha_{1} D_{2} \dots D_{d} \alpha_{d}$, $\alpha _{0} \in M_{k,n}(\mathbb{C})$, $\alpha _{1} \in M_{n}(\mathbb{C}), \dots, \alpha _{d-1} \in $ $M_{n}(\mathbb{C}), \alpha _{d} \in M_{n,k}(\mathbb{C})$ are scalar matricies and $D_{1}, D_{2}, \dots, D_{d}  \in M_{n}(\mathcal{A})$ are diagonal matrices.
\end{definition}

It is clear that, for any $x \in M_{n}(\mathcal{A}), d \in \mathbb{N}$,
$$\Vert x \Vert \leq \Vert x \Vert_{(d,\mathcal A)}$$
and
$$\Vert x \Vert_{(d+1,\mathcal A)} \leq \Vert x \Vert_{(d,\mathcal A)}.$$
It was shown in \cite{Pi4} that $\Vert x \Vert_{(1,\mathcal A)} \leq n \Vert x \Vert$ for any $x \in M_{n}(\mathcal{A})$.

Before we prove the main theorem of this section, we need the following lemmas.
\begin{lemma} \label{4.2}
Let $\mathcal{M}$ be a von Neumann algebra acting on a separable Hilbert space $H$. Let $\mathcal{M}= \int_{X} \bigoplus \mathcal{M}_{s} d\mu$ and $H=\int_{X} H_{s} d \mu$ be the direct integral decompositions of $\mathcal{M}$ and $H$ as in Lemma \ref{2.4}. Suppose $p$ is a projection in $\mathcal{M}$ such that there exist a $\mu$-null subset $X_{0}$ of $X$ and projections $\{p_{i,s}\in \mathcal M_s: 2\le i\le n, s\in X\}$   satisfying $\{ p(s), p_{2,s}, \dots, p_{n,s} \}$ is a family of $n$ orthogonal equivalent projections in $\mathcal{M}_{s}$ for all $s\in X\setminus X_0$.  Then there exist  $n-1$ projections $ p_{2}, \dots, p_{n}$ in $\mathcal{M}$ such that $\{ p, p_{2}, \dots, p_{n} \}$ is a set of $n$ orthogonal equivalent projections in $\mathcal{M}$.
\end{lemma}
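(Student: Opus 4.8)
The plan is to throw away the fiberwise data almost immediately and replace it by a single central inequality, after which the construction is forced by the comparison theorem. Since we are in the type II$_1$ (hence finite) setting of this section, let $\tau\colon\mathcal{M}\to\mathcal{Z}_{\mathcal{M}}$ denote the unique normal center-valued trace with $\tau(I)=I$, and identify $\mathcal{Z}_{\mathcal{M}}$ with the diagonalizable operators, i.e.\ with $L^{\infty}(X,\mu)$, via the decomposition of Lemma \ref{2.4}. The key observation is that we do \emph{not} need the projections $\{p_{i,s}\}$ to assemble into a measurable field; we only need a scalar estimate from each fiber.

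First I would translate the hypothesis into the statement $\tau(p)\le \tfrac1n I$. By Lemma \ref{2.5} together with the direct-integral decomposition of the center-valued trace (see \cite{KR1}), the central element $\tau(p)$ corresponds to the measurable function $s\mapsto \tau_{s}(p(s))$, where $\tau_{s}$ is the unique normalized trace on the factor $\mathcal{M}_{s}$. For each $s\in X\setminus X_{0}$ the projections $p(s),p_{2,s},\dots,p_{n,s}$ are mutually orthogonal and mutually equivalent in $\mathcal{M}_{s}$, so they have equal trace and their sum is a projection of trace at most $\tau_{s}(I)=1$; hence $n\,\tau_{s}(p(s))\le 1$, that is $\tau_{s}(p(s))\le 1/n$ for almost every $s$. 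Since $\tau(p)$ is a genuine element of $\mathcal{Z}_{\mathcal{M}}$ whose almost-everywhere fiber values are exactly $\tau_{s}(p(s))$, measurability is automatic and the pointwise bound gives $\tau(p)\le \tfrac1n I$ in $\mathcal{Z}_{\mathcal{M}}$.

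Next I would build $p_{2},\dots,p_{n}$ one at a time. Set $p_{1}=p$, and suppose $p_{1},\dots,p_{k-1}$ (with $2\le k\le n$) are already mutually orthogonal projections, each equivalent to $p$, so that $\tau(p_{i})=\tau(p)$. Put $r=I-\sum_{i=1}^{k-1}p_{i}$; then $\tau(r)=I-(k-1)\tau(p)$, and since $k-1\le n-1$ and $\tau(p)\le\tfrac1n I$,
$$\tau(r)=I-(k-1)\tau(p)\ \ge\ I-(n-1)\tfrac1n I=\tfrac1n I\ \ge\ \tau(p).$$
By the comparison theorem for finite von Neumann algebras in its center-valued form ($e\precsim f\iff\tau(e)\le\tau(f)$, see \cite{KR1}), the inequality $\tau(p)\le\tau(r)$ gives $p\precsim r$, so there is a subprojection $p_{k}\le r$ with $p_{k}\sim p$. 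Because $p_{k}\le r=I-\sum_{i<k}p_{i}$, it is orthogonal to $p_{1},\dots,p_{k-1}$. After $n-1$ steps we obtain $p_{2},\dots,p_{n}$ that are mutually orthogonal and each equivalent to $p$, whence $\{p,p_{2},\dots,p_{n}\}$ is a family of $n$ orthogonal equivalent projections in $\mathcal{M}$.

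The step I expect to require the most care is the first one: verifying that the center-valued trace really is compatible with the direct integral, so that the purely pointwise (and not assumed measurable) family $\{p_{i,s}\}$ nonetheless yields the honest central inequality $\tau(p)\le\tfrac1n I$. This is exactly the point at which the seemingly weak hypothesis becomes usable, and it is the only place the $\{p_{i,s}\}$ enter; once $\tau(p)\le\tfrac1n I$ is in hand, the inductive comparison argument is routine and self-contained.
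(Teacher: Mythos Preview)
Your argument is correct, but it proves a weaker statement than the one written: the lemma as stated assumes only that $\mathcal{M}$ is a von Neumann algebra on a separable Hilbert space, while your proof uses the center-valued trace and hence needs $\mathcal{M}$ finite. You flag this explicitly (``the type II$_{1}$ setting of this section''), and since Lemma~\ref{4.2} is only applied inside Lemma~\ref{4.3} where $\mathcal{M}$ is type II$_{1}$, the restriction is harmless for the paper. Still, be aware you are not proving the lemma exactly as stated.

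Your route is genuinely different from the paper's. The paper never reduces to a trace inequality; instead it runs a full measurable-selection argument: it parametrizes systems of matrix units by a Polish space, shows the set of pairs $(s,(E_{ij}))$ satisfying $E_{11}=U_sp(s)U_s^{*}$ and the matrix-unit relations is Borel (hence analytic), checks that its projection onto $X$ is conull using the fiberwise hypothesis, and then invokes the measurable cross-section theorem (Theorem 14.3.6 in \cite{KR1}) to produce a measurable field $s\mapsto E_{ij,s}$, from which the global $p_2,\dots,p_n$ are assembled. This works without any finiteness assumption, at the cost of the machinery of analytic sets. Your approach trades that machinery for the comparison theorem: once you know $\tau(p)\le\tfrac1n I$, the inductive construction of $p_2,\dots,p_n$ is one line. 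The only nontrivial point in your proof is the compatibility $\tau(p)(s)=\tau_s(p(s))$ a.e.; this is standard (and you correctly isolate it), e.g.\ via Lemma~\ref{2.5} one gets $\rho_s=\rho_s(I)\tau_s$ on each factor and hence $s\mapsto\tau_s(p(s))$ is a measurable quotient, after which uniqueness of the center-valued trace gives the identification. So: the paper's proof is more general and self-contained at the level of direct integrals; yours is shorter and more conceptual, but tied to the finite case.
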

\begin{proof}
We let $X_{0}$ be a $\mu$-null subset  of $X$ and $\{p_{i,s}\in \mathcal M_s: 2\le i\le n, s\in X\}$  be  projections  satisfying $  p(s), p_{2,s}, \dots, p_{n,s}  $ are  $n$ orthogonal equivalent projections in $\mathcal{M}_{s}$ for all $s\in X\setminus X_0$.

Let $K$ be a separable Hilbert space and $\{ U_{s}: H_{s} \to K \}$ be a family of unitaries as in Remark \ref{2.6}. Denote by $\mathcal{B}$ the unit ball of $B(K)$ equipped with the $*-$strong operator topology. Then $\mathcal{B}$ is metrizable by setting
$$d(S, T)=\sum\limits_{j \in \mathbb{N}} 2^{-j}(\Vert (S-T)e_{j} \Vert + \Vert (S^{*}-T^{*})e_{j} \Vert)$$
for any $S, T \in \mathcal{B}$, where $\{ e_{j}: j \in \mathbb{N} \}$ is an orthonormal basis for $K$. The metric space $(\mathcal{B}, d)$ is complete and separable. For each $i ,j \in \{ 1, 2, \dots, n \}$, let $\mathcal{B}_{ij}= \mathcal{B}$. Let $\mathcal{C}= \prod\limits_{i,j=1}^{n} \mathcal{B}_{ij}$ provided with the product topology of the $*-$strong operator topology on each $\mathcal{B}_{ij}$. It follows that $\mathcal{C}$ is metrizable and it is a complete separable metric space.

By Lemma \ref{2.8}, suppose $\{ a'_{r}: r \in \mathbb{N} \}$ is a
strong operator dense sequence in the unit ball $(\mathcal{M}')_{1}$
of $\mathcal{M}'$ such that that the sequence $\{ a'_{r}(s): r \in
\mathbb{N} \}$ is strong operator dense in the unit ball
$(\mathcal{M}'_{s})_{1}$ of $\mathcal{M}'_{s}$ for every $s \in X$.

We will denote by $(s, (E_{11}, E_{12}, \dots, E_{1n}, E_{21}, \dots, E_{2n}, \dots, E_{nn}))  $ an element in $X \times \mathcal{C}$.
It is easy to see that the maps
\begin{align}
(s, (E_{11}, E_{12}, \dots, E_{1n}, E_{21}, \dots, E_{2n}, \dots, E_{nn})) &\to E_{11}, \label{14} \\
(s, (E_{11}, E_{12}, \dots, E_{1n}, E_{21}, \dots, E_{2n}, \dots, E_{nn})) &\to U_{s}p(s)U_{s}^{*}, \label{15} \\
(s, (E_{11}, E_{12}, \dots, E_{1n}, E_{21}, \dots, E_{2n}, \dots, E_{nn})) &\to E_{ij}, \label{16} \\
(s, (E_{11}, E_{12}, \dots, E_{1n}, E_{21}, \dots, E_{2n}, \dots, E_{nn})) &\to E_{ji}^{*}, \label{17} \\
(s, (E_{11}, E_{12}, \dots, E_{1n}, E_{21}, \dots, E_{2n}, \dots, E_{nn})) &\to E_{ij}E_{kl}, \label{18} \\
(s, (E_{11}, E_{12}, \dots, E_{1n}, E_{21}, \dots, E_{2n}, \dots, E_{nn})) &\to E_{ij}U_{s}a'_{r}(s)U_{s}^{*}, \label{19} \\
(s, (E_{11}, E_{12}, \dots, E_{1n}, E_{21}, \dots, E_{2n}, \dots, E_{nn})) &\to U_{s}a'_{r}(s)U_{s}^{*}E_{ij} \label{20}
\end{align}
are measurable from $X \times \mathcal{C}$ to $\mathcal{B}$ when $\mathcal{C}$ is equipped with the Borel structure obtained from the product topology. By Lemma 14.3.1 in \cite{KR1}, there is a Borel $\mu$-null subset $X_{1}$ of $X$ such that, when restricted to $X\setminus X_{1}$, these maps are all Borel measurable.

Let $N=X_{0} \cup X_{1}$. It follows that $\mu(N)=0$. {\em Let $\eta$ be the set of elements
$$(s, (E_{11}, E_{12}, \dots, E_{1n}, E_{21}, \dots, E_{2n}, \dots, E_{nn})) \in (X\setminus N) \times \mathcal{C}$$
satisfying
\begin{enumerate}
\item[(i)] $E_{11}=U_{s}p(s)U_{s}^{*}$;

\item[(ii)] $E_{ij}=E_{ji}^{*}$ and $E_{ij}E_{kl}=\delta_{jk} E_{il}$ for any $i, j, k, l \in \{ 1, 2, \dots, n\}$;

\item[(iii)] $E_{ij}U_{s}a'_{r}(s)U_{s}^{*} = U_{s}a'_{r}(s)U_{s}^{*}E_{ij}$ for any $i ,j \in \{ 1, 2, \dots, n\}$.
\end{enumerate}
 }

\vspace{0.2cm}

{\bf Claim \ref{4.2}.1:} \ {\em The set $\eta$ is analytic.}

{  Proof of Claim \ref{4.2}.1:} Since the maps (\ref{14})-(\ref{20}) are all Borel measurable when restricted to $X\setminus N$, $\eta$ is a Borel set. By Theorem 14.3.5 in \cite{KR1}, $\eta$ is an analytic set.

The proof of Claim \ref{4.2}.1 is complete.

\vspace{0.2cm}

{\bf Claim \ref{4.2}.2 :} \ {\em Let $\pi$ be the projection of $X \times \mathcal{C}$ onto $X$. Then $\pi(\eta)=X \setminus N$.}

{  Proof of Claim \ref{4.2}.2:} Let $(s, (E_{11}, E_{12}, \dots, E_{1n}, E_{21}, \dots, E_{2n}, \dots, E_{nn}))  $ be an element in $\eta$. Since $\{ a'_{r}(s): r \in \mathbb{N} \}$ is strong operator dense in the unit ball $(\mathcal{M}'_{s})_{1}$ of $\mathcal{M}'_{s}$ for every $s \in X$, conditions   (ii) and  (iii) are equivalent to the statement that $\{U_{s}^{*}E_{ij}U_{s}: 1\le i,j\le n\}  $ is a system of matrix units in $\mathcal M_s$.

 Note $X_0$ is a $\mu$-null subset   of $X$  and $\{p_{i,s}\in \mathcal M_s: 2\le i\le n, s\in X\}$  is a family of projections  satisfying $  p(s), p_{2,s}, \dots, p_{n,s}  $  are $n$ orthogonal equivalent projections in $\mathcal{M}_{s}$ for all $s\in X\setminus X_0$.
Hence for each $s\in X\setminus X_0$, there is a system of matrix units $\{E_{ij}: 1\le i, j\le n\}$ such that (a) $E_{11}=p(s)$ and (b) $E_{ii}=p_{i,s}$ for each $2\le i\le n$. From arguments in the preceding paragraph, it follows that
$$(s, (E_{11}, E_{12}, \dots, E_{1n}, E_{21}, \dots, E_{2n}, \dots, E_{nn}))$$
satisfies conditions (i), (ii) and (iii) for every $s \in X\setminus N$. Therefore $\pi(\eta)=X \setminus N$.

The proof of Claim \ref{4.2}.2 is complete.

\vspace{0.2cm}

\noindent (Continue the proof of Lemma \ref{4.2}) By Claim \ref{4.2}.1 and Claim \ref{4.2}.2, $\eta$ is analytic and $\pi(\eta)= X \setminus N$. It follows from Theorem 14.3.6 in \cite{KR1} that, there is a measurable map
$$s \to (E_{11,s}, E_{12,s}, \dots, E_{1n,s}, E_{21,s}, \dots, E_{2n,s}, \dots, E_{nn,s})$$
from $X\setminus N$ to $\mathcal{C}$ such that
$$(s, (E_{11,s}, E_{12,s}, \dots, E_{1n,s}, E_{21,s}, \dots, E_{2n,s}, \dots, E_{nn,s})) \in \eta$$
for $s \in X\setminus N$. Defining $E_{ij,s}=0$ for $s \in N$ and $ i,j \in \{1, 2, \dots, n\}$, we get {\em a measurable map
$$s \to (E_{11,s}, E_{12,s}, \dots, E_{1n,s}, E_{21,s}, \dots, E_{2n,s}, \dots, E_{nn,s})$$
from $X$ to $\mathcal{C}$ satisfying
\begin{equation}(s, (E_{11,s}, E_{12,s}, \dots, E_{1n,s}, E_{21,s}, \dots, E_{2n,s}, \dots, E_{nn,s})) \in \eta \qquad \text {for almost every $s\in X$}.\label{myequ 1}\end{equation}}

For all $s\in X$, all $i, j \in \{1, 2, \dots, n\}$ and two vectors $x, y \in H$, we have
$$\langle U_{s}^{*} E_{ij,s} U_{s}x(s), y(s) \rangle =\langle E_{ij,s}U_{s}x(s), U_{s}y(s) \rangle.$$
Thus from (\ref{myequ 1}), it follows that the map $s \to \langle U_{s}^{*} E_{ij,s} U_{s}x(s), y(s) \rangle$ is measurable. Since
$$|\langle U_{s}^{*} E_{ij,s} U_{s}x(s), y(s) \rangle| \leq \Vert x(s) \Vert \Vert y(s) \Vert,$$
the map $s \to \langle U_{s}^{*} E_{ij,s} U_{s}x(s), y(s) \rangle$ is integrable. By Definition 14.1.1 in \cite{KR1}, $U_{s}^{*}E_{ij,s}U_{s}x(s) = (p_{ij}x)(s)$ almost everywhere for some $p_{ij}x \in H$. Therefore $p_{ij}(s)=U_{s}^{*} E_{ij,s} U_{s}$ for almost every $s \in X$. It follows from condition (iii) that $p_{ij} \in \mathcal{M}$. For any $i \in \{ 2, 3, \dots, n \}$, let $p_{i}=p_{ii}$. From $p_{ii}(s)=U_{s}^{*}E_{ii,s}U_{s}$, it follows from (i) and (ii) that $p_{2}, p_{3}, \dots, p_{n}$ are the required projections.
\end{proof}

\begin{lemma} \label{4.3}
Let $\mathcal{M}$ be a type  II$_{1}$ von Neumann algebra acting on
a separable Hilbert space $H$. Let $ \mathcal{M}= \int_{X} \bigoplus
\mathcal{M}_{s} d\mu$ and $H=\int_{X} H_{s} d \mu$ be the direct
integral decompositions of $\mathcal{M}$ and $H$ as in Lemma \ref{2.4}.
Suppose that $\mathcal{M}_{s}$ is a type  II$_{1}$ factor with a trace
$\tau_{s}$ for every $s \in X$. Let $n \in \mathbb{N}$ and
$\epsilon$ be a positive number. Let $x=(x_{ij})$ be an element in
$M_{n}(\mathcal{M})$ such that, for every $s \in X$,
$\sum\limits_{i,j=1}^{n}\Vert x_{ij}(s) \Vert_{2,s}^{2} <
\epsilon^{2}$, where the $\Vert \cdot \Vert_{2,s}$ is the $2$-norm
on $\mathcal{M}_{s}$ induced by $\tau_{s}$. Then there are $n$
equivalent orthogonal projections $\{p_{1}, p_{2}, \dots, p_{n}\}$
in $\mathcal{M}$ and $n$ equivalent orthogonal projections ${q_{1},
q_{2}, \dots, q_{n}}$ in $\mathcal{M}$ such that
$$x_{ij}=p_{1}x_{ij}q_{1}+h_{ij}$$
with $\Vert h_{ij} \Vert \leq 3 \epsilon \sqrt{n}$ for every $i, j
\in \{ 1, 2, \dots, n \}$.
\end{lemma}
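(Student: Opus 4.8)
The plan is to capture the portion of each $x_{ij}$ that has large operator norm inside a single pair of small corners chosen uniformly in $i,j$, and then to show that what is left over is small in norm precisely because each $x_{ij}$ is small in $2$-norm on each fibre. Concretely, I would form the two positive elements $b=\sum_{i,j=1}^{n}x_{ij}^{*}x_{ij}$ and $b'=\sum_{i,j=1}^{n}x_{ij}x_{ij}^{*}$ of $\mathcal{M}$, and take $q_{1}$ and $p_{1}$ to be the truncation (spectral) projections $q_{1}=\chi_{(n\epsilon^{2},\infty)}(b)$ and $p_{1}=\chi_{(n\epsilon^{2},\infty)}(b')$ in $\mathcal{M}$. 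The threshold $n\epsilon^{2}$ is chosen to balance two competing requirements: on each fibre these projections should have trace strictly below $1/n$ (so that Lemma \ref{4.2} can be applied), while the complementary corners should see each $x_{ij}$ only through a part of operator norm at most $\epsilon\sqrt{n}$.

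For the trace bound I would use that the direct integral respects Borel functional calculus, so that $b(s)=\sum_{i,j}x_{ij}(s)^{*}x_{ij}(s)$ and $q_{1}(s)=\chi_{(n\epsilon^{2},\infty)}(b(s))$ for almost every $s$. The hypothesis gives $\tau_{s}(b(s))=\sum_{i,j}\Vert x_{ij}(s)\Vert_{2,s}^{2}<\epsilon^{2}$, and Markov's inequality applied in the factor $\mathcal{M}_{s}$ then yields $\tau_{s}(q_{1}(s))\le \tau_{s}(b(s))/(n\epsilon^{2})<1/n$ for almost every $s$; symmetrically $\tau_{s}(p_{1}(s))<1/n$. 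Since $\mathcal{M}_{s}$ is a type II$_{1}$ factor, a projection of trace strictly less than $1/n$ can be completed to $n$ mutually orthogonal equivalent projections inside $\mathcal{M}_{s}$, so the fibrewise hypothesis of Lemma \ref{4.2} holds for both $p_{1}$ and $q_{1}$. Applying that lemma twice produces the required families $\{p_{1},p_{2},\dots,p_{n}\}$ and $\{q_{1},q_{2},\dots,q_{n}\}$ of orthogonal equivalent projections in $\mathcal{M}$ having $p_{1}$, respectively $q_{1}$, as a member.

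For the norm bound I would observe that, since $q_{1}$ commutes with $b$, the operator $(I-q_{1})b(I-q_{1})=b(I-q_{1})$ has norm at most $n\epsilon^{2}$, and that $\sum_{i,j}(x_{ij}(I-q_{1}))^{*}(x_{ij}(I-q_{1}))=(I-q_{1})b(I-q_{1})\le n\epsilon^{2}I$. As each summand is positive it is dominated by $n\epsilon^{2}I$, whence $\Vert x_{ij}(I-q_{1})\Vert\le \epsilon\sqrt{n}$; symmetrically $\Vert (I-p_{1})x_{ij}\Vert\le \epsilon\sqrt{n}$. Writing $h_{ij}=x_{ij}-p_{1}x_{ij}q_{1}=(I-p_{1})x_{ij}+p_{1}x_{ij}(I-q_{1})$ and using $\Vert p_{1}\Vert\le 1$ then gives $\Vert h_{ij}\Vert\le \epsilon\sqrt{n}+\epsilon\sqrt{n}=2\epsilon\sqrt{n}\le 3\epsilon\sqrt{n}$, which is the asserted estimate (indeed with room to spare).

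The genuinely delicate point here is not the elementary truncation estimates but the passage between $\mathcal{M}$ and its fibres. One must justify that the truncation projections decompose as $q_{1}(s)=\chi_{(n\epsilon^{2},\infty)}(b(s))$ almost everywhere, so that the fibrewise trace estimate is legitimate, and one must verify the measurable extendability hypothesis of Lemma \ref{4.2} for $p_{1}$ and $q_{1}$. Both are handled by the direct-integral apparatus already recorded in Lemma \ref{2.4}, Lemma \ref{2.8} and Remark \ref{2.6}, together with the compatibility of Borel functional calculus with the direct integral decomposition; I expect this bookkeeping, rather than the norm estimate itself, to be where care is genuinely required.
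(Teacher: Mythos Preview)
Your argument is correct and follows the same truncation idea (Pisier's trick) as the paper. The only real difference is one of bookkeeping: the paper first approximates $a=(\sum x_{ij}x_{ij}^{*})^{1/2}$ in norm by a finite-spectrum element $\sum_{i}\lambda_{i}P_{i}$ and takes $p_{1}=\chi_{[2\epsilon\sqrt{n},\infty)}(\sum_{i}\lambda_{i}P_{i})$, so that the fibrewise identity $p_{1}(s)=\chi_{[2\epsilon\sqrt{n},\infty)}(\sum_{i}\lambda_{i}P_{i}(s))$ is immediate (the characteristic function is a polynomial on a finite spectrum). You instead take the spectral projection of $b=\sum x_{ij}^{*}x_{ij}$ directly and invoke the general compatibility of Borel functional calculus with the direct integral; this is legitimate and, as you note, is exactly the ``delicate point'' you flagged. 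Your version has two incidental advantages: working with $b$ rather than $\sqrt{b}$ avoids a square-root step, and your estimate $\Vert h_{ij}\Vert\le 2\epsilon\sqrt{n}$ is sharper than the paper's $3\epsilon\sqrt{n}$ (the paper loses a little from the finite-spectrum approximation). The paper's route, on the other hand, keeps the direct-integral verification entirely elementary by reducing to polynomials.
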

\begin{proof}
We will use Pisier's trick in \cite{Pi4} to prove the result.

Take $a = (\sum\limits_{i,j=1}^{n} x_{ij}x_{ij}^{*})^{1/2}$. Let
$\mathcal{A}$ be the unital C$^*$-subalgebra of  $\mathcal{M}$
generated by $\{ x_{ij}: 1 \le i, j \le n \}$. By Theorem 14.1.13 in
\cite{KR1} and the fact that $\mathcal{A}$ is a separable
C$^*$-algebra, we know that $\mathcal{A}=\int_{X} \bigoplus \mathcal{A}_{s} d \mu$ and the map $y \to y(s)$ from $\mathcal{A}$ to
$\mathcal{A}_{s}$ is a unital $*$-homomorphism for almost every $s
\in X$.  It follows that $a(s)=(\sum\limits_{i,j=1}^{n}
x_{ij}(s)x_{ij}(s)^{*})^{1/2}$ for almost every $s \in X$. Without
loss of generality, we might assume $a(s)=(\sum\limits_{i,j=1}^{n}
x_{ij}(s)x_{ij}(s)^{*})^{1/2}$ for every $s \in X$. Since
$\sum\limits_{i,j=1}^{n}\Vert x_{ij}(s) \Vert_{2,s}^{2} <
\epsilon^{2}$, we know
\begin{eqnarray}
\Vert a(s) \Vert_{2,s} < \epsilon, \qquad \text{ for every $s \in X$.} \label{21}
\end{eqnarray}

Note that $a$ is a positive element in $\mathcal M$. By functional
calculus, we know there exist an positive integer $k$, a family of
positive numbers $\lambda_1,\ldots, \lambda_k$ and orthogonal
projections $P_1,\ldots, P_k$ in $\mathcal M$ such that
\begin{eqnarray}
\|a-\sum_{i=1}^k \lambda_iP_i\|<\epsilon\qquad \text{ and } \qquad
\|a^2-\sum_{i=1}^k \lambda_i^2P_i\|<\epsilon. \label{myequ 4.3.1}
\end{eqnarray}
Thus $P_1(s),\ldots, P_k(s)$ are orthogonal projections in $\mathcal
M_s$ and
\begin{eqnarray}
\|a(s)-\sum_{i=1}^k \lambda_iP_i(s)\|<\epsilon,
 \label{myequ 4.3.2}
\end{eqnarray}
for  almost every $s\in X.$

From (\ref{21}) and (\ref{myequ 4.3.2}), it follows that
\begin{eqnarray}
\| \sum_{i=1}^k \lambda_iP_i(s)\|_{2,s}<2\epsilon, \qquad \text{for
almost every $s\in X.$}
 \label{myequ 4.3.3}
\end{eqnarray}

\noindent Denote \begin{eqnarray} p_{1}=\chi_{[2\epsilon \sqrt{n},
\infty)}(\sum_{i=1}^k \lambda_iP_i).\label{myequ 4.3.4}
\end{eqnarray}We have
\begin{equation}p_{1}(s)=\left (\chi_{[2\epsilon \sqrt{n},
\infty)}(\sum_{i=1}^k \lambda_iP_i)\right )(s)= \chi_{[2\epsilon
\sqrt{n}, \infty)}(\sum_{i=1}^k \lambda_iP_i(s)), \qquad \text{for
almost every $s\in X.$}\label{myequ 4.3.4}\end{equation}
 By (\ref{myequ 4.3.3}) and (\ref{myequ 4.3.4}),
$$\tau_{s}(p_{1}(s)) < 1/n, \qquad \text{for
almost every $s\in X.$}$$
Since $\mathcal{M}_{s}$ is
a type II$_{1}$ factor for every $s \in X$, there exist  projections $p_{2,s}, p_{3,s},
\dots, p_{n,s}$ in $\mathcal{M}_{s}$ such that $\{p_{1}(s), p_{2,s},
\dots, p_{n,s}\}$ are orthogonal equivalent projections in
$\mathcal{M}_{s}$. By Lemma \ref{4.2}, there exist projections
$p_{2}, p_{3}, \dots, p_{n}$ in $\mathcal{M}$ such that $p_{1},
p_{2}, \dots, p_{n}$ are $n$ orthogonal equivalent projections in
$\mathcal{M}$.

Take $b=(\sum\limits_{i,j=1}^{n} x_{ij}^{*}x_{ij})^{1/2}$. Similarly, we
assume that $ b(s)=(\sum\limits_{i,j=1}^{n}
x_{ij}(s)^{*}x_{ij}(s))^{1/2}$ for every $s \in X$.

Again, note that $b$ is a positive element in $\mathcal M$. By
functional calculus, we know there exist an positive integer $k'$, a
family of positive numbers $\alpha_1,\ldots, \alpha_{k'}$ and
orthogonal projections $Q_1,\ldots, Q_{k'}$ in $\mathcal M$ such that
\begin{eqnarray}
\|b-\sum_{i=1}^{k'} \alpha_iQ_i\|< \epsilon \qquad \text{and} \qquad \Vert b^2-\sum_{i=1}^{k'}\alpha_{i}^{2} Q_i \Vert < \epsilon . \label{myequ 4.3.6}
\end{eqnarray}
Without loss of generality, we can further assume that
\begin{eqnarray}
\|b(s)-\sum_{i=1}^{k'} \alpha_iQ_i(s)\|< \epsilon, \qquad \forall s\in X.
 \label{myequ 4.16}
\end{eqnarray}
By a similar argument as the last paragraph, we obtain a spectral
projection
$$q_{1}= \chi_{[2\epsilon \sqrt{n}, \infty)}(\sum_{i=1}^{k'}
\alpha_iQ_i)$$ and projections $q_{2}, q_{3}, \dots, q_{n}$ such
that $q_{1}, q_{2}, \dots, q_{n}$ are $n$ orthogonal equivalent
projections in $\mathcal{M}$ and $q_{1}(s)=\chi_{[2\epsilon
\sqrt{n}, \infty)}(\sum_{i=1}^{k'} \alpha_iQ_i(s))$ for almost $s \in X$.

Take $h_{ij}=x_{ij}-p_{1}x_{ij}q_{1}$. We may assume that $
h_{ij}(s)=x_{ij}(s)-p_{1}(s)x_{ij}(s)q_{1}(s)$ for every $s \in X$.
In the following we show that $\Vert h_{ij} \Vert \leq 3 \epsilon
\sqrt{n}$. By Proposition 14.1.9 in \cite{KR1}, $\Vert h_{ij} \Vert$
is the essential bound of $\{ \Vert h_{ij}(s) \Vert: s \in X \}$. So
it suffices to show that $\Vert h_{ij}(s) \Vert \leq 3 \epsilon
\sqrt{n}$ for almost every $s \in X$. For every $s \in X$,
\begin{eqnarray}
\Vert h_{ij}(s) \Vert=\Vert x_{ij}(s)-p_{1}(s)x_{ij}(s)q_{1}(s) \Vert \leq \Vert (1-p_{1}(s))x_{ij}(s) \Vert + \Vert p_{1}(s)x_{ij}(s) (1-q_{1}(s)) \Vert. \label{24}
\end{eqnarray}
We have, from (\ref{myequ 4.3.1}) and (\ref{myequ 4.3.6}),
\begin{eqnarray}
&& \Vert (1-p_{1}(s))x_{ij}(s) \Vert^{2}  \notag \\
&=&\Vert (1-p_{1}(s))x_{ij}(s)x_{ij}(s)^{*}(1-p_{1}(s)) \Vert^{2} \notag \\
&\leq&\Vert (1-p_{1}(s))a(s)^{2}(1-p_{1}(s)) \Vert^{2} \notag \\
&\leq&\Vert (1-p_{1}(s))\left(\sum_{i=1}^k \lambda_i^2P_i+
(a(s)^{2}-\sum_{i=1}^k
\lambda_i^2P_i)\right )(1-p_{1}(s)) \Vert^{2} \notag \\
 &\leq& 4\epsilon^{2} n + \epsilon^2\le 5\epsilon^{2} n\label{extra 1}
\end{eqnarray}
and
\begin{eqnarray}
&&\Vert p_{1}(s)x_{ij}(s) (1-q_{1}(s)) \Vert^{2} \notag \\
&\leq& \Vert x_{ij}(s) (1-q_{1}(s)) \Vert^{2} \notag \\
&=& \Vert (1-q_{1}(s))x_{ij}(s)^{*}x_{ij}(s)(1-q_{1}(s))\Vert \notag \\
&\leq& 5\epsilon^{2} n, \label{extra 2}
\end{eqnarray}
for almost every $s\in X$.

It follows from (\ref{24}), (\ref{extra 1}) and (\ref{extra 2})  that for any $i, j \in \{ 1, 2, \dots, n \}$,
$$\Vert h_{ij} \Vert \leq 3 \epsilon \sqrt{n}.$$
The proof is complete.
\end{proof}

Now we are ready to prove the following result as a generalization
of Theorem 13 in \cite{Pi4}.

\begin{theorem} \label{mainthm1}
Let $\mathcal{M}$ be a countably decomposable type II$_{1}$ von Neumann algebra. If $\mathcal M$ has Property $\Gamma$, then
$d(\mathcal{M}) \leq 5$.
\end{theorem}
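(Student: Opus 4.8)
The plan is to bound the similarity \emph{length} $\ell(\mathcal M)$ by $5$ and invoke Pisier's theorem that similarity degree and length coincide when finite (\cite{Pi2}); concretely, I would produce a constant $C$, independent of $N$ and of $x$, with $\|x\|_{(5,\mathcal M)}\le C\|x\|$ for every $x\in M_N(\mathcal M)$ (Definition \ref{4.1}). This is the same target as in Pisier's Theorem 13 of \cite{Pi4}, and the overall strategy is to transport his factor-level argument to the present setting by running it fiberwise in a direct integral and assembling the pieces measurably.

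First I would reduce to the separable-predual case. Since the length estimate concerns one matrix $x\in M_N(\mathcal M)$ at a time, its finitely many entries form a finite set $F\subseteq\mathcal M$; by Lemma \ref{mylemma 3.5.2} there is a type II$_1$ von Neumann subalgebra $\mathcal M_1$ with separable predual and Property $\Gamma$ satisfying $F\subseteq\mathcal M_1\subseteq\mathcal M$. Any factorization of $x$ inside $M_N(\mathcal M_1)$ is a factorization inside $M_N(\mathcal M)$ with the same norms, so it suffices to bound $\|x\|_{(5,\mathcal M_1)}$ by a constant not depending on $\mathcal M_1$. Thus I may assume $\mathcal M$ has separable predual, decompose $\mathcal M=\int_X^\oplus\mathcal M_s\,d\mu$ as in Lemma \ref{2.4}, and assume that each $\mathcal M_s$ is a type II$_1$ factor with Property $\Gamma$.

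Next, given $x=(x_{ij})$ with $\|x\|\le 1$, I would use Property $\Gamma$ to split $x$ into a block-diagonal part and an off-diagonal part of small $2$-norm \emph{on every fiber}. On each factor $\mathcal M_s$, Dixmier's form of $\Gamma$ yields $n$ orthogonal equivalent projections almost commuting (in $\|\cdot\|_{2,s}$) with $\{x_{ij}(s)\}$; the tuples $(s,\text{projection system})$ satisfying the orthogonality, equivalence, and approximate-commutation relations form an analytic set whose projection onto $X$ is, almost everywhere, all of $X$, so a measurable selection (as in the proof of Lemma \ref{4.2}, via Theorems 14.3.5--14.3.6 of \cite{KR1}) produces a single decomposable family $e_1,\dots,e_n\in\mathcal M$ of orthogonal equivalent projections with a \emph{uniform} fiber bound on the commutators. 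Writing $P_k=I_N\otimes e_k$, I set $x_{\mathrm{diag}}=\sum_k P_kxP_k$ and $z=x-x_{\mathrm{diag}}$; the uniform fiber bound makes $\sum_{ij}\|z_{ij}(s)\|_{2,s}^2$ as small as desired for every $s$, which is precisely the hypothesis of Lemma \ref{4.3}. Applying Lemma \ref{4.3} to $z$ realizes it, up to an error of arbitrarily small operator norm, as a compression $p_1 z q_1$ by projections of small trace sitting inside systems of equivalent orthogonal projections; the equivalence lets this compression be factored through matrix amplifications with controlled scalar-matrix norms, exactly as in Pisier's length-$3$ estimate. The block-diagonal part $x_{\mathrm{diag}}$ is then handled by the same procedure applied to the mutually equivalent corners $e_k\mathcal M e_k$, and the bookkeeping of the factorization degrees is identical to that of \cite{Pi4}, producing the value $5$.

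The hard part, and the reason for the measure-theoretic apparatus of Lemmas \ref{2.4}, \ref{4.2}, and \ref{4.3}, is the \emph{uniformity over fibers}: ordinary Property $\Gamma$ for $\mathcal M$ provides only smallness of commutators in the single global trace $\rho=\int_X\rho_s\,d\mu$, that is, smallness of $\int_X\sum_{ij}\|z_{ij}(s)\|_{2,s}^2\,d\mu$ \emph{on average}, whereas Lemma \ref{4.3} demands that the fiber quantity be small for almost every $s$ simultaneously, and a set of fibers of small measure cannot be discarded because an operator supported there still carries full operator norm. Overcoming this requires performing Pisier's factor construction fiberwise and gluing the choices into honest decomposable operators of $\mathcal M$ whose scalar matrices $\alpha_i$ are fiber-independent, which is exactly what the analytic-set and measurable-selection arguments deliver. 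Once the uniform factorization is in hand, the small-norm error terms are absorbed by a geometric iteration, giving $\ell(\mathcal M)\le 5$ and hence $d(\mathcal M)\le 5$.
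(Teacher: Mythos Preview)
Your proposal is correct and follows essentially the same route as the paper: reduce to separable predual via Lemma \ref{mylemma 3.5.2}, obtain $n$ orthogonal equivalent projections with \emph{fiberwise} $\|\cdot\|_{2,s}$-smallness of commutators, split $x$ as diagonal plus off-diagonal, bound the diagonal part in length $5$ via Pisier's Lemmas 5 and 14 of \cite{Pi4}, and handle the off-diagonal part through Lemma \ref{4.3} plus Pisier's compression trick. Two minor remarks: (i) the uniform fiberwise projections you propose to build by measurable selection are exactly what the paper invokes as Corollary 4.2 of \cite{QS}, so you have correctly identified both the obstacle and its resolution; (ii) the ``geometric iteration'' you mention is unnecessary, since for fixed $n$ the error term coming from $(h'_{ij})$ satisfies $\|(h'_{ij})\|_{(5,\mathcal M)}\le n\|(h'_{ij})\|\le 3n^{7/2}\epsilon$, and as $\epsilon>0$ is arbitrary this contribution simply vanishes in the infimum defining $\|x\|_{(5,\mathcal M)}$, yielding the universal bound $\|x\|_{(5,\mathcal M)}\le 3$ directly.
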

\begin{proof}

 To show that $d(\mathcal{M})=l(\mathcal M) \leq 5$, it suffices to prove that there
exists a positive scalar $k$ such that for any $n \in \mathbb{N}$
and any $x=(x_{ij}) \in M_{n}(\mathcal{M})$,
$$\Vert x \Vert_{(5,\mathcal M)} \leq k \Vert x \Vert.$$

In the following we fix $n \in \mathbb{N}$ and $x=(x_{ij}) \in M_{n}(\mathcal{M})$.  We may assume that $\Vert x \Vert=1$.

Let $F=\{x_{ij}\}_{i,j=1}^n$ be a finite subset of $\mathcal M$. Note  $\mathcal M$ is a countably decomposable type II$_{1}$ von Neumann algebra with  Property $\Gamma$. From Lemma \ref{mylemma 3.5.2}, it follows that there exists a von Neumann algebra $\mathcal M_1$ with separable predual and with Property $\Gamma$ such that $F\subseteq \mathcal M_1\subseteq \mathcal M$. It is easy to see from Definition \ref{4.1} that
$$\Vert x \Vert_{(5,\mathcal M)} \leq \Vert x \Vert_{(5,\mathcal M_1)}.$$
Hence, to prove the theorem, it suffices to show that there exists a universal constant $k>0$ such that
$$\Vert x \Vert_{(5,\mathcal M_1)} \leq k .$$
Replacing $\mathcal M$ by $\mathcal M_1$, we can assume that $\mathcal{M}$ has separable predual and  Property $\Gamma$.

Since $\mathcal{M}$ has separable predual, by Proposition A.2.1 in
\cite{JS}, there is a faithful normal representation $ \pi$ of
$\mathcal{M}$ on a separable Hilbert space $H$. Replacing
$\mathcal{M}$ by $\pi(\mathcal{M})$ if necessary, we assume that
$\mathcal{M}$ is acting on a separable Hilbert space.

Let $\mathcal{Z}_{\mathcal{M}}$ be the center of $\mathcal{M}$. Let
$ \mathcal{M}= \int_{X} \bigoplus \mathcal{M}_{s} d \mu$ and $H=
\int_{X} \bigoplus H_{s} d\mu$ be the direct integral decompositions
of $\mathcal{M}$ and $H$ relative to $\mathcal{Z}_{\mathcal{M}}$ as in
Lemma \ref{2.4}. By Proposition 3.12 in \cite{QS}, we may assume
that $\mathcal{M}_{s}$ is a type II$_{1}$ factor with Property $\Gamma$ for every $s \in
X$. For any $\epsilon>0$, applying Corollary 4.2 in \cite{QS}, we
obtain $n$ orthogonal equivalent projections $p_{1}, p_{2}, \dots,
p_{n}$ in $\mathcal{M}$ summing to $I$ such that
\begin{eqnarray}
\sum\limits_{i, j=1}^{n} \Vert (x_{ij}-\sum\limits_{m=1}^{n}
p_{m}x_{ij}p_{m})(s) \Vert_{2,s}^{2} < \epsilon^{2}, \label{25}
\end{eqnarray}
where $\Vert \cdot \Vert_{2,s}$ is the $2$-norm induced by the
unique trace $\tau_{s}$ on $\mathcal{M}_{s}$ for   $s \in X$ almost
everywhere.

Decompose
\begin{eqnarray}
x=(\sum\limits_{m=1}^{n} p_{m}x_{ij}p_{m})+ (x_{ij}-\sum\limits_{m=1}^{n} p_{m}x_{ij}p_{m}). \label{26}
\end{eqnarray}

Let $x_{m}=(p_{m}x_{ij}p_{m})$ for $1\le m\le n$. Therefore $x_{m}=
(1 \otimes p_{m}) x (1 \otimes p_{m})$ for $1\le m\le n$. By
applying Lemma 5 in \cite{Pi4}, for each $m$,
$$\Vert x_{m} \Vert_{(3,\mathcal M)} \leq 1.$$
Since $(\sum\limits_{m=1}^{n}
p_{m}x_{ij}p_{m})=(\sum\limits_{m=1}^{n} p_{m} (p_{m}x_{ij}p_{m})
p_{m})$, by Lemma 14 in \cite{Pi4},
\begin{eqnarray}
\Vert (\sum\limits_{m=1}^{n} p_{m}x_{ij}p_{m}) \Vert_{(5,\mathcal
M)} \leq 1. \label{extra 3}
\end{eqnarray}

Let $x'_{ij}=x_{ij}-\sum\limits_{m=1}^{n} p_{m}x_{ij}p_{m}$ for each
$i, j \in \{ 1, 2, \dots, n \}$. It follows that $\Vert (x'_{ij})
\Vert \leq 2$. By Lemma \ref{4.2}, (\ref{25}) implies that there exist $n$
orthogonal equivalent projections $p'_{1}, p'_{2}, \dots, p'_{n}$
and $n$ orthogonal equivalent projections $q'_{1}, q'_{2}, \dots,
q'_{n}$ such that
$$x'_{ij}=p'_{1}x'_{ij}q'_{1}+ h'_{ij}$$
with $\Vert h'_{ij} \Vert \leq 3\epsilon \sqrt{n}$. By Lemma 5 in
\cite{Pi4},
\begin{eqnarray}
\Vert (p'_{1}x'_{ij}q'_{1}) \Vert_{(5,\mathcal M)} \leq \Vert
(p'_{1}x'_{ij}q'_{1}) \Vert_{(3,\mathcal M)} =\Vert (1 \otimes
p'_{1}) (x'_{ij}) (1 \otimes q'_{1})  \Vert_{(3,\mathcal M)} \leq 2.
\label{extra 4}
\end{eqnarray}

Since $\Vert h'_{ij} \Vert \leq 3\epsilon \sqrt{n}$ for every $i, j
\in \{ 1, 2, \dots, n \}$, we obtain
\begin{eqnarray}
\Vert (h'_{ij}) \Vert_{(5,\mathcal M)} &\leq& n \Vert (h'_{ij}) \Vert \notag \\
&\leq& n^{3} \sup \{ \Vert h'_{ij} \Vert: i, j \in \{ 1, 2, \dots, n\} \} \notag \\
&\leq& 3n^{3} \sqrt{n} \epsilon. \label{extra 5}
 \end{eqnarray}

By (\ref{26}), (\ref{extra 3}), (\ref{extra 4}) and (\ref{extra 5}),
$\Vert x \Vert_{(5,\mathcal M)} \leq 3+ 3n^{3} \sqrt{n} \epsilon$.
Since $\epsilon$ was arbitrarily chosen, we obtain that  $\Vert x
\Vert_{(5,\mathcal M)} \leq 3$. Therefore $$d(\mathcal{M}) \leq 5.$$
The proof is complete.
\end{proof}

\section{Similarity degree of a class of C$^*$-algebras}
In this section we will deal with the similarity degree of a class
of C$^*$-algebras with certain properties. We will conclude
that $d(\mathcal{A}) \leq 3$ if $\mathcal{A}$ is a separable unital
$\mathcal{Z}$-stable C$^*$-algebra.

Before we prove the main theorem of this section, we give the following lemma.

\begin{lemma} \label{5.1}
Let $\mathcal{M}$ be a von Neumann algebra with the type decomposition
$$\mathcal{M}= \mathcal{M}_{1} \oplus \mathcal{M}_{c_{1}} \oplus \mathcal{M}_{c_{\infty}} \oplus \mathcal{M}_{\infty},$$
where $\mathcal{M}_{1}$ is a type  I  von Neumann algebra,
$\mathcal{M}_{c_{1}}$ is a type  II$_{1}$ von Neumann algebra,
$\mathcal{M}_{c_{\infty}}$ is a type II$_{\infty}$ von Neumann
algebra and $\mathcal{M}_{\infty}$ is a type  III  von Neumann
algebra. Suppose  $\mathcal{M}_{c_{1}}$ has finite similarity degree. If $\phi$ is a bounded
unital representation of $\mathcal M$ on a Hilbert space $H$, which
is continuous from $\mathcal M$, with the topology $\sigma(\mathcal
M,\mathcal M_\sharp)$, to $B(H),$ with the topology
$\sigma(B(H),B(H)_\sharp)$, then $\phi$ is completely bounded.
\end{lemma}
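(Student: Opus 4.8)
The plan is to split $\mathcal M$ into its four central summands, prove complete boundedness of the restriction of $\phi$ to each, and reassemble. Write $z_1,z_2,z_3,z_4$ for the central projections onto $\mathcal M_1,\mathcal M_{c_1},\mathcal M_{c_\infty},\mathcal M_\infty$, so that the $z_i$ are mutually orthogonal with $z_1+z_2+z_3+z_4=I$. Since $\phi$ is a bounded unital homomorphism, the $\phi(z_i)$ are mutually commuting idempotents in $B(H)$ summing to $I$ with $\|\phi(z_i)\|\le\|\phi\|$. Because each $z_i$ is central, for $a\in z_i\mathcal M$ one has $\phi(a)=\phi(z_i)\phi(a)\phi(z_i)$ and $\phi(a)\phi(z_j)=\phi(z_ja)=0$ for $j\ne i$; hence $\phi$ splits as a sum of bounded unital homomorphisms $\phi_i:=\phi|_{z_i\mathcal M}$ supported on the ranges of the $\phi(z_i)$, each inheriting normality from $\phi$. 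As there are only finitely many summands and the idempotents $\phi(z_i)$ are uniformly bounded, it suffices to show that each $\phi_i$ is completely bounded.

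For the type II$_\infty$ summand $\mathcal M_{c_\infty}$ and the type III summand $\mathcal M_\infty$ I would use that a properly infinite von Neumann algebra has no tracial state: if $1\sim e\sim 1-e$ for a projection $e$, any tracial state $\tau$ satisfies $\tau(1)=\tau(e)+\tau(1-e)=2\tau(1)$, forcing $\tau(1)=0$. Both summands are properly infinite, so by Haagerup's theorem in \cite{H1} every bounded unital homomorphism of them is completely bounded; in particular $\phi_3$ and $\phi_4$ are completely bounded (normality is not even needed here). For the type II$_1$ summand, the hypothesis that $\mathcal M_{c_1}$ has finite similarity degree means exactly that it has the similarity property, so $\phi_2$ is completely bounded with $\|\phi_2\|_{cb}\le k\,\|\phi_2\|^{d(\mathcal M_{c_1})}$.

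It remains to treat the type I summand $\mathcal M_1$, and this is where I expect the main difficulty. Splitting $\mathcal M_1$ by a further central projection into a properly infinite (type I$_\infty$) part and a finite type I part, the properly infinite part again has no tracial state and is handled by \cite{H1} as above. The finite type I part $\mathcal N=\bigoplus_n M_n(\mathcal Z_n)$, with each $\mathcal Z_n$ abelian, is the genuine obstacle: it carries tracial states, so \cite{H1} does not apply, and it is not nuclear in general (for instance $\prod_n M_n$ is non-exact), so the degree-$2$ bound for nuclear algebras cannot be invoked on all of $\mathcal N$ at once. Here normality of $\phi$ becomes essential, and I would argue blockwise. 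On each homogeneous summand $M_n(\mathcal Z_n)=M_n\otimes\mathcal Z_n$, the restriction of $\phi$ to the scalar matrix units $M_n$ is a bounded homomorphism of a full matrix algebra, hence similar to a $*$-representation by an invertible whose condition number is controlled by a function of $\|\phi\|$ alone; conjugating and using that abelian von Neumann algebras have similarity degree $2$ with a uniform constant should give $\|\phi|_{M_n(\mathcal Z_n)}\|_{cb}\le C\,\|\phi\|^{\beta}$ with $C,\beta$ independent of $n$. Normality then lets one pass from this uniform blockwise control to complete boundedness of $\phi|_{\mathcal N}$, since weak-$*$ continuity forces $\phi|_{\mathcal N}$ to be the weak-$*$ limit of its finite truncations, whose completely bounded norms are uniformly bounded. (Alternatively, one may simply cite the known fact that every type I von Neumann algebra has finite similarity degree.) Once each $\phi_i$ is completely bounded, the reduction of the first paragraph yields that $\phi$ itself is completely bounded, which completes the proof.
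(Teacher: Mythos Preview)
Your proposal is correct and follows the same overall strategy as the paper: split by type, handle the properly infinite summands via Haagerup's result (no tracial states), handle the II$_1$ summand by hypothesis, and exploit weak-$*$ continuity. The genuine difference is in your treatment of the type~I summand. The paper does not split $\mathcal{M}_1$ further into I$_\infty$ and finite parts, nor does it argue blockwise on the homogeneous summands $M_n(\mathcal{Z}_n)$. Instead it observes that any type~I von Neumann algebra contains a directed family of finite-dimensional unital C$^*$-subalgebras whose union is $\sigma(\mathcal{M},\mathcal{M}_\sharp)$-dense; the norm closure $\mathcal{A}$ of this union is AF, hence nuclear, hence has finite similarity degree. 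Then $\mathcal{B}=\mathcal{A}\oplus\mathcal{M}_{c_1}\oplus\mathcal{M}_{c_\infty}\oplus\mathcal{M}_\infty$ has finite similarity degree (finite direct sum of algebras that do), is weak-$*$ dense in $\mathcal{M}$, and normality of $\phi$ transfers complete boundedness from $\mathcal{B}$ to $\mathcal{M}$ in a single step at the end. This neatly sidesteps the non-nuclearity concern you raise about $\prod_n M_n$: rather than proving uniform cb bounds block by block and then invoking normality inside the type~I argument, the paper replaces all of $\mathcal{M}_1$ by a nuclear weak-$*$ dense subalgebra and defers the normality extension to the very last line. Your route works too (each $M_n(\mathcal{Z}_n)$ is nuclear, so the degree-$2$ bound applies with universal constant, and normality glues), but is more laborious; the parenthetical alternative you mention---citing that type~I von Neumann algebras have finite similarity degree---is essentially what the paper proves via this AF-subalgebra trick.
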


\begin{proof}
Observe that there is no trace on $\mathcal{M}_{c_\infty} \oplus \mathcal{M}_{\infty}$. It follows from Theorem 8 in \cite{Pi4} that $\mathcal{M}_{c_\infty} \oplus \mathcal{M}_{\infty}$ has finite similarity degree. From the hypothesis, $\mathcal{M}_{c_{1}}$ has finite similarity degree.

Since $\mathcal{M}_{1}$ is of type I, there is a directed collection
$\{ \mathcal{A}_{i}: i\in I \}$ of finite dimensional unital
C$^*$-subalgebras such that $\cup_{i \in I} \mathcal{A}_{i}$ is
 dense in $\mathcal{M}_{1}$ under the topology $\sigma(\mathcal{M},
\mathcal{M}_{\sharp})$. Let $\mathcal{A}$ be the norm closure of
$\cup_{i \in I} \mathcal{A}_{i}$. Then $\mathcal{A}$ is also dense
in $\mathcal{M}_{1}$ under the topology $\sigma(\mathcal{M},
\mathcal{M}_{\sharp})$. Each $\mathcal{A}_{i}$ is finite
dimensional, therefore $\mathcal{A}_{i}$ is nuclear for each $i \in
I$. It follows from Proposition 11.3.12 in \cite{KR1} that
$\mathcal{A}$ is nuclear. By Theorem 4.1 in \cite{C3}, $\mathcal{A}$
has finite similarity degree.

Now let $\mathcal{B} = \mathcal{A} \oplus \mathcal{M}_{c_{1}} \oplus
\mathcal{M}_{c_\infty} \oplus \mathcal{M}_{\infty}$. Then
$\mathcal{B}$ is dense in $\mathcal{M}$ under the topology
$\sigma(\mathcal M,\mathcal M_\sharp)$ and we can obtain that
$\mathcal{B}$ has finite similarity degree by Remark 6 in
\cite{Pi1}. Thus the restriction of $\phi$ to $\mathcal{B}$ is
completely bounded. Notice that $\phi$ is   continuous from
$\mathcal M$, with the topology $\sigma(\mathcal M,\mathcal
M_\sharp)$, to $B(H),$ with the topology $\sigma(B(H),B(H)_\sharp)$.
Now we conclude that $\phi$ is a completely bounded representation
of $\mathcal M$ on $H$.

\end{proof}

The following Theorem gives an estimation of $\Vert \phi \Vert_{cb}$ in the case that $\mathcal{M}_{c_{1}}$ is a countably decomposable type II$_1$ von Neumann algebra with Property $\Gamma$. The main idea  of the proof of the following theorem is based on the one used by Christensen   in \cite{C2}.

\begin{theorem} \label{mainthm2}
Let $\mathcal{M}$ be a von Neumann algebra with the type decomposition
$$\mathcal{M}= \mathcal{M}_{1} \oplus \mathcal{M}_{c_{1}} \oplus \mathcal{M}_{c_{\infty}} \oplus \mathcal{M}_{\infty},$$
where $\mathcal{M}_{1}$ is a type  I  von Neumann algebra,
$\mathcal{M}_{c_{1}}$ is a type  II$_{1}$ von Neumann algebra,
$\mathcal{M}_{c_{\infty}}$ is a type II$_{\infty}$ von Neumann
algebra and $\mathcal{M}_{\infty}$ is a type  III  von Neumann
algebra. Suppose  $\mathcal{M}_{c_{1}}$ is a countably decomposable
von Neumann algebra with  Property $\Gamma$. If $\phi$ is a bounded
unital representation of $\mathcal M$ on a Hilbert space $H$, which
is continuous from $\mathcal M$, with the topology $\sigma(\mathcal
M,\mathcal M_\sharp)$, to $B(H),$ with the topology
$\sigma(B(H),B(H)_\sharp)$, then $\phi$ is completely bounded and
$\Vert \phi \Vert_{cb} \leq \Vert \phi \Vert^{3}$.
\end{theorem}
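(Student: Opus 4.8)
The plan is to start from the fact, already supplied by Lemma \ref{5.1}, that $\phi$ is completely bounded; the only genuinely new assertion is the sharp estimate $\|\phi\|_{cb}\le\|\phi\|^{3}$. Since $\phi$ is unital, $\|\phi\|\ge 1$, so wherever a smaller exponent occurs I may freely enlarge it to $3$. I would first use that the central projections $z_{1},z_{c_{1}},z_{c_{\infty}},z_{\infty}$ of the type decomposition are carried by $\phi$ to commuting idempotents of sum $I$ with pairwise products $0$, so that $\phi$ is block diagonal over the four summands. On the type I part, the weak-$*$ dense nuclear C$^{*}$-subalgebra $\mathcal A$ from Lemma \ref{5.1} has similarity degree $2$; on $\mathcal M_{c_{\infty}}\oplus\mathcal M_{\infty}$ there are no tracial states, so Theorem 8 of \cite{Pi4} gives degree $3$. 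Normality of $\phi$, via Kaplansky density, lets these subalgebra estimates pass to the weak-$*$ closures. Thus the entire difficulty is concentrated on the type II$_{1}$ summand $\mathcal M_{c_{1}}$.

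For $\mathcal M_{c_{1}}$ I would follow the method of Christensen \cite{C2}. By Lemma \ref{mylemma 3.5.2} any finite family of entries lies inside a type II$_{1}$ subalgebra with separable predual and Property $\Gamma$, so I may reduce to the separable-predual case and then, via Lemma \ref{2.4} and Proposition 3.12 of \cite{QS}, to a direct integral $\int_{X}^{\oplus}\mathcal M_{s}\,d\mu$ of type II$_{1}$ factors each having Property $\Gamma$. The crucial use of normality appears at this point: Property $\Gamma$, in the Murray--von Neumann form of Proposition \ref{3.5.1}, yields for any finite set and any $\epsilon$ a family of orthogonal equivalent projections $p_{1},\dots,p_{n}$ of sum $I$ that almost commute with that set, and because $\phi$ is weak-$*$ to weak-$*$ continuous the images $\phi(p_{m})$ inherit this asymptotic centrality relative to $\phi(\mathcal M_{c_{1}})$. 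This transfer is exactly what is unavailable for a merely bounded representation, and is the reason Theorem \ref{mainthm1} stopped at $5$ whereas here one can reach $3$.

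Given $x=(x_{ij})\in M_{n}(\mathcal M_{c_{1}})$ with $\|x\|=1$, I would split $x=\sum_{m}p_{m}xp_{m}+\bigl(x-\sum_{m}p_{m}xp_{m}\bigr)$ as in Theorem \ref{mainthm1}, invoking Lemma \ref{4.3} (and the selection Lemma \ref{4.2}) to make the off-diagonal remainder small in norm. Applying $\phi^{(n)}$, the remainder contributes a term bounded by $\|\phi\|$ times a quantity tending to $0$, which disappears in the limit $\epsilon\to 0$; this limiting step is what will let the final constant be exactly $1$ rather than the additive constant $3$ produced in Theorem \ref{mainthm1}. For the diagonal part I would use Christensen's refinement: the equivalences $p_{1}\sim p_{m}$, implemented by partial isometries in $\mathcal M_{c_{1}}$ (equivalently, the cyclic-group average by the unitary $u=\sum_{m}\lambda^{m-1}p_{m}$ with $\lambda=e^{2\pi i/n}$, exactly as in Part I of Proposition \ref{3.5}), realize $\sum_{m}p_{m}xp_{m}$ through a single corner $p_{1}\mathcal M_{c_{1}}p_{1}$ with norm-one wrapping. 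This produces a factorization of length $3$ rather than the length $5$ obtained from Pisier's Lemma 14, and it is precisely here that the exponent $3$ is gained; under $\phi^{(n)}$ each of the three $\mathcal M_{c_{1}}$-valued factors costs one power of $\|\phi\|$, giving $\|\phi\|^{3}$.

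I expect two steps to be the real obstacles. The first is carrying out the length-$3$ factorization of the diagonal part with constant-free bookkeeping, so that combined with the vanishing remainder the limit is genuinely $\|\phi^{(n)}(x)\|\le\|\phi\|^{3}\|x\|$ and not $C\|\phi\|^{3}\|x\|$; this requires that the partial isometries implementing the equivalences be absorbed with norm exactly $1$, and that the measurable-selection argument of Lemma \ref{4.2} assemble the fibrewise projections and matrix units into honest elements of $\mathcal M_{c_{1}}$. The second obstacle is the final reassembly across the four central summands: since the idempotents $\phi(z_{i})$ need not be orthogonal, I would have to argue, using $\|\phi(z_{i})\|\le\|\phi\|$, the normality of $\phi$, and the finiteness of the type decomposition, that the four block estimates combine into the single clean inequality $\|\phi\|_{cb}\le\|\phi\|^{3}$.
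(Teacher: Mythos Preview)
Your proposal misidentifies what Christensen's method in \cite{C2} actually is, and the length-$3$ factorization you describe for the diagonal term $\sum_m p_m x p_m$ does not exist with constant $1$. The equivalences $p_1\sim p_m$ let you write $p_m x_{ij} p_m = v_m(v_m^* x_{ij} v_m)v_m^*$ with $v_m^*x_{ij}v_m\in p_1\mathcal M p_1$, but since the wrapping partial isometries $v_m$ vary with $m$ they cannot be pulled outside the sum into a single diagonal factor; this is exactly why Pisier's Lemma~14 costs two extra diagonal layers and gives length $5$, not $3$. No cyclic-group averaging by $u=\sum_m\lambda^{m-1}p_m$ repairs this: averaging produces $\sum_m p_m x p_m$ from $x$, but does not factor it. Your own first ``real obstacle'' is therefore fatal to the approach as written.

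The paper's proof is not a similarity-length argument at all. It first uses complete boundedness to produce a positive invertible $t$ with $\pi(\cdot)=t\phi(\cdot)t^{-1}$ a $*$-representation, normalizes $\|t\|=1$, and then proves the operator inequality $\pi^{(n)}(U)(I_n\otimes t^2)\pi^{(n)}(U^*)\le \|\phi\|^6(I_n\otimes t^2)$ for every unitary $U\in M_n(\mathcal M)$; this yields $\|\phi\|_{cb}\le\|\phi\|^3$ directly. The key device that simultaneously dissolves both of your anticipated obstacles is a \emph{global} averaging: one builds a single injective von Neumann subalgebra $\mathcal R=\pi_1(\mathcal M)\oplus\mathcal N\oplus\mathcal B$ inside $\pi(\mathcal M)$, where $\mathcal N\cong M_n(\mathbb C)$ is generated by matrix units $\{r_{ij}\}$ coming from the Property~$\Gamma$ projections in $\pi_2(\mathcal M_{c_1})$ and $\mathcal B\cong B(\ell^2)$ sits in the properly infinite part, and chooses $m$ in the ultraweak closed convex hull of $\{ut^2u^*:u\in\mathcal U(\mathcal R)\}$ intersected with $\mathcal R'$. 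Since $m$ commutes with the central projections $q_i=\pi(p_i)$ one never has to recombine non-orthogonal idempotents; and since $m$ commutes with the matrix units $r_{ij}$, the II$_1$ estimate reduces (via the elements $f_k=\sum_i e_{i1}\otimes r_{ki}$) to a scalar corner where the contraction is written as an average of two unitaries and the basic inequality $um u^*\le c^2m$ applies. The relation $c^{-2}t^2\le m\le c^2t^2$ then converts the $c^2$ bound for $m$ into the $c^6$ bound for $t^2$.
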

\begin{proof}
By Theorem \ref{mainthm1} we can obtain that $\mathcal{M}_{c_{1}}$ has finite similarity degree. It follows directly from Lemma \ref{5.1} that $\phi$ is completely bounded. Therefore there exists an invertible positive operator $t \in B(H)$ such that $\pi(\cdot)=t \phi(\cdot)
t^{-1}$ is a unital $*$-homomorphism.  Since $\phi$ is continuous
from $\mathcal M$ in $\sigma(\mathcal M,\mathcal M_\sharp)$ topology
to $B(H)$ in $\sigma(B(H),B(H)_\sharp)$ topology, $\pi$ is
continuous from $\mathcal M$ in $\sigma(\mathcal M,\mathcal
M_\sharp)$ topology to $B(H)$ in $\sigma(B(H),B(H)_\sharp)$
topology.

Let $\mathcal I=ker(\pi)$. Then $\mathcal I$ is a
 two-sided idea in $\mathcal M$ and is closed in  $\sigma(\mathcal M,\mathcal M_\sharp)$
 topology. From Proposition 1.10.5 in \cite{Sa}, it follows that
 there is a central projection $p$ in $\mathcal M$
 such that $\mathcal I=p\mathcal M$. Let $\phi_p, \pi_{p}: \mathcal (I-p)\mathcal M\rightarrow B(H)$ be defined by$$ \phi_{p}(a)=\phi( a)\qquad \text{and} \qquad \pi_p(a)=\pi( a) \quad \text{ for $a\in (I-p)\mathcal M$.}$$
 By the choice of $p$, we know that $\phi_p$ and $\pi_p$ are injective homomorphisms from $(I-p)\mathcal M$ to $B(H)$ such that
 $\|\phi_p\|=\|\phi\|,$ $\|\phi_p\|_{cb}=\|\phi\|_{cb},$ and
 $\phi_p(\cdot)=t\pi_p(\cdot)t^{-1}$. Moreover, by the fact that $p$ is a central projection, we have
 $$
 (I-p)\mathcal{M}= (I-p)\mathcal{M}_{1} \oplus (I-p)\mathcal{M}_{c_{1}} \oplus (I-p)\mathcal{M}_{c_{\infty}} \oplus (I-p)\mathcal{M}_{\infty},
 $$ and $(I-p)\mathcal M_{c_1}$ is a type II$_1$ von Neumann algebra with Property $\Gamma$.
 Replacing $\mathcal M, \phi,$
 and $
 \pi$ by $(I-p)\mathcal M, \phi_p,$ and $
 \pi_p$ if necessary, we might assume that
 $$\text{$\phi$  is an
 injective homomorphism and  $\pi$ is a $*$-isomorphism.}$$
 Dividing $t$
  by $\Vert t\Vert$ if necessary, we assume that $\Vert t \Vert =
  1$.

Let $(\mathcal{M})_{1}$ be the unit ball of $\mathcal{M}$ and
$(\pi(\mathcal{M}))_{1} $ be the unit ball of $\pi(\mathcal{M})$.
Note $\sigma(\mathcal M,\mathcal M_\sharp)$ topology (or
$\sigma(B(H),B(H)_\sharp)$ topology) coincides with the weak
operator topology ($WOT$) on bounded subsets of $\mathcal{M}$ (or
$B(H)$ respectively). Since $\pi$ is continuous from $\mathcal M$ in
$\sigma(\mathcal M,\mathcal M_\sharp)$ topology to $B(H)$ in
$\sigma(B(H),B(H)_\sharp)$ topology,  $\pi$ is $WOT$-$WOT$
continuous when restricted to bounded subsets of $\mathcal{M}$.
Therefore $\pi(\mathcal{M})$ is a von Neumann algebra.

Let $p_{1}, p_{2}, p_{3}$ be central projections in $\mathcal{M}$
with sum $I$ such that $ p_{1}\mathcal{M}=\mathcal{M}_{1}$, $p_{2}
\mathcal{M}=\mathcal{M}_{c_{1}}$ and
$p_{3}\mathcal{M}=\mathcal{M}_{c_{\infty}} \oplus
\mathcal{M}_{\infty}$. By assumption on $\mathcal M$, we know that $\mathcal{M}_{c_{1}}$ is a type II$_1$ von Neumann algebra with Property $\Gamma$. Moreover $$q_{i}=\pi(p_{i})$$ is a central
projection in $\pi(\mathcal{M})$ for each $i =1, 2, 3$ and we can
decompose
$$\pi=\pi_{1} \oplus \pi_{2} \oplus  \pi_{3},$$
where$$\text{ $\pi_{i}(a)=\pi(p_{i}a)=q_{i}\pi(a)$,  \ \ $\forall a
\in \mathcal{M}$, \ $i=1, 2, 3$.}$$ Since $\pi$ is $WOT$-$WOT$
continuous when restricted to bounded subsets of $\mathcal{M}$, each
$\pi_{i}$ is $WOT$-$WOT$ continuous on bounded sets of
$\mathcal{M}$. If one of $\pi_{1}, \pi_{2}, \pi_{3}$ is trivial, the
following proof will be simplified. Here we assume that they are all
nontrivial.  By a similar argument as the preceding paragraph, we
obtain that  $\pi_{i}(\mathcal{M})$ is a von Neumann algebra over
$q_{i}H$ for $i=1,2,3$. Since $\pi$ is a $*$-isomorphism from
$\mathcal M_{c_1}$ onto $\pi(\mathcal M_{c_1})=\pi_2(\mathcal
M_{c_1})$ and $\mathcal M_{c_1}$ is  a countably decomposable  type
II$_1$ von Neumann algebra with Property $\Gamma$, $\pi_2(\mathcal
M_{c_1})$ is a countably decomposable  type II$_1$ von Neumann
algebra with Property $\Gamma$.

Let $c=\Vert \phi \Vert$. It follows that for any $a \in (\mathcal{M})_{1}$,
$$\phi(a)\phi(a)^{*} \leq c^{2}.$$
Since $\pi(\cdot)=t \phi(\cdot)t^{-1}$ and $t$ is positive, we
obtain that for any $a \in (\mathcal{M})_{1}$,
\begin{eqnarray}
\pi(a)t^{2}\pi(a)^{*} \leq c^{2} t^{2}. \label{31}
\end{eqnarray}
Since $\pi$ is a $*$-isomorphism between $\mathcal M$ and
$\pi(\mathcal M)$, we know that $\pi((\mathcal{M})_{1})=
(\pi(\mathcal{M}))_{1}$. Now, equation (\ref{31}) implies that for
any unitary $u \in \pi(\mathcal{M})$, $ut^{2}u^{*} \leq
c^{2}t^{2}$. Therefore for any unitaries $u, v \in
\pi(\mathcal{M})$,
\begin{eqnarray}
ut^{2}u^{*} \leq c^{2} vt^{2}v^{*}. \label{32}
\end{eqnarray}

To show $\Vert \phi \Vert_{cb} \leq c^{3}$, we just need to prove that for any $n \in \mathbb{N}$ and any unitary $U \in M_{n}(\mathcal{M})$,
\begin{eqnarray}
\pi^{(n)}(U) (I_{n} \otimes t^{2}) \pi^{(n)}(U^{*}) \leq c^{6} (I_{n} \otimes t^{2}), \label{33}
\end{eqnarray}
where $\pi^{(n)}: M_{n}(\mathcal{M}) \to B(H^{n})$ is the $n-$folded map of $\pi$.

Now fix $n \in \mathbb{N}$, $\epsilon >0$ and a unitary $U \in
M_{n}(\mathcal{M})$. Let $ x=(x_{1}, x_{2}, \dots, x_{n})$ be a unit
vector in $H^{n}$.  Denote $$\text{$x^{(i)}=(I_{n} \otimes
p_{i})x=(x_{1}^{(i)}, x_{2}^{(i)}, \dots, x_{n}^{(i)})$, \qquad  $i=1, 2,
3$.} $$ In the following we will carry out computations according to
the representations $\pi_{1}, \pi_{2}, \pi_{3}$ and prove inequality
(\ref{33}) in each case.

We will first replace $t^{2}$ by
an element that commutes with $q_{1}, q_{2}$ and $q_{3}$.

 Note $\pi_{2}(\mathcal{M}_{c_{1}})$ is a countably decomposable  type II$_1$ von Neumann algebra with Property $\Gamma$. Thus there exists  a faithful normal tracial state $\rho$ on $\pi_{2}(\mathcal{M}_{c_{1}})$. Let $\|\cdot\|_2$ be the $2$-norm induced by $\rho$ on $\pi_2(\mathcal{M}_{c_{1}})$. Then $2$-norm topology  coincides with the strong operator topology on bounded subsets of $\pi_{2}(\mathcal{M}_{c_{1}})$.
 It follows from Corollary 3.4 in \cite{QS} that we can obtain $n$ orthogonal equivalent projections $r_{1}, r_{2}, \dots, r_{n}$ in $\pi_{2}(\mathcal{M}_{c_{1}})$  with sum $q_{2}$ such that, for each $i \in \{ 1, 2, \dots, n\}$,
\begin{eqnarray}
\Vert \left (I_{n} \otimes ( 1-r_{i} )\right ) (\pi_{2})^{(n)}(U_{2})\left (I_{n} \otimes   r_{i}  \right) x^{(2)} \Vert \leq  (\frac{\epsilon}{n^{3}})^{1/2}. \label{34}
\end{eqnarray}
From  the fact that projections $r_{1}, r_{2}, \dots, r_{n}$ are orthogonal equivalent projections   in $\pi_{2}(\mathcal{M}_{c_{1}})$,  there exists a system of matrix units $\{ r_{ij}: i, j=1, 2, \dots, n \}\subseteq\pi_{2}(\mathcal{M}_{c_{1}})$ in such that $r_{ii}=r_{i}$ for each $i$. Denote by $\mathcal{N}$ the von Neumann algebra generated by $\{ r_{ij}: i, j=1, 2, \dots, n \}$. It follows that $\mathcal{N} \cong M_{n}(\mathbb{C})$ and $q_{2} \in \mathcal{N}$.

Since $\mathcal{M}_{c_{\infty}}$ is of type  II$_{\infty}$ and $\mathcal{M}_{\infty}$ is of type  III, there exist subfactors $\mathcal B_2$ in $\mathcal{M}_{c_{\infty}}$ and $\mathcal B_3$ in $\mathcal{M}_{\infty}$ such that $\mathcal B_2\cong \mathcal B_3\cong B(l^2(\Bbb N)$. Hence,  there exists a subfactor $\mathcal{B}$ of $\pi_{3}(\mathcal{M})$ (=$\pi_{3}(\mathcal{M}_{c_{\infty}} \oplus \mathcal{M}_{\infty})$) such that $\mathcal{B} \cong B(l^{2}(\mathbb{N}))$ and $q_{3} \in \mathcal{B}$. Take $\mathcal{K}=\mathcal{B} ' \cap \pi_{3}(\mathcal{M})$. By Lemma 11.4.11 in \cite{KR1},
$$\pi_{3}(\mathcal{M}) \cong \mathcal{B} \otimes \mathcal{K}.$$

Let $\mathcal{R}=\pi_{1}(\mathcal{M}) \oplus \mathcal{N} \oplus \mathcal{B}$. Since $\mathcal{M}_{1}$ is of type  I, the von Neumann algebra $\pi_{1}(\mathcal{M})$ (=$\pi_{1}(\mathcal{M}_{1})$) is of type  I. By the choices of $\mathcal B$ and $\mathcal N$, we know that $\mathcal{R}$ is a type I von Neumann algebra  and thus   an injective von Neumann algebra. Let $\mathcal{U}$ be the unitary group of $\mathcal{R}$. Hence the set
\begin{eqnarray}
W=\overline{conv}^{uw} \{ ut^{2}u^{*}: u \in \mathcal{U} \} \cap \mathcal{R}'  \label{35}
\end{eqnarray}
is nonempty, where $\overline{conv}^{uw} \{ ut^{2}u^{*}: u \in \mathcal{U} \}$ denotes the  closure of the convex hull of the set $\{ ut^{2}u^{*}: u \in \mathcal{U} \}$ in the ultraweak topology. Choose $m \in W$. It is clear to see that $\Vert m \Vert \leq \Vert t^{2} \Vert \leq 1$. By equation (\ref{32}), for any unitaries $u, v \in \pi(\mathcal{M})$,
\begin{eqnarray}
umu^{*} \leq c^{2} vmv^{*}. \label{36}
\end{eqnarray}
Since $q_1,q_2 $ and $q_3$ are in $\mathcal R$, we obtain that $m$ commutes with $q_{1}, q_{2}$ and $q_{3}$. Let
$$m_{1}=q_{1}m, \qquad  m_{2}=q_{2}m \qquad \text { and } \qquad m_{3}=q_{3}m.$$
 Therefore inequality (\ref{36}) implies
\begin{enumerate}
\item[(i)] for any unitaries $u , v \in \pi_{1}(\mathcal{M}_{1})$,
\begin{eqnarray}
um_{1}u^{*} \leq c^{2}vm_{1}v^{*}; \label{37}
\end{eqnarray}

\item[(ii)] for any unitaries $u, v \in \pi_{2}(\mathcal{M}_{c_{1}})$,
\begin{eqnarray}
um_{2}u^{*} \leq c^{2}vm_{2}v^{*}; \label{38}
\end{eqnarray}

\item[(iii)] for any unitaries $u, v \in \pi_{3}(\mathcal{M}_{c_{\infty}} \oplus \mathcal{M}_{\infty})$,
\begin{eqnarray}
um_{3}u^{*} \leq c^{2}vm_{3}v^{*}. \label{39}
\end{eqnarray}
\end{enumerate}

Since $m \in \mathcal{R}'$, it commutes with $\pi_{1}(\mathcal{M})$. Thus $I_{n}\otimes m_{1}$ commutes with $\pi_{1}^{(n)}(\mathcal{M})$. So we obtain that
\begin{eqnarray}
(\pi_{1})^{(n)}(U)(I_{n}\otimes m_{1}) (\pi_{1})^{(n)}(U^{*}) =(I_{n}\otimes m_{1})(\pi_{1})^{(n)}(U)(\pi_{1})^{(n)}(U^{*})= I_{n} \otimes m_{1}. \label{40}
\end{eqnarray}

Since $m \in \mathcal{R}'$ and $\mathcal{K}=\mathcal{B}' \cap \pi_{3}(\mathcal{M})$, we get $m_{3} \in \mathcal{K}$. Combining inequality  (\ref{39}) with the facts that  $\mathcal{B} \cong B(l^{2}(\mathbb{N})),   \pi_{3}(\mathcal{M}) \cong \mathcal{B} \otimes \mathcal{K}$ and $m\in \mathcal K$, we conclude that
\begin{eqnarray}
(\pi_{3})^{(n)}(U)(I_{n} \otimes m_{3})(\pi_{3})^{(n)}(U^{*}) \leq c^{2}(I_{n} \otimes m_{3}). \label{41}
\end{eqnarray}

In the following we will deal with   $(\pi_{2})^{(n)}$ and $\mathcal M_{c_1}$. We will use  arguments  similar to the ones used in the proof of Lemma 5 in \cite{Pi4} and in the proof of Theorem 2.3 in \cite{C2}.

Recall  $r_{1}, r_{2}, \dots, r_{n}$  are orthogonal equivalent projections in $\pi_{2}(\mathcal{M}_{c_{1}})$  such that
\begin{equation} r_{1}+ r_{2}+ \cdots+ r_{n} =q_2 \label{41.5}.\end{equation} Also recall that $\{ r_{ij}: i, j=1, 2, \dots, n \}$ is a system of matrix units of $\mathcal N$ satisfying, for each $i \in \{ 1, 2, \dots, n\}$, we have $r_{ii}=r_{i}$   and
\begin{eqnarray}
\Vert \left (I_{n} \otimes ( 1-r_{i} )\right ) (\pi_{2})^{(n)}(U_{2})\left (I_{n} \otimes   r_{i}  \right) x^{(2)} \Vert \leq  (\frac{\epsilon}{n^{3}})^{1/2}.   \label{39.1}
\end{eqnarray}
Let $(e_{ij})$ be a system of matrix units for $M_{n}(\mathbb{C})$. For $k=1, 2, \dots, n$, let 
$$f_{k}= \sum\limits_{i=1}^{n} e_{i1} \otimes r_{ki}.$$
Therefore for each $k=1,\ldots, n$,\begin{equation}\text{ $f_{k}f_{k}^{*}= I_{n} \otimes r_{k}$, \ \  $f_{k}^{*}f_{k}=e_{11} \otimes q_{2}$ \ and  \ $f_{k} (I_{n} \otimes m_{2})=(I_{n} \otimes m_{2})f_{k}$.} \label{39.0}\end{equation} Since $\Vert m \Vert \leq 1$, by equation  (\ref{41.5}) and inequalities (\ref{39.1}), (\ref{39.0}) we obtain
\begin{eqnarray}
&&\langle (\pi_{2})^{(n)}(U) (I_{n} \otimes m_{2}) (\pi_{2})^{(n)}(U^{*}) x^{(2)}, x^{(2)} \rangle  \notag \\
&=& \sum\limits_{i, j, k=1}^{n} \langle (I_{n} \otimes r_{i}) (\pi_{2})^{(n)}(U) (I_{n} \otimes r_{j}) (I_{n} \otimes m_{2}) (\pi_{2})^{(n)} (U^{*})(I_{n} \otimes r_{k}) x^{(2)}, x^{(2)}   \notag \rangle \\
&\leq& \sum\limits_{k=1}^{n} \langle (I_{n} \otimes r_{k}) (\pi_{2})^{(n)}(U) (I_{n} \otimes r_{k}) (I_{n} \otimes m_{2}) (\pi_{2})^{(n)} (U^{*})(I_{n} \otimes r_{k}) x^{(2)}, x^{(2)} \rangle + \epsilon \notag \\
&=&\sum\limits_{k=1}^{n} \langle f_{k}f_{k}^{*} (\pi_{2})^{(n)}(U) f_{k}f_{k}^{*} (I_{n} \otimes m_{2}) (\pi_{2})^{(n)} (U^{*}) f_{k}f_{k}^{*}x^{(2)}, x^{(2)} \rangle + \epsilon \notag \\
&=&\sum\limits_{k=1}^{n} \langle f_{k}^{*} (\pi_{2})^{(n)}(U) f_{k} (I_{n} \otimes m_{2}) f_{k}^{*} (\pi_{2})^{(n)} (U^{*})f_{k}f_{k}^{*} x^{(2)}, f_{k}^{*}x^{(2)} \rangle + \epsilon. \label{46}
\end{eqnarray}
For each $k \in \{ 1, 2, \dots, n \}$, let \begin{equation} y_{k}=f_{k}^{*}x^{(2)}\quad \text {  and } \quad a_{k}=f_{k}^{*} (\pi_{2})^{(n)}(U) f_{k}\label{46.1}\end{equation}
By the choice of $f_k$, we have
$$
a_k=(e_{11}\otimes q_2 )a_k (e_{11}\otimes q_2 ) \quad \text { and } \quad
\|a_k\|=\|f_{k}^{*} (\pi_{2})^{(n)}(U) f_{k}\|\le \|f_{k}^{*} \|\| f_{k}\|= 1.
$$
 Denote $a_{k}=e_{11}\otimes a'_{k}$, where $a'_{k} \in  \pi_{2}(\mathcal{M}_{c_{1}})$ is a contraction. As  $\pi_{2}(\mathcal{M}_{c_{1}})$ is of  type II$_1$, we can find
 {\iffalse find  an element  $b_{k}$  in the unit ball of $\mathcal{M}_{c_{1}}$ such that $\pi_{2}(b_{k})=a'_{k}$. Since $\mathcal{M}_{c_{1}}$ is of type II$_1$ von Neumann algebra, $b_{k}$ is an average of two unitaries in $\mathcal{M}_{c_{1}}$. Therefore there exist\fi}
 two unitaries $u_{k}$ and $v_{k}$ in $\pi_{2}(\mathcal{M}_{c_{1}})$ such that $a'_{k}=\frac{u_{k}+v_{k}}{2}$. So
 \begin{equation}
 a_k=\frac 1 2 (e_{11}\otimes (u_k+v_k)), \qquad \forall \ k=1,\cdots, n. \label{46.5}
 \end{equation}
 By inequality (\ref{38}),
\begin{eqnarray}
(e_{11} \otimes u_{k}) (I_{n} \otimes m_{2})(e_{11} \otimes u_{k})^{*} \leq c^{2}(e_{11} \otimes m_{2}) \label{47}
\end{eqnarray}
and
\begin{eqnarray}
(e_{11} \otimes v_{k}) (I_{n} \otimes m_{2})(e_{11} \otimes v_{k})^{*} \leq c^{2} (e_{11} \otimes m_{2}) \label{48}
\end{eqnarray}

It follows from   (\ref{46}),  (\ref{46.1}) and (\ref{46.5}) that
\begin{align*}
 &\langle (\pi_{2})^{(n)}(U) (I_{n} \otimes m_{2}) (\pi_{2})^{(n)}(U^{*}) x^{(2)}, x^{(2)} \rangle \notag \\
\le&\sum\limits_{k=1}^{n} \langle f_{k}^{*} (\pi_{2})^{(n)}(U) f_{k} (I_{n} \otimes m_{2}) f_{k}^{*} (\pi_{2})^{(n)} (U^{*})f_{k}f_{k}^{*} x^{(2)}, f_{k}^{*}x^{(2)} \rangle + \epsilon\notag\\
 =& \frac{1}{4} \sum\limits_{k=1}^{n}\langle(e_{11} \otimes (u_{k}+v_{k})) (I_{n} \otimes m_{2}) (e_{11} \otimes (u_{k}+v_{k}))^{*}y_{k}, y_{k} \rangle +\epsilon \notag   \\
 \leq& \frac{1}{4} \sum\limits_{k=1}^{n}\langle(e_{11} \otimes (u_{k}+v_{k})) (I_{n} \otimes m_{2}) (e_{11} \otimes (u_{k}+v_{k}))^{*}y_{k}, y_{k} \rangle \notag \\
 &+ \frac{1}{4} \sum\limits_{k=1}^{n}\langle (e_{11} \otimes (u_{k}-v_{k})) (I_{n} \otimes m_{2}) (e_{11} \otimes (u_{k}-v_{k}))^{*}y_{k}, y_{k} \rangle + \epsilon \notag \\
 =& \frac{1}{2} \sum\limits_{k=1}^{n}\langle (e_{11} \otimes u_{k}) (I_{n} \otimes m_{2}) (e_{11} \otimes u_{k})^{*}y_{k}, y_{k} \rangle \notag \\
 &+ \frac{1}{2} \sum\limits_{k=1}^{n}\langle (e_{11} \otimes v_{k}) (I_{n} \otimes m_{2}) (e_{11} \otimes v_{k})^{*}y_{k}, y_{k} \rangle + \epsilon. \notag \\
\end{align*}

Thus by (\ref{47}) and (\ref{48}),
\begin{align}
 &\langle (\pi_{2})^{(n)}(U) (I_{n} \otimes m_{2}) (\pi_{2})^{(n)}(U^{*}) x^{(2)}, x^{(2)} \rangle \notag \\
 \leq& c^{2} \sum\limits_{k=1}^{n} \langle (e_{11}\otimes m_{2})y_{k}, y_{k} \rangle+ \epsilon \notag \\
 =&c^{2}\sum\limits_{k=1}^{n}\langle f_{k}(I_{n} \otimes m_{2} )f_{k}^{*}x^{(2)}, x^{(2)} \rangle +\epsilon. \label{49}
\end{align}

Since $f_{k}$ commutes with $I_{n} \otimes m_{2}$, we obtain from (\ref{49}) that
\begin{eqnarray}
\langle (\pi_{2})^{(n)}(U) (I_{n} \otimes m_{2}) (\pi_{2})^{(n)}(U^{*}) x^{(2)}, x^{(2)} \rangle &=& c^{2}\sum\limits_{k=1}^{n}\langle f_{k}f_{k}^{*}(I_{n} \otimes m_{2} )x^{(2)}, x^{(2)} \rangle +\epsilon \notag\\
&=&c^{2} \langle (I_{n} \otimes m_{2})x^{(2)}, x^{(2)} \rangle +\epsilon. \label{50}
\end{eqnarray}
Since $x$ and $\epsilon$ were arbitrarily chosen, (\ref{50}) implies that
\begin{eqnarray}
(\pi_{2})^{(n)}(U) (I_{n} \otimes m_{2}) (\pi_{2})^{(n)}(U^{*}) \leq c^{2} (I_{n} \otimes m_{2} ). \label{51}
\end{eqnarray}

Hence by inequalities (\ref{40}), (\ref{41}) and (\ref{51}), we obtain
\begin{eqnarray}
\pi^{(n)}(U)(I_{n} \otimes m) \pi^{(n)}(U^{*}) \leq c^{2}(I_{n} \otimes m), \qquad \text { for all unitary } \ U\in  M_n(\mathcal M). \label{52}
\end{eqnarray}

Since $m \in W$, it follows from (\ref{32}) and (\ref{35}) that $t^{2} \leq c^{2}m$ and $m \leq c^{2}t^{2}$. Therefore (\ref{52}) implies
$$\pi^{(n)}(U)(I_{n} \otimes t^{2}) \pi^{(n)}(U^{*}) \leq c^{6}(I_{n} \otimes t^{2}), \qquad \text { for all unitary } \ U\in   M_n(\mathcal M).$$
It follows that $$
\|\phi||_n\le \|\phi \|^3  \qquad \text{for all } \ n\in \Bbb N,
$$ and $$\Vert \phi \Vert_{cb} \leq \Vert \phi \Vert^{3}.$$
The proof is complete.
\end{proof}

The class of C$^*$-algebras in the following theorem was considered in \cite{HS} and it was shown in \cite{HS} such C$^*$-algebras have similarity degree no more than 11. The following theorem  gives a better estimation.
\begin{theorem}\label{5.3}
Suppose $\mathcal{A}$ is a separable unital C$^*$-algebra
satisfying
\begin{enumerate} \item[] Condition (G):
if $\pi$ is a unital $*$-representation of $\mathcal{A}$ on a Hilbert space
$  H$ such that $\pi(\mathcal{A})''$ is a type  II$_{1}$
factor, then $\pi(\mathcal{A})''$ has Property $\Gamma$, where
$\pi(\mathcal{A})''$ is the von Neumann algebra generated by
$\pi(\mathcal{A})$ in $B( H)$.\end{enumerate} Then
$d(\mathcal{A}) \leq 3.$ Moreover, if $\mathcal A$ is non-nuclear, then $d(\mathcal{A}) = 3.$
\end{theorem}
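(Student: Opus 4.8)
The plan is to prove the sharper quantitative statement that \emph{every} bounded unital homomorphism $\rho:\mathcal A\to B(H)$ satisfies $\Vert\rho\Vert_{cb}\le\Vert\rho\Vert^{3}$; this gives $d(\mathcal A)\le 3$ (with constant $1$). Write $c=\Vert\rho\Vert$. Since $\Vert\rho\Vert_{cb}=\sup_n\Vert\rho^{(n)}\Vert$ and each $\Vert\rho^{(n)}(x)\Vert$ is tested against finitely many vectors, I would first reduce to a separable Hilbert space. Given $n$, $x\in M_n(\mathcal A)$ and unit vectors $\xi,\eta\in H^{(n)}$, let $K\subseteq H$ be the closed linear span of $\{\rho(a)\xi_i,\rho(a)\eta_j:a\in\mathcal A,\ 1\le i,j\le n\}$. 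Because $\mathcal A$ is separable and $\rho$ is unital, $K$ is a separable $\rho(\mathcal A)$-invariant subspace containing all $\xi_i,\eta_j$, so $\rho_0:=\rho(\cdot)|_K$ is a bounded unital homomorphism with $\Vert\rho_0\Vert\le c$ and $\langle\rho^{(n)}(x)\xi,\eta\rangle=\langle\rho_0^{(n)}(x)\xi,\eta\rangle$. Thus it suffices to prove $\Vert\rho_0\Vert_{cb}\le\Vert\rho_0\Vert^{3}$ for homomorphisms acting on a separable Hilbert space.

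Next I would realize $\rho_0$ through a $*$-representation on a separable space, so that the structural hypotheses of Theorem \ref{mainthm2} become available. By Lemma \ref{2.3}, extend $\rho_0$ to a weak-$*$ continuous unital homomorphism $\overline{\rho_0}:\mathcal A^{\sharp\sharp}\to B(K)$ with $\Vert\overline{\rho_0}\Vert=\Vert\rho_0\Vert$, and let $z\in\mathcal A^{\sharp\sharp}$ be the central support projection of $\overline{\rho_0}$, so that $\overline{\rho_0}$ vanishes exactly on $(1-z)\mathcal A^{\sharp\sharp}$. Since $K$ is separable, the normal functionals $\omega\circ\overline{\rho_0}$ (for $\omega\in B(K)_{\sharp}$) form a separable subset of $\mathcal A^{\sharp}$; decomposing a countable dense family of them into states and taking the direct sum of the associated GNS representations produces a $*$-representation $\sigma$ of $\mathcal A$ on a separable Hilbert space $L$ whose central support $z_\sigma$ dominates $z$. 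Setting $\mathcal M=\sigma(\mathcal A)''$ and letting $\overline\sigma:\mathcal A^{\sharp\sharp}\to\mathcal M$ be the normal extension of $\sigma$, the inclusion $\ker\overline\sigma=(1-z_\sigma)\mathcal A^{\sharp\sharp}\subseteq(1-z)\mathcal A^{\sharp\sharp}\subseteq\ker\overline{\rho_0}$ allows $\overline{\rho_0}$ to factor as $\overline{\rho_0}=\phi\circ\overline\sigma$ for a unital, weak-$*$ continuous homomorphism $\phi:\mathcal M\to B(K)$ with $\Vert\phi\Vert=\Vert\rho_0\Vert$ and $\phi\circ\sigma=\rho_0$. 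I expect the construction and verification of $\sigma$ with $z_\sigma\ge z$ (which legitimizes this factorization) to be the main obstacle, precisely because $\rho_0$ is only an algebra homomorphism, so one cannot pull back faithful normal states directly; the support computation via a countable norm-dense family of functionals is what makes the reduction work.

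Now $\mathcal M=\sigma(\mathcal A)''$ acts on the separable space $L$, hence its type II$_1$ summand $\mathcal M_{c_1}$ is automatically countably decomposable. Decomposing $\mathcal M$ as a direct integral over its center (Lemma \ref{2.4}), with $\sigma$ decomposing compatibly, the II$_1$ factor fibres have the form $\sigma_s(\mathcal A)''$, that is, type II$_1$ factor $*$-representations of $\mathcal A$; by Condition (G) each has Property $\Gamma$, so by Proposition 3.12 in \cite{QS} the algebra $\mathcal M_{c_1}$ has Property $\Gamma$. Theorem \ref{mainthm2} therefore applies to the weak-$*$ continuous unital homomorphism $\phi$, giving $\Vert\phi\Vert_{cb}\le\Vert\phi\Vert^{3}$. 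Since $\sigma$ is a $*$-homomorphism, $\Vert\sigma\Vert_{cb}=1$, and from $\rho_0=\phi\circ\sigma$ we obtain $\Vert\rho_0\Vert_{cb}\le\Vert\phi\Vert_{cb}\le\Vert\phi\Vert^{3}=\Vert\rho_0\Vert^{3}\le c^{3}$. Combining this with the first paragraph yields $|\langle\rho^{(n)}(x)\xi,\eta\rangle|\le c^{3}\Vert x\Vert$ for all such data, whence $\Vert\rho\Vert_{cb}\le\Vert\rho\Vert^{3}$ and $d(\mathcal A)\le 3$.

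Finally, for the moreover statement I would argue by elimination. Recall from item (i) of the introduction that $d(\mathcal A)=2$ if and only if $\mathcal A$ is nuclear, and that by Pisier's theorem the similarity degree is a (positive) integer whenever it is finite. If $\mathcal A$ is non-nuclear then it is infinite-dimensional, so $d(\mathcal A)\ge 2$, and $d(\mathcal A)\ne 2$ by (i). Since $d(\mathcal A)$ is an integer with $2\le d(\mathcal A)\le 3$ and $d(\mathcal A)\ne 2$, we conclude $d(\mathcal A)=3$.
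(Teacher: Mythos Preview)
Your proof is correct and follows the same overall architecture as the paper: extend to the bidual, reduce to a von Neumann algebra with separable predual, verify via direct integral decomposition and Condition~(G) that its type~II$_1$ summand has Property~$\Gamma$, and then invoke Theorem~\ref{mainthm2}. The ``moreover'' clause is handled identically.

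The one genuine technical difference is in the separability reduction. The paper works directly with $(I-p)\mathcal{A}^{\sharp\sharp}$ (where $p$ is the kernel projection of the extended map) and argues that this von Neumann algebra is countably decomposable by observing that any family of mutually orthogonal projections in it is carried, after a similarity coming from the abelian subalgebra they generate, to mutually orthogonal projections in $B(K)$ with $K$ separable; combined with countable generation this yields separable predual. You instead manufacture an auxiliary $*$-representation $\sigma$ on a separable space by taking GNS representations of a countable norm-dense family of the pulled-back normal functionals $\omega\circ\overline{\rho_0}$, and then factor $\overline{\rho_0}=\phi\circ\overline\sigma$ through $\mathcal M=\sigma(\mathcal A)''$. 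Your route has the advantage that $\mathcal M$ visibly acts on a separable space from the outset and that the fibres in the direct integral are manifestly of the form $\sigma_s(\mathcal A)''$, so Condition~(G) applies without further unpacking; the paper's route avoids introducing the extra representation $\sigma$ but needs the short detour through similarity of the abelian subalgebra to get countable decomposability. Both lead to the same application of Theorem~\ref{mainthm2}.
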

\begin{proof} Suppose $\mathcal A$ is a separable unital C$^*$-algebra
satisfying Condition (G):
{\em If $\pi$ is a unital $*$-representation of $\mathcal{A}$ on a Hilbert space
$  K_1$ such that $\pi(\mathcal{A})''$ is a type  II$_{1}$
factor, then $\pi(\mathcal{A})''$ has Property $\Gamma$, where
$\pi(\mathcal{A})''$ is the von Neumann algebra generated by
$\pi(\mathcal{A})$ in $B( K_1)$. }

Suppose $\phi$  is a unital bounded homomorphism of  $\mathcal{A}$ on  a Hilbert space $K$.

Since we just need to show that $\Vert \phi \Vert_{cb} \le \Vert \phi \Vert^{3}$, we can focus on   $\phi(\mathcal A)$ contained in the C$^*$-subalgebra of $B(K)$ generated by $\phi(\mathcal{A})$, which is a separable C$^*$-algebra. Applying the GNS construction to a faithful state on the C$^*$-subalgebra generated by $\phi(\mathcal{A})$ if necessary, we may just assume that $K$ is a separable Hilbert space.

 From Lemma \ref{2.3}, $\phi$ can be extended to a bounded unital homomorphism $\tilde{\phi}:\mathcal{A}^{\sharp \sharp} \to B(H)$ that is $\sigma(\mathcal{A}^{\sharp\sharp}, \mathcal{A}^{\sharp}) \to \sigma(B(H), B(H)_{\sharp})$ continuous. Moreover  $\Vert \tilde{\phi} \Vert = \Vert \phi \Vert$ and $\Vert {\phi} \Vert_{cb} \le \Vert \tilde{\phi} \Vert_{cb}$.

 Let $\mathcal I=ker(\tilde \phi)$. Thus $\mathcal I$ is a two-sided ideal of $\mathcal A^{\sharp\sharp}$ and is closed in the  $\sigma(\mathcal{A}^{\sharp\sharp}, \mathcal{A}^{\sharp})$ topology. From Proposition 1.10.5 in \cite{Sa}, it follows that
 there is a   projection $p$ in the center of ${\mathcal{A}^{\sharp\sharp}}$
 such that $\mathcal I=p{\mathcal{A}^{\sharp\sharp}}$. Define $\tilde\phi_p:(I-p){\mathcal{A}^{\sharp\sharp}}\rightarrow B(K)$ as follows
 $$
 \tilde\phi_p (a)=\tilde\phi(a), \qquad \forall \ a\in (I-p){\mathcal{A}^{\sharp\sharp}}.
 $$
 Then $\tilde\phi_p$ is a unital injective homomorphism, which is $\sigma(\mathcal{A}^{\sharp\sharp}, \mathcal{A}^{\sharp})$ to $\sigma(B(H), B(H)_{\sharp})$ continuous. Moreover, $\Vert \tilde{\phi} \Vert = \Vert \tilde{\phi}_p \Vert$ and $\Vert \tilde{\phi} \Vert_{cb} = \Vert \tilde{\phi}_p \Vert_{cb}$.

We claim that $(I-p){\mathcal{A}^{\sharp\sharp}}$ is countably decomposable. Assume $\{P_\lambda\}_{\lambda\in\Lambda}$ is a family of orthogonal projections in $(I-p){\mathcal{A}^{\sharp\sharp}}$ with sum $I-p$. Let $\mathcal B$ be the von Neumann subalgebra generated by $\{P_\lambda\}_{\lambda\in\Lambda}$ in $(I-p){\mathcal{A}^{\sharp\sharp}}$. Obviously, Kadision's Similarity Problem for   $\mathcal B$ has an affirmative answer. In other words, there exists a positive invertible operator $T$ in $B(K)$ such that $T \tilde{\phi}_p(\cdot)T^{-1}$ is a $*$-homomorphism from $\mathcal B$ to $B(K)$, whence $ \{ T \tilde{\phi}_p(P_\lambda)T^{-1} \}_{\lambda\in\Lambda}$ is a family of orthogonal projections in $B(K)$. From the facts that $K$ is separable and $\tilde{\phi}_p$ is injective, we conclude that $\Lambda$ is a countable set and $(I-p){\mathcal{A}^{\sharp\sharp}}$ is countably decomposable.

Since $\mathcal A$ embeds into $\mathcal A^{\sharp\sharp}$ as a  $\sigma(\mathcal{A}^{\sharp\sharp}, \mathcal{A}^{\sharp})$-dense C$^*$-subalgebra, we know that $(I-p){\mathcal{A}^{\sharp\sharp}}$ is countably generated as a von Neumann algebra. Combining  with the  result that $(I-p){\mathcal{A}^{\sharp\sharp}}$ is countably decomposable, we obtain that $(I-p){\mathcal{A}^{\sharp\sharp}}$ has a separable predual. Without loss of generality, we might assume that  $(I-p){\mathcal{A}^{\sharp\sharp}}$  acts on a separable Hilbert space $H$.

Suppose $(I-p){\mathcal{A}^{\sharp\sharp}}$ has a type decomposition
$$(I-p){\mathcal{A}^{\sharp\sharp}}= \mathcal{M}_{1} \oplus \mathcal{M}_{c_{1}} \oplus \mathcal{M}_{c_{\infty}}  \oplus \mathcal{M}_{\infty},$$
where $\mathcal{M}_{c_{1}}$ is the type  II$_{1}$ central summand. We can assume that $\mathcal{M}_{c_{1}}$  acts on a separable Hilbert space $H_1$.

Notice that if $\mathcal{M}_{c_{1}}=0$, then the proof of Theorem \ref{mainthm2} will be simplified and
$$\Vert \phi \Vert_{cb} \leq \Vert \tilde{\phi} \Vert_{cb} \leq \Vert \tilde{\phi} \Vert^{3} = \Vert \phi \Vert^{3}.$$
In the following we will assume that $\mathcal{M}_{c_{1}}$ is nonvanishing and we show that $\mathcal{M}_{c_{1}}$ is a type II$_1$ von Neumann algebra with Property $\Gamma$. Let $\mathcal{M_{c_{1}}}=\int_{X} \bigoplus M_{s} d \mu$ and $H_{1} = \int_{X} \bigoplus H_{s} d \mu$ be the direct integral decompositions of $\mathcal{M}_{c_{1}}$ and $H_{1}$ relative to the center of $\mathcal{M}_{c_{1}}$. Thus $\mathcal M_s$ is a type II$_1$ factor for almost $s\in X$. Note  that $\mathcal A$ embeds into $\mathcal A^{\sharp\sharp}$ as a  $\sigma(\mathcal{A}^{\sharp\sharp}, \mathcal{A}^{\sharp})$-dense separable C$^*$-subalgebra. Also  note $p$ is a central projection of $\mathcal A^{\sharp\sharp}$. By Theorem 14.1.13 in \cite{KR1}, there is a unital $*$-homomorphism $\psi_s$ from $(I-p)\mathcal A$ to $\mathcal M_s$ such that $\psi_s((I-p)\mathcal A)$ generates $\mathcal M_s$ as a von Neumann algebra for almost $s\in X$. Let $\hat \psi_s:\mathcal A\rightarrow \mathcal M_s$ be such that $\hat \psi_s(a)=\psi_s((I-p)a)$ for all $a\in\mathcal A$. Then $\hat\psi_s$ is a $*$-homomorphism from  $\mathcal A$ into $\mathcal M_s$ such that  $\hat \psi_s(\mathcal A)$ generates $\mathcal M_s$ as a von Neumann algebra, for $s\in X$ almost everywhere. Since $\mathcal A$ satisfies condition (G), $\mathcal M_s$ is a type II$_1$ factor with Property $\Gamma$, for $s\in X$ almost everywhere. From Proposition 3.12 in \cite{QS}, it induces that $\mathcal{M}_{c_{1}}$  is a type II$_1$ von Neumann algebra with Property $\Gamma$.

 Since $\tilde\phi_p:(I-p){\mathcal{A}^{\sharp\sharp}}\rightarrow B(K)$ is a unital   homomorphism, which is $\sigma(\mathcal{A}^{\sharp\sharp}, \mathcal{A}^{\sharp})$ to $\sigma(B(H), B(H)_{\sharp})$ continuous, by Theorem \ref{mainthm2}, we have that $\Vert \tilde{\phi}_p \Vert_{cb}\le \Vert \tilde{\phi}_p\Vert^3$, whence $$\Vert  {\phi}  \Vert_{cb}\le \Vert \tilde {\phi}  \Vert_{cb}=    \Vert \tilde {\phi}_p  \Vert_{cb} \le \Vert \tilde{\phi}_p\Vert^3 =\Vert \tilde{\phi} \Vert^3   = \Vert  {\phi} \Vert^3$$ and $d(\mathcal A)\le 3$.

It was shown in \cite{Pi3} that the similarity degree of a C$^*$-algebra is less than or equal to 2 if and only if the C$^*$-algebra  is nuclear. Notice that the similarity degree is always an integer. Since $d(\mathcal A)\le 3$, we know if $\mathcal A$   is non-nuclear, then $d(\mathcal{A}) = 3$.
 The proof is complete.
\end{proof}

 \begin{definition}
 A unital C$^*$-algebra $\mathcal A$ is said to have Property c$^*$-$\Gamma$ if it satisfies Condition (G) in Theorem \ref{5.3},  as follows: {\em If $\pi$ is a unital $*$-representation of $\mathcal{A}$ on a Hilbert space
$  H$ such that $\pi(\mathcal{A})''$ is a type  II$_{1}$
factor, then $\pi(\mathcal{A})''$ has Property $\Gamma$, where
$\pi(\mathcal{A})''$ is the von Neumann algebra generated by
$\pi(\mathcal{A})$ in $B( H)$. }
 \end{definition}

\begin{corollary}\label{5.4}
Let $\mathcal{A}$ be a separable unital C$^*$-algebra. Suppose
$\mathcal{B} $ is a nuclear separable unital C$^*$-algebra with no
finite dimensional representations. Then $\mathcal{A}
\otimes_{min} \mathcal{B}$ has Property c$^*$-$\Gamma$ and $d(\mathcal{A}
\otimes_{min} \mathcal{B}) \leq 3$, where $\mathcal{A} \otimes_{min}
\mathcal{B}$ is the minimal tensor product of $\mathcal A$ and
$\mathcal B$.
\end{corollary}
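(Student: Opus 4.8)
The plan is to verify that $\mathcal{C}=\mathcal{A}\otimes_{min}\mathcal{B}$ satisfies Condition (G) of Theorem \ref{5.3}; once this is done, Property c$^*$-$\Gamma$ is exactly Condition (G), and the bound $d(\mathcal{C})\le 3$ is immediate from that theorem. So let $\pi:\mathcal{C}\to B(H)$ be a unital $*$-representation with $\mathcal{N}:=\pi(\mathcal{C})''$ a type II$_1$ factor carrying the trace $\tau$, and I must show $\mathcal{N}$ has Property $\Gamma$. First I would split $\pi$ into the two commuting unital $*$-representations $\pi_A(a)=\pi(a\otimes 1)$ and $\pi_B(b)=\pi(1\otimes b)$, set $\mathcal{M}_B=\pi_B(\mathcal{B})''$, and record that $\pi_B(\mathcal{B})\subseteq\pi_A(\mathcal{A})'$ forces $\mathcal{M}_B\subseteq\pi_A(\mathcal{A})'$, while $\mathcal{N}=(\pi_A(\mathcal{A})\cup\pi_B(\mathcal{B}))''$.

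The crux is to identify $\mathcal{M}_B$ precisely. The key observation is that $\mathcal{Z}(\mathcal{M}_B)$ commutes with $\pi_B(\mathcal{B})$ (being the center of $\mathcal{M}_B$) and with $\pi_A(\mathcal{A})$ (since $\mathcal{M}_B\subseteq\pi_A(\mathcal{A})'$), hence commutes with all of $\mathcal{N}$; as $\mathcal{N}$ is a factor this gives $\mathcal{Z}(\mathcal{M}_B)=\mathbb{C}I$, so $\mathcal{M}_B$ is \emph{itself} a factor. Because $\mathcal{B}$ is nuclear, $\mathcal{M}_B$ is injective, and because $\tau$ restricts to a faithful normal tracial state on $\mathcal{M}_B$, it is finite; thus $\mathcal{M}_B$ is either a matrix algebra $M_n(\mathbb{C})$ or the hyperfinite II$_1$ factor $\mathcal{R}$. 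The first alternative would make $\pi_B$ a finite-dimensional representation of $\mathcal{B}$, contradicting the hypothesis, so $\mathcal{M}_B\cong\mathcal{R}$. I expect this step---extracting factoriality from the commutation relations and then invoking the classification of finite injective factors---to be the main conceptual point, and it is exactly here that the two hypotheses on $\mathcal{B}$ (nuclear, and no finite-dimensional representations) are consumed.

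With $\mathcal{M}_B\cong\mathcal{R}$ in hand, the remainder is a standard transfer of central sequences. Since $\mathcal{R}$ has Property $\Gamma$ (Murray--von Neumann) and $\mathcal{B}$ is separable, I would choose a sequence of unitaries $u_m\in\mathcal{M}_B$ with $\tau(u_m)=0$ (the unique trace on the factor $\mathcal{M}_B$ coincides with $\tau|_{\mathcal{M}_B}$) and $\|u_m y-y u_m\|_2\to 0$ for every $y$ in a countable $2$-norm-dense subset of $\mathcal{M}_B$. Each $u_m$ commutes \emph{exactly} with $\pi_A(\mathcal{A})$ and \emph{asymptotically} with $\pi_B(\mathcal{B})$, so for the dense family of products
$$\|u_m\,\pi_A(a_i)\pi_B(b_j)-\pi_A(a_i)\pi_B(b_j)\,u_m\|_2=\bigl\|\pi_A(a_i)\bigl(u_m\pi_B(b_j)-\pi_B(b_j)u_m\bigr)\bigr\|_2\le\|a_i\|\,\bigl\|u_m\pi_B(b_j)-\pi_B(b_j)u_m\bigr\|_2\to 0.$$
Since finite sums of such products are norm-dense in $\pi(\mathcal{C})$ and hence $2$-norm dense in $\mathcal{N}$, a routine approximation (using that the $u_m$ are unitaries, so left and right multiplication are $2$-norm isometries) yields $\|u_m x-x u_m\|_2\to 0$ for all $x\in\mathcal{N}$. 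This exhibits trace-zero asymptotically central unitaries in $\mathcal{N}$, so $\mathcal{N}$ has Property $\Gamma$, which is Condition (G) for $\mathcal{C}$. The assertions that $\mathcal{A}\otimes_{min}\mathcal{B}$ has Property c$^*$-$\Gamma$ and $d(\mathcal{C})\le 3$ then follow from Theorem \ref{5.3}.
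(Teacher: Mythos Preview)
Your proof is correct and follows essentially the same approach as the paper: both identify $\pi(I_{\mathcal A}\otimes\mathcal B)''$ as the hyperfinite II$_1$ factor (using nuclearity for injectivity, the ambient trace for finiteness, commutation with $\pi(\mathcal A\otimes I_{\mathcal B})$ for factoriality, and the no-finite-dimensional-representation hypothesis to rule out matrix algebras), and then transfer its central sequences to the whole factor $\mathcal N$. The paper compresses the final transfer into the phrase ``a standard argument,'' whereas you have spelled out the central-sequence computation explicitly; otherwise the proofs coincide.
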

\begin{proof}
By Theorem \ref{5.3}, we only need to prove the following statement: {\em if $\pi$ is a representation of
$\mathcal{A} \otimes_{min} \mathcal{B}$ on a Hilbert space $H$ and
$\pi(\mathcal{A} \otimes_{min} \mathcal{B})''$ is a type II$_{1}$
factor, then $\pi(\mathcal{A} \otimes_{min} \mathcal{B})''$ has
Property $\Gamma$.}

Assume that $\pi$ is a representation of
$\mathcal{A} \otimes_{min} \mathcal{B}$ on a Hilbert space $H$ and
$\mathcal{M}=\left(\pi(\mathcal{A} \otimes_{min} \mathcal{B})\right)''$ is a type  II$_{1}$ factor.

  Since $I_{\mathcal A}\otimes\mathcal{B}$ is
a nuclear C$^*$-algebra with no finite dimensional representation, we get that
$\pi(I_{\mathcal A}\otimes\mathcal{B})''$ is an infinite dimensional hyperfinite von
Neumann algebra. By the fact that $\pi(I_{\mathcal A}\otimes\mathcal{B})$ commutes with
$\pi(\mathcal{A} \otimes I_{\mathcal{B}})$, it follows that  $\pi(I_{\mathcal A}\otimes\mathcal{B})''$ is a hyperfinite type II$_{1}$
factor.

Note that (i) $\mathcal{M}$ is the type  II$_{1}$ factor generated
by commuting C$^*$-subalgebras $\pi(\mathcal{A} \otimes I_{\mathcal{B}})$ and
$\pi(I_{\mathcal A}\otimes\mathcal{B})$, and (ii) $\pi(I_{\mathcal A}\otimes\mathcal{B})''$ is a hyperfinite type II$_{1}$
factor. A standard argument shows  that $\mathcal{M}$ has Property
$\Gamma$ and the proof of the   corollary is complete.
\end{proof}

Since the Jiang-Su algebra $\mathcal{Z}$ (see definition in \cite{JS1}) is nuclear, simple and infinite dimensional, we obtain the next corollary directly.

\begin{corollary}\label{5.5}
If a separable, non-nuclear, unital C$^*$-algebra $\mathcal{A}$ is $\mathcal{Z}$-stable, then $d(\mathcal{A}) = 3$.
\end{corollary}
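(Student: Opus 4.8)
The plan is to realize $\mathcal{A}$ as a minimal tensor product to which Corollary \ref{5.4} applies directly, and then to pin down the exact value using the non-nuclearity hypothesis. The key observation is that the Jiang--Su algebra $\mathcal{Z}$ is exactly the type of tensor factor $\mathcal{B}$ permitted in Corollary \ref{5.4}.

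First I would record the relevant properties of $\mathcal{Z}$ (see \cite{JS1}): it is a nuclear, simple, infinite-dimensional, unital C$^*$-algebra. The only point that needs a word is that $\mathcal{Z}$ has no finite dimensional $*$-representations. Indeed, if $\pi\colon \mathcal{Z} \to M_{k}(\mathbb{C})$ were a nonzero $*$-representation, its kernel would be a proper two-sided ideal of $\mathcal{Z}$, hence zero by simplicity; then $\pi$ would embed $\mathcal{Z}$ into $M_{k}(\mathbb{C})$, contradicting that $\mathcal{Z}$ is infinite dimensional. Thus $\mathcal{Z}$ meets the hypotheses imposed on $\mathcal{B}$ in Corollary \ref{5.4}.

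Next, since $\mathcal{A}$ is $\mathcal{Z}$-stable, there is an isomorphism $\mathcal{A} \cong \mathcal{A} \otimes \mathcal{Z}$; and because $\mathcal{Z}$ is nuclear, the minimal and maximal C$^*$-tensor norms coincide, so $\mathcal{A} \otimes \mathcal{Z} = \mathcal{A} \otimes_{min} \mathcal{Z}$. Applying Corollary \ref{5.4} with $\mathcal{B} = \mathcal{Z}$ yields $d(\mathcal{A} \otimes_{min} \mathcal{Z}) \leq 3$, and transporting this across the isomorphism gives $d(\mathcal{A}) \leq 3$. For the reverse estimate I would invoke Pisier's characterization from \cite{Pi3}: a unital C$^*$-algebra has similarity degree at most $2$ if and only if it is nuclear. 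As $\mathcal{A}$ is non-nuclear, $d(\mathcal{A}) > 2$; since the similarity degree is an integer and $d(\mathcal{A}) \leq 3$, we conclude $d(\mathcal{A}) = 3$.

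There is essentially no hard analytic step here: the entire substance is already packaged in Corollary \ref{5.4} (and, behind it, Theorem \ref{mainthm2}). The only things to verify with care are the tensorial identification $\mathcal{A} \cong \mathcal{A} \otimes_{min} \mathcal{Z}$ and the elementary fact that $\mathcal{Z}$ has no finite dimensional representations. The main \emph{obstacle}, such as it is, is purely bookkeeping: making sure that $\mathcal{Z}$ genuinely satisfies every hypothesis of Corollary \ref{5.4}, so that no separability, unitality, or nuclearity condition is lost when one passes through the $\mathcal{Z}$-stability isomorphism.
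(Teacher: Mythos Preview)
Your proposal is correct and follows essentially the same approach as the paper: the paper simply notes that $\mathcal{Z}$ is nuclear, simple and infinite dimensional, and then invokes Corollary~\ref{5.4} directly. Your write-up merely spells out the details the paper leaves implicit (no finite-dimensional representations of $\mathcal{Z}$, the identification $\mathcal{A}\cong\mathcal{A}\otimes_{min}\mathcal{Z}$, and the use of \cite{Pi3} to upgrade $\leq 3$ to $=3$).
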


\begin{example}\label{example}
Let $F_2$ be a non-abelian free group on two generators $a, b$ and $C^*_r(F_2)$ be the reduced C$^*$-algebra of free group factor on two standard generators $\lambda(a)$, $\lambda(b)$. Let $Aut(C^*_r(F_2))$ be the automorphism group of $C^*_r(F_2)$.
Let $\theta$ be an irrational number.
Let $\Bbb  Z$ be the  group of integers and $g $ be a generator of $\Bbb Z$.
Let $$\alpha: \ \Bbb Z\rightarrow Aut(C^*_r(F_2))$$ be a group homomorphism from $\Bbb Z$ to $Aut(C^*_r(F_2))$ defined by the following action:
$$
\alpha(g)(\lambda(a))=e^{  {2\pi i}\cdot \theta} \lambda(a)\qquad \text{and} \qquad \alpha(g)(\lambda(b))=e^{  {2\pi i}\cdot \theta}  \lambda(b) .
$$
Let $C^*_r(F_2) \rtimes_\alpha \Bbb Z$ be the reduced crossed product of $C^*_r(F_2)$ by an action $\alpha$ of   $\Bbb Z$. Then $C^*_r(F_2) \rtimes_\alpha \Bbb Z$ has Property c$^*$-$\Gamma$ (see \cite{HS}) and $d(C^*_r(F_2) \rtimes_\alpha \Bbb Z)=3.$
\end{example}

\end{document}